\newcommand{\N}{\mathbb{N}}
\newcommand{\Z}{\mathbb{Z}}
\newcommand{\R}{\mathbb{R}}
\newcommand{\C}{\mathbb{C}}
\newcommand{\E}{\mathbb{E}}
\newcommand{\lam}{\lambda}
\newcommand{\supp}{\text{supp}}
\newcommand{\Tr}{\text{\rm Tr}}
\newcommand{\diag}{{\rm{diag}}}
\newtheorem{info}{}
\newtheorem{theorem}[info]{Theorem}
\newtheorem{corr}[info]{Corollary}
\newtheorem{lem}[info]{Lemma}
\newtheorem{prop}[info]{Proposition}
\newtheorem{remark}[info]{Remark}
\numberwithin{info}{section}
\numberwithin{equation}{section}
\renewcommand{\[}{\begin{equation}}
	\renewcommand{\]}{\end{equation}}
\g@addto@macro\normalsize{%
	\setlength\abovedisplayskip{5pt}
	\setlength\belowdisplayskip{5pt}
	\setlength\abovedisplayshortskip{4pt}
	\setlength\belowdisplayshortskip{4pt}}
\renewcommand{\cal}{\mathcal}
\newcommand{\Lam}{\Lambda}
\newcommand{\sq}{\sqrt}
\newcommand{\sign}{\Sign}
\renewcommand{\sq}{\sqrt}
\newcommand{\To}{\Rightarrow}
\renewcommand{\Im}{\mathrm{Im}}
\renewcommand{\Re}{\mathrm{Re}}
\renewcommand{\Re}{\mathrm{Re}}
\renewcommand{\H}{\mathbb{H}}
\newcommand{\pf}{\mathrm{pf}}
\newcommand{\Sign}{\mathrm{Sign}}
\renewcommand{\k}{\kappa}
\newcommand{\pv}{\mathrm{p.v}}
\newcommand{\Ai}{\mathrm{Ai}}
\newcommand{\arcos}{\mathrm{arccos}}
\newcommand{\arcosh}{\mathrm{arcosh}}
\DeclarePairedDelimiter{\ceil}{\lceil}{\rceil}
\newcommand{\linSpan}{\mathrm{span}}
\begin{document}
	\title{Gaussian Multiplicative Chaos for Gaussian Orthogonal and Symplectic Ensembles}
	
	\author{
		Pax Kivimae\thanks{Department of Mathematics, Northwestern University. Email: kivimae@math.northwestern.edu}
	}
	
	\maketitle
	
	\begin{abstract}
		We study the characteristic polynomials of both the Gaussian Orthogonal and Symplectic Ensembles. We show that for both ensembles, powers of the absolute value of the characteristic polynomials converge in law to Gaussian multiplicative chaos measures after normalization for sufficiently small real powers. The main tool is a new asymptotic relation between the fractional moments of the absolute characteristic polynomials of Gaussian Orthogonal, Unitary, and Symplectic Ensembles.
	\end{abstract}
	
	\section{Introduction}
	
	The Gaussian Orthogonal/Unitary/Symplectic Ensembles are probability measures on the space of $N \times N$ Symmetric/Hermitian/Quaternionic-Hermitian matrices with density
	\begin{equation}\label{eq:defEnsemble}
		\P_\beta(A)=\frac{1}{Z_{N,\beta}}\exp(-N\beta \Tr(A^2)),
	\end{equation}
	where here $\beta=1/2/4$, respectively, and $Z_{N,\beta}$ is a normalization constant. We will write $A_{N,\beta}$ for a matrix sampled from the measure \eqref{eq:defEnsemble}.
	
	The focus of this paper will be the study of the (absolute) characteristic polynomial of matrices sampled from these ensembles in the large $N$ limit. Our understanding of the behavior of such random characteristic polynomials for these (and other) matrix ensembles has seen much progress in recent years (see, for example \cite{GUEGMC} and the references therein). Much of this progress has been motivated by conjectures on these processes due to Fyodorov, Hiary, Keating, and Simm in \cite{fyodorovdistribution,hiary,fyodorovfreezing}, who predict a detailed picture of the extremal values of these processes by relating these characteristic polynomials to objects from the theory of log-correlated fields. Key to this picture is the foundational work of Fyodorov, Khoruzhenko, and Simm \cite{FyoGUE}, which shows that for $\beta=2$, the normalized logarithm of the characteristic polynomial converges (in a suitable sense) to a log-correlated Gaussian random field, expanding the picture derived by Hughes, Keating, and O'Connell \cite{GFFCircular} in the Unitary case. More explicitly, in \cite{FyoGUE} they construct a centered Gaussian field $X$ on $(-1,1)$ with covariance kernel
	\begin{equation}
		\mathbb E X(x) X(y) = -\frac{1}{2} \log |2(x-y)|.
	\end{equation}
	Due to the divergence of this covariance kernel on the diagonal, $X$ does not exist as a random function, but instead, they show that it may be constructed as a random variable valued in a suitable space of distributions. They then proceed to show that the normalized logarithm of $|\det(A_{N,2}-xI)|$ converges in law to $X$ with respect to the topology given by a suitable Sobolev norm.
	
	A key tool used to understand the geometric properties of log–correlated fields (such as $X$) is their associated (GMC) Gaussian multiplicative chaos measures, introduced by Kahane \cite{kahane}, which are roughly given by regularizing the exponential of the field. For more background on log-correlated fields and GMC measures, the reader is invited to consult \cite{GMCintro}.
	
	In view of the results of \cite{FyoGUE, GFFCircular}, one thus expects that the asymptotic behavior of the characteristic polynomials of these ensembles and their powers should be described by a family of GMC measures, and indeed it is from this perspective that \cite{fyodorovdistribution,fyodorovfreezing} were able to formulate their conjectures. Much progress has been achieved in establishing this picture rigorously. In particular, for $\beta=2$, Berestycki, Webb, and Wong \cite{GUEGMC} establish that small enough powers of the absolute value of the characteristic polynomial can be described in the large $N$ limit by a Gaussian multiplicative chaos (GMC) measure. Explicitly, they construct a family of random measures $\mu_{\alpha}$, for $\alpha\in (0,2)$, which may be heuristically written as
	\[\label{eq:GMCdef}\mu_{\alpha}(dx)=\exp\left(\alpha X(x) - \frac{\alpha^2}{2}\E[X(x)^2]\right)dx.\]
	They then show that when viewed as random measures on $(-1,1)$, the sequence
	\[\frac{|\det(A_{N,2}-xI)|^{\alpha}}{\E[|\det(A_{N,2}-xI)|^{\alpha}]}dx\]
	converges in law to $\mu_{\alpha}$ with respect to the topology of weak convergence of measures. Our main result will be an analog of this result for the Gaussian Orthogonal and Symplectic ensembles (i.e., $\beta=1,4$).
	
	\begin{theorem}
		\label{GMCtheorem}
		For $\alpha\in (0,1/\sq{2})$, we have as $N\to \infty$ that
		\[\frac{|\det(A_{2N,1}-xI)|^{\alpha}}{\E[|\det(A_{2N,1}-xI)|^{\alpha}]}dx\To \mu_{\sq{2}\alpha}(dx)\]
		in law with respect to the topology of weak convergence of measures. 
		
		\noindent	Similarly, for $\alpha\in (0,\sq{2})$ we have as $N\to \infty$ that
		\[\frac{|\det(A_{N,4}-xI)|^{\alpha}}{\E[|\det(A_{N,4}-xI)|^{\alpha}]}dx\To \mu_{\alpha/\sq{2}}(dx)\]
		in law with respect to the topology of weak convergence of measures.
	\end{theorem}
	
	We note that some analogous results are known outside of the case of Gaussian ensembles. In particular, Webb \cite{WebbL2} showed that suitably small powers (more precisely powers in the $L^2$-regime) of the characteristic polynomial of a Haar distributed random unitary matrix converges in law to a certain GMC measure on the unit circle, after a suitable normalization. This result was later extended by Nikula, Saksman, and Webb \cite{WebbL1} to the larger $L^1$-regime, which is believed to be optimal. Additionally, Forkel and Keating \cite{ForkelKeating} have shown a similar result for the characteristic polynomials of matrices sampled from the Haar measures on both the orthogonal and symplectic groups.
	
	In all of these cases though, the results rely on precise asymptotics for fractional moments of characteristic polynomials (for example, see Sections 3 and 4 of \cite{GUEGMC}). Depending on the case, this becomes equivalent to obtaining asymptotics for certain Toeplitz, Hankel, or Toeplitz+Hankel determinants associated with certain measures with Fischer-Hartwig singularities (see \cite{RHconjectures} for further background). These asymptotics have been computed in many cases through the use of Riemann-Hilbert methods (see, for example, \cite{Krav}). These methods though, are specific to the case of $\beta=2$, with no clear generalization to the $\beta=1,4$ cases.
	
	Here we take a different approach to prove Theorem \ref{GMCtheorem}. Instead of calculating the fractional moments directly (which appear to still be inaccessible to current methods), we instead relate fractional moments of $|\det(A_{2N,1}-xI)|$ and $|\det(A_{N,4}-xI)|$ to those of $|\det(A_{2N,2}-xI)|$. For notational convenience in stating our result, here and elsewhere, we will denote
	\[D_{N,\beta}(x)=\det(A_{N,\beta}-xI).\]
	Our main tool in proving Theorem \ref{GMCtheorem} is then the following asymptotic relation, which may also be of independent interest.
	
	\begin{theorem}
		\label{main-theorem}
		Let $m\ge 1$, and for each $1\le i\le m$, let us take $\alpha_i>0$ and $\lam_i\in (-1,1)$ such that $-1<\lam_m<\cdots <\lam_1<1$. Let $\cal{W}$ be a smooth, compactly-supported function that coincides with a polynomial on a neighborhood of $[-1,1]$. Then we have that
		\begin{align}\label{identity}
			\E\Bigg[e^{\Tr(\cal{W}(A_{2N,1}))}&\prod_{i=1}^{m}|D_{2N,1}(\lam_i)|^{\alpha_i}\ \Bigg]
			\times \E\Bigg[e^{\Tr(2\cal{W}(A_{N,4}))}\prod_{i=1}^{m}|D_{N,4}(\lam_i)|^{2\alpha_i}\Bigg] \\
			&= \E\Bigg[e^{\Tr(2\cal{W}(A_{2N,2}))}\prod_{i=1}^{m}|D_{2N,2}(\lam_i)|^{2\alpha_i}\Bigg](1+O(N^{-1/6})).
		\end{align}
		Moreover the error term is uniform over compact subsets of $\{\lam_i\}_{i=1}^{m}\in \{(\lam_1,\cdots \lam_m)\in (-1,1)^m:\lam_1>\cdots >\lam_m\}$.
	\end{theorem}
	
	\begin{remark}
		Theorem \ref{main-theorem} may be compared to the following relation given by Baik and Rains \cite{baik} (also noted in \cite{RHconjectures}). Denote by $O_N,U_N$, and $S_N$, a random matrix sampled from the Haar measure on $\mathrm{SO(N)}$, $\mathrm{U(N)}$, and $\mathrm{Sp(N)}$, respectively. Then for a large class of functions $g$  (including integrable functions), we have the following relation
		\[\E [\det(g(O_{2N+2}))]\E [\det(g(S_{2N}))]=\E[ |\det(g(U_{2N}))|^2].\]
	\end{remark}
	
	For other occurrences of GMC measures in random matrix theory, see \cite{GMConDisk, ChiCircle}. We also mention the work of Lambert, Ostrovsky, and Simm \cite{Lambert}, where they establish the convergence of a regularized version of $\log(|D_{N,\beta}(x)|)$ to a log-correlated field for arbitrary $\beta>0$. Additionally, we also mention the recent work of Bourgade, Mody, and Pain \cite{bour}, where they establish normalized fluctuations of $\log(|D_{N,\beta}(x)|)$ at a finite collection of points. In the work of Claeys, Fahs, Lambert, and Webb \cite{clae}, a similar convergence to a GMC measure is shown for the exponential of the eigenvalue counting function, from which they derive optimal bounds on eigenvalue fluctuations. Lastly, for some beautiful relationships between log-correlated fields, random matrices, and the behavior of special functions in number theory, one may consult \cite{KatzSarnakBook,fyodorovfreezing,fyodorovdistribution}, and the references therein.
	
	Lastly, we also remark that the restriction to the even case for $\beta=1$ in Theorem \ref{GMCtheorem} appears to be a technical artifact of our method of proof, which relies crucially on Theorem \ref{main-theorem}. Similarly, we note that the parameter ranges for $\alpha$ in Theorem \ref{GMCtheorem} amount to only a strict subset of the $L^2$-phase, which occurs for $\alpha<\sq{\beta}$. This restriction as well is a limitation of our method and is due to having to rely on the convergence of the $\beta=2$ at larger powers, as is explained below. 
	
	\subsection{Construction of GMC and sketch of the proof of Theorem \ref{GMCtheorem} from Theorem \ref{main-theorem}} \label{outline1}
	We now outline our method to obtain Theorem \ref{GMCtheorem} from Theorem \ref{main-theorem}. Details are given in Section \ref{GMC section}. We begin by recalling the basics of the construction of the GMC measure \eqref{eq:GMCdef}. We first recall a sequence of Gaussian processes on $(-1,1)$, denoted $X_M$, such that a.s. $X_M\To X$ as $M\to \infty $ in the distributional sense. This decomposition comes from the identity (see, for example, Appendix C of \cite{Log-Chebyshev}) that for $x,y\in (-1,1)$, we have that
	\[-\frac{1}{2}\log(2|x-y|)=\sum_{k=1}^{\infty}\frac{1}{k}T_k(x)T_k(y),\]
	where $T_k$ is $k$-th Chebyshev polynomial of the first kind defined by the relation $T_k(\cos(\theta))=\cos(k\theta)$ for $\theta\in (0,2\pi]$. The polynomial $T_k$ is also the $k$-th orthogonal polynomial with respect to the measure $\mu_{as}$, where $\mu_{as}(x)=\frac{2}{\pi}(1-x^2)^{-1/2}I(|x|<1)$ denotes the (shifted) arcsine distribution. In particular, from this, we have the following (formal) distributional identity
	\[X(x)=\sum_{k=1}^{\infty}\frac{Z_k}{\sq{k}}T_k(x),\]
	where $(Z_k)_k$ is a sequence of i.i.d standard Gaussians random variables. We define \[X_M(x)=\sum_{k=1}^{M}\frac{Z_k}{\sq{k}}T_k(x),\]
	which is the formal projection of $X$ onto polynomials of degree-$M$ in $L^2(\mu_{as})$.
	
	If we define $\mu_{M,\alpha}(dx)=\exp(\alpha X_M(x)-\frac{\alpha^2}{2}\E [X_M(x)^2])dx$, it is shown in \cite{GUEGMC} that for $\alpha\in (0,2)$, the measures $\mu_{M,\alpha}$ converge weakly in law to a nontrivial limiting measure, which will be our $\mu_{\alpha}$. In addition, for $\alpha\in (0,\sq{2})$, it is shown that $\mu_{M,\alpha}$ converges to $\mu_{\alpha}$ in the $L^2$-norm as well.
	
	We now define
	\[\mu_{N,\beta,\alpha}(dx)=\frac{|D_{N,\beta}(x)|^{\alpha}}{\E[|D_{N,\beta}(x)|^{\alpha}]}dx.\]
	In addition, we introduce processes $X_{N,M,\beta}$, similar to $X_M$, but relating instead to $\log(|D_{N,\beta}(x)|)$. As the eigenvalues of $A_{N,\beta}$ may live outside of $(-1,1)$, we first must smoothly extend $T_k$ to a compactly-supported function. In particular, let us fix some $\epsilon>0$ and choose, for each $k$, a function $\tilde{T}_k(x)$ which is smooth and of compact support, and such that $\tilde{T}_k(x)=T_k(x)$ on $(-1-\epsilon,1+\epsilon)$. We define
	\[X_{N,M,\beta}(x)=-\sum_{k=1}^{M}\frac{2}{k}\Tr(\tilde{T}_k(A_{N,\beta}))T_k(x),\;\;\;\;\mu_{N,M,\beta,\alpha}(dx)=\frac{e^{\alpha X_{N,M,\beta}(x)}}{\E e^{\alpha X_{N,M,\beta}(x)}}dx.\]
	
	For a measure $\mu$ on $(-1,1)$, and a function $f$, let us denote $\mu(f):=\int_{-1}^1 f(x)\mu(dx)$. We note that by a standard argument (see Section 4 of \cite{kallenberg}), there is no loss of generality to replace the weak convergence in Theorem \ref{GMCtheorem} with vague convergence. That is, it suffices to show for any fixed continuous compactly-supported function $\varphi:(-1,1)\to
	[0,\infty)$, and for $\alpha<\sq{\beta/2}$, that we have that
	\[\mu_{N,\beta,\alpha}(\varphi)\To \mu_{\alpha\sq{2/\beta}}(\varphi)\label{eqn-to-show}\]
	in law as $N\to \infty$ for $\beta=1,4$ (where $N$ only runs over even numbers if $\beta=1$).
	
	For $\beta=1,2,4$ and $\alpha>0$ and $M$ fixed we have that \[\mu_{N,M,\beta,\alpha}(\varphi)\To \mu_{M,\alpha \sq{2/\beta}}(\varphi)\]
	in law as $N\to \infty$. This result is a consequence of the classical asymptotics for linear statistics given in \cite{Johansson}. In particular, this follows immediately by repeating the proof of the $\beta=2$ case given in Section 6 of \cite{GUEGMC}, as the results of \cite{Johansson} hold for all $\beta=1,2,4$. Noting the inequality,
	\[\frac{1}{4}\E(\mu_{N,\beta,\alpha}(\varphi)-\mu_{\alpha\sq{2/\beta}}(\varphi))^2\le\E(\mu_{N,\beta,\alpha}(\varphi)-\mu_{N,M,\beta,\alpha}(\varphi))^2+\]\[\E(\mu_{N,M,\beta,\alpha}(\varphi)-\mu_{M,\alpha\sq{2/\beta}}(\varphi))^2+\E(\mu_{M,\alpha\sq{2/\beta}}(\varphi)-\mu_{\alpha\sq{2/\beta}}(\varphi))^2,\]
	we see that when taking the limit $N\to \infty$ and then the limit $M\to \infty$, the second and third terms on the right-hand side vanish, so to demonstrate (\ref{eqn-to-show}) it suffices to show that for $\beta=1,4$, we have that
	\[\lim_{M\to\infty}\lim_{N\to\infty}\E(\mu_{N,\beta,\alpha}(\varphi)-\mu_{N,M,\beta,\alpha}(\varphi))^2=0, \label{MN-limit}\]
	where again $N$ only ranges over even numbers if $\beta=1$.
	
	One should note that in the case of $\beta=2$, the left-hand side of (\ref{MN-limit}) is shown to vanish in \cite{GUEGMC}. The key step will then be to define an additional pair of measures
	\[\nu_{N,\alpha}(dx)=\frac{|D_{2N,1}(x)|^{\alpha}|D_{N,4}(x)|^{2\alpha}}{\E \big[|D_{2N,1}(x)|^{\alpha}\big]\E \big[|D_{N,4}(x)|^{2\alpha}\big]}dx,\;\;\;\nu_{N,M,\alpha}(dx)=\frac{e^{\alpha X_{2N,M,1}(x)}e^{2\alpha X_{N,M,4}(x)}}{\E e^{\alpha X_{2N,M,1}(x)}\E e^{2\alpha X_{N,M,4}(x)}}dx,\]
	where $A_{2N,1}$ and $A_{N,4}$ are sampled independently. We observe that $\E_{A_{2N,1}}[\nu_{N,\alpha}(\varphi)]=\mu_{N,4,2\alpha}(\varphi)$ an $\E_{A_{N,4}}[\nu_{N,\alpha}(\varphi)]=\mu_{2N,1,\alpha}(\varphi)$, where $\E_{A_{2N,1}}$ and $\E_{A_{N,4}}$ denote expectation with respect to $A_{2N,1}$ and $A_{N,4}$, respectively. A similar result holds for $\nu_{N,M,\alpha}$. A key observation is that Jensen's inequality implies that
	\[\E(\mu_{N,4,\alpha}(\varphi)-\mu_{N,M,4,\alpha}(\varphi))^2=\E(\E_{A_{2N,1}}(\nu_{N,\alpha}(\varphi)-\nu_{N,M,\alpha}(\varphi)))^2\le \E(\nu_{N,\alpha}(\varphi)-\nu_{N,M,\alpha}(\varphi))^2.\]
	With a similar observation at $\beta=1$, this implies that to show (\ref{MN-limit}), it suffices to show that for $\alpha<1/\sq{2}$
	\[\lim_{M\to\infty}\lim_{N\to\infty}\E(\nu_{N,\alpha}(\varphi)-\nu_{N,M,\alpha}(\varphi))^2=0.\label{Jensen-Limit}\]
	In addition, it will suffice to show that
	\[\lim_{M\to\infty}\lim_{N\to\infty}\E(\nu_{N,\alpha}(\varphi)-\nu_{N,M,\alpha}(\varphi))^2=
	\lim_{M\to\infty}\lim_{N\to\infty}\E(\mu_{2N,2,2\alpha}(\varphi)-\mu_{2N,M,2,2\alpha}(\varphi))^2\label{GMC-intro-final},\]
	as the latter is shown to vanish in Proposition 2.9 of \cite{GUEGMC}. Expanding both sides of (\ref{GMC-intro-final}), we will see that all the integrands coincide pointwise as $N\to\infty$ by Theorem \ref{main-theorem}, reducing the remaining work to showing that the error term is suitably uniform. Details of this are given in Section \ref{GMC section}.

	\subsection{Outline of the proof of Theorem \ref{main-theorem}}\label{outline2}
	We now outline the method of proof of Theorem \ref{main-theorem}. Let us choose some $m\ge 1$ and $(\cal{W},\{\lam_i\}_i,\{\alpha_i\}_i)$ satisfying the conditions of Theorem \ref{main-theorem}. We will denote the polynomial coinciding with $\cal{W}$ on $(-1,1)$ as $\cal{W}_0$, and denote the difference as $\cal{E}=\cal{W}-\cal{W}_0$. We will omit the data $(N,\cal{W},\{\lam_i\}_i,\{\alpha_i\}_i)$ from the notation when such a choice is clear from the context. We will reduce the proof of Theorem \ref{main-theorem} to the computation of the asymptotics of the determinant of an $N$-by-$N$ matrix $\Delta_N:=\Delta_N(\cal{W},\{\lam_i\}_i,\{\alpha_i\}_i)$, defined in (\ref{delta_def}). More specifically, we show that
	\[\frac{\E[e^{\Tr(\cal{W}(A_{2N,1}))}\prod_{i=1}^{m}|D_{2N,1}(\lam_i)|^{\alpha_i}] \E[e^{\Tr(2\cal{W}(A_{N,4}))}\prod_{i=1}^{m}|D_{N,4}(\lam_i)|^{2\alpha_i}]}{\E[e^{\Tr(2\cal{W}(A_{2N,2}))}\prod_{i=1}^{m}|D_{2N,2}(\lam_i)|^{2\alpha_i}]}=\sq{\det(\Delta_N)}\label{Ratio-Form}.\]
	This reduces the proof of Theorem \ref{main-theorem} to the computation of $\det(\Delta_N)$.
	
	This representation was essentially observed as Remark 2.4 of \cite{Stoj} and will essentially follow (up to normalization constants) from the representations presented in \cite{beke}. This matrix $\Delta_N$ originally appeared in \cite{WidomRelation}, and was used to express the correlation functions of Orthogonal and Symplectic ensembles, and is shown there to be quite sparse. In particular in the case $\cal{W}=0$, we have that $\Delta_N$ coincides with the identity outside of an $(m+1)$-by-$(m+1)$ block. In the case of $\cal{W}\neq 0$ we will show that $\Delta_N$ is the identity outside a $(m+\max(\deg(\cal{W}_0)-1,1))$-by-$(m+\max(\deg(\cal{W}_0)-1,1))$ block, up to an error term which we show is exponentially small in Section \ref{Exponential Error Section}.
	
	In either case, the entries of this distinguished block may be expressed explicitly in terms of the orthogonal polynomials with respect to the measure \[w_N(x):=w(x;N,\cal{W},\{\lam_i\}_i,\{\alpha_i\}_i)=e^{-2Nx^2}e^{2\cal{W}(x)}\prod_{i=1}^{m}|x-\lam_i|^{2\alpha_i}.\label{w-def}\]
	The asymptotics of the orthogonal polynomials with respect to this measure is known in the case of $\cal{W}=0$ by \cite{Krav}, and $\cal{W}\neq 0$ by \cite{GUEGMC}, and are recalled in detail in Section \ref{Asymptotics-Section}. With this information, we are able to compute the asymptotics of this block matrix to sufficient order to compute the first-order behavior of $\det(\Delta_N)$. This is done in Section \ref{Integral Section}.
	
	We remark that computations of $\Delta_N$ for other measures form a key step in the proof of universality for the correlations functions of various symmetric and symplectic ensembles \cite{Universality,UniversalityLaguerre,UniversalityEdge,Stoj}.
	
	\subsection{Structure of the paper}
	
	In Section \ref{preliminary-section}, we prove (\ref{Ratio-Form}) and reduce the asymptotic computation of $\det(\Delta_N)$ to Propositions \ref{Exponential-Error-Prop} and \ref{W-Integral-Proposition}. In addition, we provide a modification of Theorem \ref{main-theorem}, Proposition \ref{Main-prop-Merging-2}, which applies in the case of (subcritical) merging singularities and reduces its proof to Proposition \ref{merging-W-Integral-Proposition}. In Section \ref{GMC section} we complete the proof of Theorem \ref{GMCtheorem} which by the argument given above is reduced to demonstrating (\ref{GMC-intro-final}). The bulk of this statement will follow from Theorem \ref{main-theorem} and Proposition \ref{Main-prop-Merging-2}. In Section \ref{Asymptotics-Section}, we provide asymptotics for the orthogonal polynomials with respect to $w_N$, while in Section \ref{Exponential Error Section} we employ these asymptotics to prove Proposition \ref{Exponential-Error-Prop}, which effectively allows us to neglect all but finitely many entries of $\det(\Delta_N)$. In Section \ref{Integral Section}, we prove Proposition \ref{W-Integral-Proposition}, which consists of computing the highest order behavior of certain integrals of orthogonal polynomials, by employing classical methods of steepest descent to the asymptotics of Section \ref{Asymptotics-Section}. Furthermore, in Section \ref{merging section}, we will provide a proof of Proposition \ref{merging-W-Integral-Proposition}. The methods of this section will be similar to those of Section \ref{Integral Section}, and indeed the cases of Proposition \ref{W-Integral-Proposition} and Proposition \ref{merging-W-Integral-Proposition} have significant overlap.
	
	\subsection{Acknowledgments} The author was partially supported by the grants NSF DMS-1653552, NSF DMS-1502632, and NSF DMS-2202720 while completing this work. The author would like to thank their advisor Antonio Auffinger for several discussions related to this project and for many comments on earlier versions of this manuscript. In addition, the author would like to thank an anonymous reviewer for finding numerous typos and errors earlier drafts.
	
	\section{Preliminary Results\label{preliminary-section}}
	In this section, we will prove (\ref{Ratio-Form}). In addition, we will analyze the terms of $\Delta_N$ to reduce the computation of $\det(\Delta_N)$ to a sequence of integrals and quantities given in terms of the orthogonal polynomials of $w_N$.
	
	We will assume henceforth that $N>m+\deg(\cal{W}_0)$. We define the unnormalized expected characteristic polynomial as
	\[\Phi_{N,\beta}(\cal{W},\{\lam_i\}_i,\{\alpha_i\}_i)=\frac{1}{N!}\int_{\R^N}|\Delta_N(x)|^{\beta}\prod_{i=1}^{N}(e^{-N\beta x_i^2+\cal{W}(x_i)}\prod_{j=1}^{m}|x_i-\lam_j|^{\alpha_j})dx^N,\]
	where here $\{\lam_i\}_i=(\lam_1,\cdots \lam_m)$, $\{\alpha_i\}_i=(\alpha_1,\cdots \alpha_m)$, and
	$\Delta_N(x)=\prod_{1\le i<j\le N}(x_i-x_j)$
	denotes the Vandermonde determinant. We note that (see Chapter 1 of \cite{LogGas})
	\[\Phi_{N,\beta}(\cal{W},\{\lam_i\}_i,\{\alpha_i\}_i)=Z_{N,\beta}\E e^{\Tr(\cal{W}(A_{N,\beta}))}\prod_{i=1}^{m}|\det(A_{N,\beta}-\lam_i I)|^{\alpha_i}.\]
	We will need the following relation.
	\begin{lem}
		\label{Normalization-Lemma}
		For all $N\ge 1$, we have that $Z_{2N,1}Z_{N,4}=2^{2N}Z_{2N,2}$.
	\end{lem}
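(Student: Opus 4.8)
The plan is to compute each normalization constant $Z_{N,\beta}$ explicitly as a Selberg-type (Mehta) integral and then verify the stated identity by a direct manipulation of Gamma functions. Recall that $Z_{N,\beta} = \int_{\R^N} |\Delta_N(x)|^\beta \prod_{i=1}^N e^{-N\beta x_i^2}\, dx^N$ up to the combinatorial factor $N!$ absorbed in the definition of $\Phi_{N,\beta}$; since the same $N!$ appears on both sides when $\beta=1$ (with $2N$) and $\beta=4$ (with $N$) versus $\beta=2$ (with $2N$), I will track it carefully. The Mehta integral gives, for a Gaussian weight $e^{-a x^2}$,
\[
\int_{\R^n} |\Delta_n(x)|^\beta \prod_{i=1}^n e^{-a x_i^2}\,dx^n = (2a)^{-n/2 - \beta n(n-1)/4}\,(2\pi)^{n/2}\prod_{j=1}^{n}\frac{\Gamma(1+j\beta/2)}{\Gamma(1+\beta/2)},
\]
so the entire content of the lemma is the combinatorial identity obtained by substituting $(n,\beta,a)=(2N,1,2N)$, $(N,4,4N)$, and $(2N,2,4N)$ respectively, together with the factorial normalization. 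First I would write down these three evaluations; the prefactors of the form (power of $a$)$\times$(power of $2\pi$) should be checked to combine into exactly $2^{2N}$ after accounting for the different matrix sizes and the different values of $a$ (note $a=2N$ for $\beta=1$ but $a=4N$ for $\beta=2,4$, which is where the factor $2$ per dimension enters).

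The heart of the matter is the product of Gamma functions. After cancellation of the $a$- and $\pi$-dependent prefactors, the claim reduces to an identity of the schematic form
\[
\left(\prod_{j=1}^{2N}\frac{\Gamma(1+j/2)}{\Gamma(3/2)}\right)\left(\prod_{j=1}^{N}\frac{\Gamma(1+2j)}{\Gamma(3)}\right) \;=\; 2^{c(N)}\left(\prod_{j=1}^{2N}\frac{\Gamma(1+j)}{\Gamma(2)}\right)
\]
for the appropriate power $c(N)$, which I would prove by splitting the first product over even and odd $j$: the even-$j$ terms $\Gamma(1+j)$ with $j=2\ell$ give $\prod_{\ell=1}^N \Gamma(1+2\ell)$, which combines with the $\beta=4$ product, while the odd-$j$ terms $\Gamma(1/2 + \text{integer})$ are handled via the duplication formula $\Gamma(1/2+k) = (2k)!\sqrt{\pi}/(4^k k!)$. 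The powers of $2$ (and the stray $\sqrt\pi$ factors) accumulated from the duplication formula must be matched against the $2^{2N}$ on the right and against the prefactor discrepancy from the first paragraph.

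The main obstacle is purely bookkeeping: there is no conceptual difficulty, but one must be scrupulous about (i) the exact form of the Mehta integral normalization one is using, (ii) the $N!$ versus $(2N)!$ factors implicit in passing between $\Phi_{N,\beta}$ and $Z_{N,\beta}$, and (iii) the many powers of $2$ coming from $a$, from $2\pi$, and from the Gamma duplication formula, all of which must conspire to leave exactly $2^{2N}$. An efficient alternative I would consider, to sidestep some of the bookkeeping, is to verify the ratio $Z_{2N,1}Z_{N,4}/(2^{2N}Z_{2N,2})$ is $N$-independent by checking $N=1$ by hand and then confirming that the ratio of consecutive values (in $N$) equals $1$; this telescoping check only involves the incremental Gamma factors $\Gamma(1+N/2)\Gamma(1/2+N)\cdots$ and the incremental powers of $a$, which is a shorter computation, though it still ultimately rests on the duplication formula.
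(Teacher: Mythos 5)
Your proposal is correct and takes essentially the same route as the paper: substitute the explicit Mehta/Selberg evaluation of $Z_{N,\beta}$ (as in \cite{LogGas}) into the ratio $Z_{2N,1}Z_{N,4}/Z_{2N,2}$ and reduce the resulting Gamma-function identity via the Legendre duplication formula, with the remaining powers of $2$ coming from the prefactors. The paper does exactly this computation, so your plan, once the bookkeeping is carried out, reproduces it.
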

	\begin{proof}
		The values of $Z_{N,\beta}$ are known exactly as
		\[Z_{N,\beta}=(2N)^{-\beta\frac{N(N-1)}{4}-N/2}(N!)^{-1}\beta^{-\frac{N}{2}-\beta\frac{N(N-1)}{4}}(2\pi)^{N/2}\prod_{j=1}^{N}\frac{\Gamma(1+j\beta/2)}{\Gamma(1+\beta/2)}.\]
		This
		follows from equation 1.163 of \cite{LogGas}, up to rescaling. In view of this, we may rewrite $Z_{2N,1}Z_{N,4}/Z_{2N,2}$ as
		\[\frac{(4N)^{\frac{-2N(2N-1)}{4}-N}(2N)^{-N(N-1)-N/2}}{(4N)^{-\frac{2N(2N-1)}{2}-N}}\frac{4^{-N/2-N(N-1)}\Gamma(2)^{2N}}{2^{-N-N(2N-1)}\Gamma(3)^{N}}\frac{(2\pi)^{N/2}}{\Gamma(3/2)^{2N}N!}\frac{\prod_{j=1}^{2N}\Gamma(1+j/2)\prod_{j=1}^{N}\Gamma(1+2j)}{\prod_{j=1}^{2N}\Gamma(1+j)}.\label{eqn:ignore-1235}\]
		We observe that
		\[\frac{(4N)^{\frac{-2N(2N-1)}{4}-N}(2N)^{-N(N-1)-N/2}}{(4N)^{-\frac{2N(2N-1)}{2}-N}}=2^{N^2-\frac{N}{2}},\quad \frac{4^{-N/2-N(N-1)}\Gamma(2)^{2N}}{2^{-N-N(2N-1)}\Gamma(3)^{N}}=1.\]
		We may also obtain that
		\[\frac{\prod_{j=1}^{2N}\Gamma(1+j/2)\prod_{j=1}^{N}\Gamma(1+2j)}{\prod_{j=1}^{2N}\Gamma(1+j)}=\prod_{j=1}^{N}\frac{\Gamma(j+1/2)\Gamma(1+j)}{\Gamma(2j)}=\]\[N!\prod_{j=1}^{N}\frac{\Gamma(j+1/2)\Gamma(j)}{\Gamma(2j)}=N!\pi^{N/2}\prod_{j=1}^{N}2^{1-2j}=N!\pi^{N/2}2^{-N^2},\]
		where in the second to last equality we have employed the Legendre duplication formula (see 6.1.18 of \cite{AS}) \[\Gamma(z)\Gamma(z+1/2)/\Gamma(2z)=2^{1-2z}\pi.\]
		In particular we may rewrite (\ref{eqn:ignore-1235}) as
		\[2^{-N/2}\pi^{N/2}\frac{(2\pi)^{N/2}}{\Gamma(3/2)^{2N}}=\frac{\pi^{N}}{\Gamma(3/2)^{2N}}=2^{2N},\]
		where in the last step we used that $\Gamma(3/2)=\frac{1}{2}\sq{\pi}$. As (\ref{eqn:ignore-1235}) coincides with $Z_{2N,1}Z_{N,4}/Z_{2N,2}$ we obtain the desired result.
	\end{proof}
	From this, we see that
	\[\frac{\E[e^{\Tr(\cal{W}(A_{2N,1}))}\prod_{i=1}^{m}|D_{2N,1}(\lam_i)|^{\alpha_i}] \E[e^{\Tr(2\cal{W}(A_{N,4}))}\prod_{i=1}^{m}|D_{N,4}(\lam_i)|^{2\alpha_i}]}{\E[e^{\Tr(2\cal{W}(A_{2N,2}))}\prod_{i=1}^{m}|D_{2N,2}(\lam_i)|^{2\alpha_i}]}=\]
	\[2^{2N}\frac{\Phi_{2N,1}(\cal{W},\{\lam_i\}_i,\{\alpha_i\}_i)\Phi_{N,4}(2\cal{W},\{\lam_i\}_i,\{2\alpha_i\}_i)}{\Phi_{2N,2}(2\cal{W},\{\lam_i\}_i,\{2\alpha_i\}_i)}.\label{int-21}\]
	We now recall a classical representation of these integrals. For simplicity, for the remainder of the section, we will fix $(\cal{W},\{\lam_i\}_i,\{\alpha_i\}_i)$ and denote, as above, \[w_N(x)=w(x;N,\cal{W},\{\lam_i\}_i,\{\alpha_i\}_i).\]
	We define a family of $N$-by-$N$ matrices defined for $1\le i,j\le N$ as
	\[[M_N^2(\cal{W},\{\lam_i\}_i,\{\alpha_i\}_i)]_{ij}=\int x^{i+j-2}w_N(x)dx,\]
	\[[M^4_N(\cal{W},\{\lam_i\}_i,\{\alpha_i\}_i)]_{ij}=\frac{1}{2}\int ((x^{i-1})'(x^{j-1})-(x^{i-1})(x^{j-1})')w_N(x)dx,\]
	\[[M^1_{N}(\cal{W},\{\lam_i\}_i,\{\alpha_i\}_i)]_{ij}=\frac{1}{2}\int \int x^{j-1}y^{i-1}\;\sign(x-y)\sq{w_N(x)w_N(y)}dxdy.\]
	
	We note that the latter two matrices are antisymmetric, while the first is symmetric. The relationship between these matrices and the various $\Phi_{N,\beta}$ is classical.
	\begin{lem}
		\label{Pfaffian-Lemma}
		We have for all $N\ge 1$, that
		\[\det(M^2_N(\cal{W},\{\lam_i\}_i,\{\alpha_i\}_i))=\Phi_{2,N}(2\cal{W},\{\lam_i\}_i,\{2\alpha_i\}_i),\]\[\pf(M^1_{2N}(\cal{W},\{\lam_i\}_i,\{\alpha_i\}_i))=2^{-N}\Phi_{1,2N}(\cal{W},\{\lam_i\}_i,\{\alpha_i\}_i),\]\[\pf(M^4_{2N}(\cal{W},\{\lam_i\}_i,\{\alpha_i\}_i))=2^{-N}\Phi_{4,N}(2\cal{W},\{\lam_i\}_i,\{2\alpha_i\}_i),\]
		where here $\pf$ denotes the Pfaffian.
	\end{lem}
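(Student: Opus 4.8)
All three identities are classical: for $\beta=2$ it is Andr\'eief's (Heine's) determinant formula, while for $\beta=1$ and $\beta=4$ it is de Bruijn's Pfaffian integration formula, applied to the Vandermonde--type densities defining $\Phi_{N,\beta}$; versions close to what is needed are recorded in \cite{LogGas}, and in essentially this form in \cite{beke,Stoj,WidomRelation}. The plan is to treat the three values of $\beta$ separately, in each case first rewriting the appropriate power of the Vandermonde $\Delta_N(x)$ as a single (possibly confluent) determinant, then integrating term by term via the relevant formula, and finally checking that the weight so produced agrees with $w(x;\cdot)$ (resp.\ $\sqrt{w(x;\cdot)}$) built into $M^2_N$, $M^1_{2N}$, $M^4_{2N}$ once $\cal{W}$ and the $\alpha_i$ are doubled, and that the elementary prefactors combine to the asserted constant.

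For $\beta=2$: since $\Delta_N(x)=\det(x_i^{j-1})_{1\le i,j\le N}$ we have $|\Delta_N(x)|^2=\det(x_i^{j-1})\det(x_i^{k-1})$, and the product over $i$ appearing in $\Phi_{N,2}$ (with each $\alpha_i$ replaced by $2\alpha_i$) is exactly $\prod_i w(x_i;N)$. Andr\'eief's identity
\begin{equation*}
\frac{1}{N!}\int_{\R^N}\det(f_j(x_i))\,\det(g_k(x_i))\prod_{i=1}^{N}w(x_i)\,dx=\det\!\Big(\int f_i(x)\,g_j(x)\,w(x)\,dx\Big)_{i,j=1}^{N}
\end{equation*}
with $f_j(x)=g_j(x)=x^{j-1}$ then yields $\det\big(\int x^{i+j-2}w(x;N)\,dx\big)=\det(M^2_N)$; the $1/N!$ cancels the one in the definition of $\Phi_{N,2}$, so no extra constant survives.

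For $\beta=1$ the size is $2N$ and the density carries $|\Delta_{2N}(x)|$. As the rest of the integrand is symmetric, I would symmetrize onto the chamber $x_1>\dots>x_{2N}$, where $|\Delta_{2N}(x)|=\Delta_{2N}(x)=\det(x_i^{j-1})$, which contributes a factor $(2N)!$ cancelling the one in $\Phi_{1,2N}$. Setting $\phi_j(x)=x^{j-1}\sqrt{w(x;2N)}$ --- legitimate since $w\ge 0$, and $\sqrt{w(x;2N)}$ is precisely the per-particle weight $e^{-2Nx^2+\cal{W}(x)}\prod_i|x-\lam_i|^{\alpha_i}$ of $\Phi_{1,2N}$ --- de Bruijn's formula expresses $\int_{x_1>\dots>x_{2N}}\det(\phi_j(x_i))\,dx$ as the Pfaffian of the $2N\times 2N$ antisymmetric matrix with $(i,j)$-entry $\int\!\!\int_{x<y}\big(\phi_i(x)\phi_j(y)-\phi_j(x)\phi_i(y)\big)\,dx\,dy$; antisymmetrizing the double integral turns this entry into a constant times $\int\!\!\int x^{i-1}y^{j-1}\sgn(x-y)\sqrt{w(x;2N)w(y;2N)}\,dx\,dy$, and, matching against the explicit $\tfrac12$ in the definition of $M^1_{2N}$, the identity $\pf(cA)=c^{N}\pf(A)$ for $2N\times 2N$ matrices produces the stated $2^{-N}$. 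The case $\beta=4$ is the analogous computation: one uses the confluent Vandermonde identity $\Delta_N(x)^4=\pm\det\Psi(x)$, where $\Psi(x)$ is obtained from the Vandermonde matrix on the $2N$ points $x_1,x_1,\dots,x_N,x_N$ by replacing, for each $j$, the column of the second copy of $x_j$ by its derivative $\partial_{x_j}$; inserting this into $\Phi_{4,N}$ (whose per-particle weight becomes $w(x;2N)$ after doubling $\cal{W}$ and the $\alpha_i$) and applying de Bruijn's second Pfaffian integration formula produces the Pfaffian of the matrix with $(i,j)$-entry a constant multiple of $\int\big(\tfrac{d}{dx}(x^{i-1})\,x^{j-1}-x^{i-1}\tfrac{d}{dx}(x^{j-1})\big)w(x;2N)\,dx$, and again $\pf(cA)=c^{N}\pf(A)$, together with the $1/N!$ in $\Phi_{4,N}$, collapses the prefactors to $2^{-N}$.

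The only real work is bookkeeping, and that is the step to be careful with: (a) confirming that the weights $w(x;N)$ and $\sqrt{w(x;2N)}$ built into $M^2_N$, $M^1_{2N}$, $M^4_{2N}$ coincide with the integration weights of the corresponding $\Phi_{N,\beta}$ after the stated parameter substitutions --- in particular the factors of $2$ in front of $\cal{W}$ and the $\alpha_i$, and the change $N\mapsto 2N$ that turns $e^{-N\beta x^2}$ into $e^{-4Nx^2}$; and (b) reconciling the elementary prefactors --- the $(2N)!$ or $N!$ from symmetrization, the $\tfrac12$ per matrix entry, and the precise normalizations and signs in the Andr\'eief and de Bruijn formulas and in the confluent-Vandermonde expansion --- so that they reduce to $1$ for $\beta=2$ and to $2^{-N}$ for $\beta=1,4$. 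One should also note that $w\ge 0$ everywhere (needed for the square root defining $M^1_{2N}$) and that every integral converges, since $\alpha_i>-1/2$ makes the singularities $|x-\lam_i|^{2\alpha_i}$ locally integrable and the Gaussian factor controls the tails. Since the paper already observes that these representations follow ``up to normalization constants'' from \cite{beke} and are recorded in \cite{Stoj,WidomRelation}, one could alternatively invoke those and reduce the task purely to fixing the normalization constants.
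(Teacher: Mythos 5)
Correct, and the same approach: the paper's entire proof of this lemma is the one-line citation to Propositions 5.2.1, 6.1.7 and 6.3.4 of \cite{LogGas}, which are exactly the Andr\'eief and de Bruijn integration formulae you invoke and then unfold for each $\beta$. One small point worth flagging explicitly: for the weights to match, the first and third identities need $2\cal{W}$ (not $\cal{W}$) as the first argument of $\Phi$, since $w(x;N)$ carries $e^{2\cal{W}(x)}$ while the per-particle weight in $\Phi_{N,\beta}(\cal{W},\cdot,\cdot)$ carries only $e^{\cal{W}(x)}$ --- your phrase ``after doubling $\cal{W}$ and the $\alpha_i$'' handles this correctly (and it is what the application in Lemma \ref{OP identity} and Theorem \ref{main-theorem} requires), so that appears to be a typo in the lemma's statement rather than a gap in your argument.
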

	\begin{proof}
		This follows immediately from Propositions 5.2.1, 6.1.8, and 6.3.4 of \cite{LogGas}.
	\end{proof}
	From this we may further write (\ref{int-21}) as
	\[\frac{\pf(M^1_{2N}(\cal{W},\{\lam_i\}_i,\{\alpha_i\}_i))\pf(M^4_{2N}(\cal{W},\{\lam_i\}_i,\{\alpha_i\}_i))}{\det(M^2_{2N}(\cal{W},\{\lam_i\}_i,\{\alpha_i\}_i))}.\]
	To relate the various $M^\beta_N$ further we introduce operators $J_N^{\pm 1}:=J^{\pm 1}(N,\cal{W}, \{\lam_i\}_i, \{\alpha_i\}_i)$ by
	\[J_N=w_N(x)^{-1/2}\frac{d}{dx}w_N(x)^{1/2}=\frac{d}{dx}-2Nx+\sum_{i=1}^{m}\frac{\alpha_i}{x-\lam_i}+\cal{W}'(x),\]
	\[J_N^{-1}f(x)=w_N(x)^{-1/2}\frac{1}{2}\int \Sign(x-y)f(y)w_N(y)^{1/2}dy.\]
	We note that as long as $f$ is continuously differentiable and of sub-exponential growth, then
	\[(J_NJ_N^{-1}f)(x)=w_N(x)^{-1/2}\frac{d}{dx}\frac{1}{2}\int\sign(x-y)f(y)w_N(y)^{1/2}dy=f(x),\]
	and similarly $(J_NJ_N^{-1}f)(x)=f(x)$. As this will always be the case in this work, we will use these relations without mention. The utility of these operators is clear from the following classical relations.
	\begin{lem}
		\label{Anti-Symmetry}
		Let $f$ and $g$ be polynomials. Then
		\[\int (J_N^{-1}f)(x)g(x)w_N(x)dx=\frac{1}{2}\int\int g(x)f(y)\Sign(x-y)\sq{w_N(x)w_N(y)}dxdy,\]
		\[\int (J_Nf)(x)g(x)w_N(x)dx=\frac{1}{2}\int (f'(x)g(x)-f(x)g'(x))w_N(x)dx.\]
	\end{lem}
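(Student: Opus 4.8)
The plan is to prove both identities by direct manipulation of the principal value integrals, using only integration by parts and Fubini's theorem, together with the fact that $f,g$ are polynomials so that all the integrands decay at infinity against the Gaussian factor $e^{-2Nx^2}$ built into $w$. For the first identity, I would substitute the definition of $J^{-1}f$, obtaining
\[
\int (J^{-1}f(x))g(x)w(x)\,dx = \int w(x)^{-1/2}\,\frac{1}{2}\,\pv\!\int \Sign(x-y)f(y)w(y)^{1/2}\,dy\; g(x)w(x)\,dx,
\]
and then cancel $w(x)^{-1/2}w(x)=w(x)^{1/2}$ to land directly on the right-hand side; the only real content here is justifying that the iterated principal value integral equals a genuine double integral (as a principal value in the appropriate sense), which is exactly the kind of statement deferred to Appendix \ref{Double-Principal-Value-Limit-Section} of the excerpt, and I would simply cite that.

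For the second identity, I would write out $Jf = w^{-1/2}(w^{1/2}f)'$ so that
\[
\pv\!\int (Jf(x))g(x)w(x)\,dx = \pv\!\int (w^{1/2}f)'(x)\, g(x)\, w(x)^{1/2}\,dx,
\]
and then integrate by parts. The boundary terms vanish because of the Gaussian decay of $w$; the singularities of $w^{1/2}$ at the points $\lam_i$ (when some $\alpha_i\in(-1/2,0)$) are integrable and the principal value is what makes the off-setting contributions from the two sides of each $\lam_i$ cancel, so integration by parts is legitimate in the principal-value sense. Integrating by parts moves the derivative onto $g(x)w(x)^{1/2}$, producing $-\int w^{1/2}f\cdot (g w^{1/2})' = -\int w^{1/2} f\,(g' w^{1/2} + g (w^{1/2})')\,dx$. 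Expanding $(w^{1/2})' = \tfrac12 w^{-1/2} w'$ and recombining, the term involving $w'$ is symmetric in the roles of $f$ and $g$ up to sign, and one finds the antisymmetric combination $\tfrac{1}{2}\int (f'g - fg')w\,dx$ after symmetrizing; alternatively, and more cleanly, one writes $w^{1/2}(w^{1/2}f)' = wf' + \tfrac12 w' f$ and $w^{1/2}(w^{1/2}g)' = w g' + \tfrac12 w' g$, subtracts, and notes the $w'$ terms cancel, giving $w^{1/2}\big((w^{1/2}f)'g - (w^{1/2}g)'f\big) = w(f'g - fg')$; combining this with the integration-by-parts identity $\pv\!\int (w^{1/2}f)'g w^{1/2} = -\pv\!\int (w^{1/2}g)'f w^{1/2}$ (same boundary argument) yields $2\,\pv\!\int(Jf)gw\,dx = \pv\!\int w(f'g-fg')\,dx$, which is the claim. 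Since $f'g - fg'$ is a polynomial, the principal value on the right is an ordinary integral against $w$.

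The main obstacle is purely the careful handling of the principal-value integrals near each $\lam_i$: one must check that integration by parts does not generate spurious boundary contributions at the singular points and that Fubini applies to the double principal-value integral in the first identity. I expect to dispatch this by invoking the properties of the double principal value integral established in Appendix \ref{Double-Principal-Value-Limit-Section}, and by the elementary observation that $w^{1/2}$ times any polynomial is absolutely integrable with the singular contributions from the left and right of $\lam_i$ of opposite sign, so their principal-value sum is finite and integration by parts commutes with the $\e\to 0$ limit excising $(\lam_i-\e,\lam_i+\e)$.
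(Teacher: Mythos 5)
The paper states this lemma as a ``classical relation'' with no proof, so there is no paper argument to compare against; the task is to judge your proposal on its own. Your approach is the natural one, and your treatment of the second identity is correct: writing $Jf = w^{-1/2}(w^{1/2}f)'$, integrating by parts (checking that the boundary contributions at each $\lam_i$ vanish because $w\sim|x-\lam_i|^{2\alpha_i}$ with $2\alpha_i+1>0$, so $[wfg](\lam_i-\epsilon)-[wfg](\lam_i+\epsilon)=O(\epsilon^{2\alpha_i+1})\to 0$), and then using the pointwise identity $w^{1/2}\big((w^{1/2}f)'g-(w^{1/2}g)'f\big)=w(f'g-fg')$ gives exactly the stated formula. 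It would be worth making the exponent count explicit rather than only appealing to ``opposite-sign singular contributions,'' but the idea is right.

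For the first identity, however, your claim to ``land directly on the right-hand side'' after substituting the definition of $J^{-1}$ and cancelling $w^{-1/2}w=w^{1/2}$ quietly skips a relabelling: what you actually obtain is
\[
\frac{1}{2}\int\!\!\int \Sign(x-y)\,f(y)\,g(x)\,w(x)^{1/2}w(y)^{1/2}\,dy\,dx,
\]
whereas the lemma's right-hand side has $f$ at $x$ and $g$ at $y$. Swapping $x\leftrightarrow y$ to match the stated form flips $\Sign(x-y)$, so your computation yields the \emph{negative} of the displayed right-hand side. The same discrepancy appears if one compares $[M^1_N]_{ij}=\int (J^{-1}x^{i-1})x^{j-1}w\,dx$ against the stated definition of $M^1_N$, so it is almost certainly a sign typo in the source rather than an error in your method; still, a correct proof must track this relabelling rather than assert a direct match.

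Two smaller points. First, under the standing hypothesis $\alpha_i>-1/2$ the factors $w^{1/2}\sim|x-\lam_i|^{\alpha_i}$ are locally integrable, so the double integral in the first identity is absolutely convergent and Fubini applies classically; Appendix \ref{Double-Principal-Value-Limit-Section} concerns the genuinely harder case of a $1/(xy)$ kernel and is not the reference you need here. Second, the principal values in the lemma and in the definition of $J^{\pm 1}$ are only relevant when some $\alpha_i<0$, as the paper's own remark after the lemma notes; your discussion of the $\e\to 0$ excision should therefore be tied to the parameter range $-1/2<\alpha_i<0$ rather than invoked in general.
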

	We note that in view of this lemma, we may express, for even $N$,
	\[[M_{N}^1(\cal{W},\{\lam_i\}_i,\{\alpha_i\}_i)]_{ij}=\int (J_N^{-1}x^{i-1})x^{j-1}w_N(x)dx,\]
	\[[M_{N}^4(\cal{W},\{\lam_i\}_i,\{\alpha_i\}_i)]_{ij}=\int (J_Nx^{i-1})x^{j-1}w_N(x)dx.\]
	
	To further simplify this, it will be useful to recall the sequence of orthogonal polynomials related to $w_N$. Namely, let $p_i(x):=p_i(x;N,\cal{W},\{\lam_i\}_i,\{\alpha_i\}_i)$ denote the unique sequence of polynomials, with $p_i$ of degree-$i$, such that
	\[\int p_i(x)p_j(x)w_N(x)dx=\delta_{ij},\]
	and such that if we denote the leading coefficient of $p_i$ as $\kappa_i:=\kappa_i(N,\cal{W},\{\lam_i\}_i,\{\alpha_i\}_i)$, we have that $\kappa_i>0$. It is important to note that each $p_i$ is $N$-dependant. As we will keep $N$ fixed throughout the remainder of this section, we will omit $N$, as well $(\cal{W},\{\lam_i\}_i,\{\alpha_i\}_i)$, from the notation for the $p_i$ below.
	
	We introduce the Christoffel-Darboux kernel
	\[K_{N}(x,y)=\sum_{i=0}^{N-1}p_{i}(x)p_i(y),\]
	as well as the corresponding operator
	\[(\Pi_{N} f)(x)=\int K_{N}(x,y)f(y)w_N(y)dy.\]
	Let us denote the space of polynomials of degree less than $m$ as $\cal{P}_m$. We will denote the space of polynomials as $\cal{P}$. Both of these inherit the inner-product $(f,g)_{w,N}=\int f(x)g(x)w_N(x)dx$. With respect to this inner-product $\Pi_N$ is the orthogonal projection of $\cal{P}$ onto $\cal{P}_N$. We will also use the notation $\Pi_N^{\perp}=I-\Pi_N$. We introduce an $N \times N$ matrix given for $1\le i,j\le N$ as
	\[[\Delta_N]_{ij}=\int (J_N^{-1}\Pi_N J_Np_{i-1})(x)p_{j-1}(x)w_N(x)dx.
	\label{delta_def}\]
	We observe that the matrix $\Delta_N$ is simply the representation of the operator $\Pi_N J_N^{-1}\Pi_N J_N \Pi_N$ with respect basis of $\cal{P}_N$ given by the orthogonal polynomials. We will often refer to $\Pi_N J_N^{-1}\Pi_N J_N \Pi_N$ as $\Delta_N$ when no confusion may arise. We have the following formula.
	\begin{lem}
		\label{OP identity}
		For $N$ even, we have that
		\[\bigg[\frac{\pf(M^1_{N}(\cal{W},\{\lam_i\}_i,\{\alpha_i\}_i))\pf(M^4_{N}(\cal{W},\{\lam_i\}_i,\{\alpha_i\}_i))}{\det(M^2_{N}(\cal{W},\{\lam_i\}_i,\{\alpha_i\}_i))}\bigg]^2=\det(\Delta_{N}(\cal{W},\{\lam_i\}_i,\{\alpha_i\}_i)).\]
	\end{lem}
	\begin{proof}
		For simplicity, we will omit $(\cal{W},\{\lam_i\}_i,\{\alpha_i\}_i)$ from the notation for $M^\beta_N$ for this proof. By changing to the basis of $\cal{P}_N$ given by orthogonal polynomials, we see that $\det(M_N^2)=\prod_{i=0}^{N-1}\kappa_i^2$. In addition, we see that if we define for $1\le i,j\le N$,
		\[[M^{'1}_N]_{ij}=\int (J_N^{-1}p_{i-1})(x)p_{j-1}(x)w_N(x)dx,\qquad [M^{'4}_N]_{ij}=\int (J_Np_{i-1})(x)p_{j-1}(x)w_N(x)dx,\]
		then by again changing the basis, we have that
		\[\det(M^1_{2N})\det(M^4_N)=(\prod_{i=0}^{N-1}\kappa_i^2)^2\det(M^{'1}_N)\det(M^{'4}_N). \label{Misc-Pfaffian-Equation}\]
		On the other hand, we see that
		\[[M^{'1}_N(M^{'4}_N)^T]_{ij}=\sum_{k=0}^{N-1}(\int (J_N^{-1}p_{i-1})(x)p_k(x)w_N(x)dx)(\int (J_Np_{j-1})(y)p_k(y)w_N(y)dy)=\]
		\[\int (J_N^{-1}p_{i-1})(x)(\int K_N(x,y)(J_Np_{j-1})(y)w_N(y)dy)w_N(x)dx=\]
		\[\int (J_N^{-1}p_{i-1})(x)(\Pi_N J_Np_{j-1})(x)w_N(x)dx=-\int p_{i-1}(x)(J_N^{-1}\Pi_N J_Np_{j-1})(x)w_N(x)dx.\]
		In particular
		\[\bigg[\frac{\pf(M^1_{N})\pf(M^4_{N})}{\det(M^2_{N})}\bigg]^2=\det(M^{'1}_N)\det(M^{'4}_N)=(-1)^N\det(\Delta_N).\]
		As $N$ is even, this completes the proof.
	\end{proof}
	These lemmas combined complete the verification of (\ref{Ratio-Form}). To proceed, we must reduce $\Delta_N$ to a more manageable form. In view of the identity,
	\[\Pi_N J_N^{-1}\Pi_N J_N\Pi_N=\Pi_N-\Pi_N J_N^{-1}[J_N,\Pi_N]\Pi_N\label{first-commutator-iden}\]
	we see that it suffices to understand the action of $J_N^{-1}[J_N,\Pi_N]$ on $\cal{P}_N$. To do so, we first compute the commutator of $[J_N,\Pi_N]$ on $\cal{P}_N$. For this, we recall the Christoffel-Darboux formula (see \cite{szego}):
	\[K_N(x,y)=\frac{\k_{N-1}}{\k_N}\left(\frac{p_N(x)p_{N-1}(y)-p_{N-1}(x)p_N(y)}{x-y}\right);\quad x\neq y, \label{chris-1}\]
	\[K_N(x,x)=\frac{\k_{N-1}}{\k_N}\left(p_N'(x)p_{N-1}(x)-p_{N-1}'(x)p_N(x)\right),\label{chris-2}\]
	as well as the classical three-term recurrence
	\[xp_{j}(x)=b_{j-1}p_{j-1}(x)+a_jp_j(x)+b_jp_{j+1}(x),\;\; b_j=\frac{\k_j}{\k_{j+1}},\;\; a_j=\beta_j-\beta_{j+1}, \label{ThreeTerm}\]
	where $\beta_j$ is defined so that $p_{j}(x)\k_j^{-1}=x^j+\beta_jx^{j-1}+\cdots$.
	Additionally, we denote by
	\[H_N(f)(y)=\pv\int\frac{f(x)}{x-y}w_N(x)dx\]
	the $w_N$-weighted Hilbert transform of $f$, where here $\pv$ denotes the Cauchy principal value integral taken at $y$. We define, for $1\le i\le m$,
	\[\ell_i(x)=e^{N\lam_i^2}N^{\alpha_i}\frac{\kappa_{N-1}}{\k_N}\frac{p_{N}(x)H_N(p_{N-1})(\lam_i)-H_N(p_{N})(\lam_i)p_{N-1}(x)}{x-\lam_i},
	\label{l-def}\]
	\[q_i(x)=e^{-N\lam_i^2}N^{-\alpha_i}K_N(x,\lam_i)\label{q-def}.\]
	The utility of these functions comes from the following decomposition.
	\begin{prop}
		\label{Commutator-Decom}
		For $f\in \cal{P}_N$, we have that
		\[[J_N, \Pi_N]f=-\sum_{k=1}^{m}e^{-N\lam_k^2}N^{-\alpha_k}\alpha_k\ell_k f(\lam_k)-2N\Pi_N^{\perp}(xf(x))+\Pi_N^{\perp}(\cal{W}'(x)f(x))=\]\[-\sum_{k=1}^{m}\alpha_k\ell_k \int q_k(x)f(x)w_N(x)dx-2N\Pi_N^{\perp}(xf(x))+\Pi_N^{\perp}(\cal{W}'(x)f(x)) .\label{Comm-Decom-Eqn}\]
	\end{prop}
	\begin{proof}[Proof of Proposition \ref{Commutator-Decom}]
		Observing that $\Pi_N f=f$ we see that
		\[[J_N,\Pi_N]f=\left[\frac{d}{dx},\Pi_N\right]f-2N\left[x,\Pi_N\right]f+\sum_{k}\alpha_k\left[\frac{1}{(x-\lam_k)},\Pi_N\right]f+[\cal{W}',\Pi_N]f=\]
		\[\Pi_N^{\perp}(f')-2N\Pi_N^{\perp}(xf)+\Pi_N^
		{\perp}(\cal{W}'f)+\sum_{k}\alpha_k\left[\frac{1}{(x-\lam_k)},\Pi_N\right]f.\]
		We note that $\Pi_N^{\perp}(f')=0$ so it suffices to evaluate $[(x-\lam_k)^{-1},\Pi_N]$. We compute that
		\[
		([\frac{1}{(x-\lam_k)},\Pi_N]f)(x)=\]\[\int(\frac{1}{(x-\lam_k)}-\frac{1}{(y-\lam_k)})\frac{k_{N-1}}{k_N}\frac{p_N(x)p_{N-1}(y)-p_N(y)p_{N-1}(x)}{(x-y)}f(y)w_N(y)dy=\]
		\[-\int\frac{k_{N-1}}{k_N}\frac{p_N(x)p_{N-1}(y)-p_N(y)p_{N-1}(x)}{(x-\lam_k)(y-\lam_k)}f(y)w_N(y)dy=\]
		\[-\int\frac{k_{N-1}}{k_N}\frac{p_N(x)p_{N-1}(y)-p_N(y)p_{N-1}(x)}{(x-\lam_k)}(\frac{f(y)-f(\lam_k)}{y-\lam_k}+\frac{f(\lam_k)}{y-\lam_k})w_N(y)dy=\]
		\[-\int\frac{k_{N-1}}{k_N}\frac{p_N(x)p_{N-1}(y)-p_N(y)p_{N-1}(x)}{(x-\lam_k)(y-\lam_k)}f(\lam_k)w_N(y)dy=-e^{-N\lam_k^2}N^{-\alpha_k}\ell_k(x)f(\lam_k),
		\]
		where in second to last equality we have used that fact that $(f(y)-f(\lam_k))/(y-\lam_k)$ is a polynomial of degree less than $N-1$, so that
		\[\int p_{N}(y)\frac{f(y)-f(\lam_k)}{y-\lam_k}w_N(y)dy=\int p_{N-1}(y)\frac{f(y)-f(\lam_k)}{y-\lam_k}w_N(y)dy=0.\]
		Together, these statements establish the first equality. To obtain the second equality, we observe that
		\[\int q_i(x)f(y)w_N(x)dx=e^{-N\lam_i^2}N^{-\alpha_i}f(\lam_i).\label{Evaulation-Lemma}\]
	\end{proof}
	We now introduce a modification of $\Delta_N$ that will be more computable. For $f\in \cal{P}_N$ let
	\[\Delta_N^0f=f+\Pi_N J_N^{-1}(\sum_{k=1}^{m}\alpha_k\ell_k\int q_k(y)f(y)w_N(y)dy+2N\Pi_N^{\perp}(xf(x))-\Pi_N^{\perp}(\cal{W}_0'(x)f(x))) \label{eqn:ignore-144}\]
	and let us denote the difference\[\Delta_N^1=\Delta_N-\Delta_N^0=-\Pi_N J_N^{-1} \Pi_N^{\perp}\cal{E}',\label{eqn:ignore-145}\]
	where $\cal{E}=\cal{W}-\cal{W}_0$, as before. It is clear that when $\cal{W}=0$, we have that $\Delta_N^0=\Delta_N$. In general, we shall show that this difference is negligible on the exponential scale.
	\begin{prop}
		Let $(\cal{W},\{\lam_i\}_i,\{\alpha_i\}_i)$ be as in Theorem \ref{main-theorem}. There exists a choice of $c>0$ such that
		\[\det(\Delta_N)=\det(\Delta_N^0)+O(e^{-Nc}).\]
		Moreover for any choice of compact subset of $\{\lam_i\}_{i=1}^{m}\in \{(\lam_1,\cdots \lam_m)\in (-1,1)^m:\lam_1>\cdots >\lam_m\}$, we may choose $c>0$ small enough that the error term is uniform over the chosen of compact subset.
		\label{Exponential-Error-Prop}
	\end{prop}
	This proposition will be proven in Section \ref{Exponential Error Section}. We now turn our attention to the variant $\Delta_N^0$. Let us write $d=\max(1,\deg(\cal{W}_0)-1)$, and let us define two subspaces of $\cal{P}_N$:\[\cal{P}_{N,2}=\{f\in \cal{P}_{N-d}:f(\lam_1)=\cdots =f(\lam_m)=0\},\]
	and $\cal{P}_{N,1}=\cal{P}_{N,2}^{\perp}$, the subspace of $\cal{P}_N$ which is orthogonal to $\cal{P}_{N,2}$ with respect to the inner-product product of $L^2(w_N)$. We first show a preliminary result about these subspaces.
	\begin{lem}
		\label{Added-lem}
		If $N>m+d+1$, then $\cal{B}_{N}=(q_{1},\cdots, q_{m},p_{N-d},\cdots, p_{N-1})$ is a basis of $\cal{P}_{N,1}$. Additionally for $f\in \cal{P}_{N,2}$, we have that $\Delta_N^0f=f$.
	\end{lem}
	\begin{proof}
		By the Euclidean algorithm for polynomials, we conclude that $\dim(\cal{P}_{N,2})=N-m-d$, so that $\dim(\cal{P}_{N,1})=m+d$. In view of (\ref{Evaulation-Lemma}) we see that $\cal{B}_{N}\subseteq \cal{P}_{N,1}$. Thus to show that $\cal{B}_{N}$ is a basis for $\cal{P}_{N,1}$, it suffices to show that it is linearly independent. For this, we observe that for $1\le i,j\le m$, and $1\le l\le d$
		\[\int q_i(x)\prod_{k=1,k\neq j}^{m}(x-\lam_k)w_N(x)dx=\delta_{ij}e^{-N\lam_i^2}N^{-\alpha_i}\prod_{k=1,k\neq j}^{m}(\lam_i-\lam_k),\]\[\int p_{N-l}(x)\prod_{k=1,k\neq j}^{m}(x-\lam_k)w_N(x)dx=0.\]
		From the first relation we observe that $(q_1,\cdots q_m)$ is linearly independent, and when combined with the second we see that \[\linSpan(q_1,\cdots q_m)\cap \linSpan(p_{N-d},\cdots, p_{N-1})=\{0\}.\] As the set $(p_{N-d},\cdots p_{N-1})$ is linearly independent by construction, these statements combined prove the linear independence of $\cal{B}_{N}$, which completes the proof of the first claim.
		
		To demonstrate the second claim, we observe that if $f\in \cal{P}_{N,2}$, then $(2Nx-\cal{W}_0'(x))f(x)\in \cal{P}_N$, so $\Pi_N^{\perp}(2Nx-\cal{W}_0'(x))f(x)=0$. We have as well that
		\[\sum_{k=1}^{m}\alpha_k\ell_k\int q_k(y)f(y)w_N(y)dy=\sum_{k=1}^{m}\alpha_k\ell_ke^{-N\lam_k^2}N^{-\alpha_k}f(\lam_k)=0,\]
		so indeed, we see that for $f\in \cal{P}_{N,2}$, only the identity term is nonzero, which completes the proof of the second claim.
	\end{proof}
	Let us now write the block-decomposition of the operator $\Delta_N^0$ on $\cal{P}_N$ with respect to $(\cal{P}_{N,1},\cal{P}_{N,2})$ as
	\[\Delta_N^0=\begin{bmatrix}[\Delta_N^0]_{11}& [\Delta_N^0]_{12}\\
		[\Delta_N^0]_{21}& [\Delta_N^0]_{22}\end{bmatrix},\]
	and similarly for other operators on $\cal{P}_N$. From the second claim of Lemma \ref{Added-lem}, we see that $[\Delta_N^0]_{12}=0$ and $[\Delta_N^0]_{22}=I$. In particular, we have that $\det(\Delta_N^0)=\det([\Delta_N^0]_{11})$.
	
	We now focus our attention on understanding $\det([\Delta_N^0]_{11})$. To proceed we observe that for $f\in \cal{P}_N$, we have that $(2Nx-\cal{W}'_0(x))f(x)\in \cal{P}_{N+d}$. Noting that for $g\in \cal{P}_{N+d}$,
	\[(\Pi_{N}^{\perp}g)(x)=\sum_{i=1}^{d}p_{N+i-1}(x)\int p_{N+i-1}(y)g(y)w_N(y)dy,\]
	we see that
	\[(\Pi_N^{\perp}(2Ny-\cal{W}'_0(y))f(y))(x)=\sum_{i=1}^{d}p_{N+i-1}(x)\int p_{N+i-1}(y)(2Ny-\cal{W}'_0(y))f(y)w_N(y)dy=\]
	\[\sum_{i=1}^{d}\sum_{k=0}^{N-1}p_{N+i-1}(x)\int (2Nz-\cal{W}'_0(z))p_{N+i-1}(z)p_{k}(z)w_N(z)dz\int p_k(y)f(y)w_N(y)dy=\]
	\[\sum_{i=1}^{d}\sum_{j=1}^{d}p_{N+i-1}(x)\int (2Nz-\cal{W}'_0(z))p_{N+i-1}(z)p_{N-j}(z)w_N(z)dz\int p_{N-j}(y)f(y)w_N(y)dy,\label{new-relation}\]
	where we have employed the relation $\Pi_Nf=f$ in the second equality, and in the third we have observed that $(2Nz-\cal{W}_0'(z))p_{k}(z)\in \cal{P}_{k+d+1}$ so that if $k<N-d$ and $i>0$
	\[\int (2Nx-\cal{W}_0'(x))p_{N+i-1}(x)p_{k}(x)w_N(x)dx=0.\]
	If we define $d$-by-$d$ matrices $M_{N,0},M_{N,1}$ by setting for $1\le i,j\le d$,
	\[[M_{N,0}]_{ij}=\int 2Nzp_{N+i-1}(z)p_{N-j}(z)w_N(z)dz,\;\;[M_{N,1}]_{ij}=-\int \cal{W}_0'(z)p_{N+i-1}(z)p_{N-j}(z)w_N(z)dz,\]
	then we may rewrite (\ref{new-relation}) as
	\[\Pi_N^{\perp}(2Ny-\cal{W}_0'(y)f(y))(x)=\sum_{i,j=1}^{d}[M_{N,0}+M_{N,1}]_{ij}p_{N+i-1}(x)\int p_{N-j}(y)f(y)w_N(y)dy.\label{new-relation-2}\]
	To understand the other terms in $\Delta_N^0$ we further define an $(d+m)$-by-$(d+m)$ matrix, $M_N$, by
	\[M_N=   \left(\begin{array}{@{}c|ccc@{}}
		M_{N,0}+M_{N,1} &  &  & \\\hline
		& \alpha_1 &  &  \\
		&  & \ddots & \\
		&  &  & \alpha_m
	\end{array}\right),\]
	and define an $(d+m)$-by-$(d+m)$ inner-product matrix by
	\[D_N=\left(\begin{array}{@{}c|c@{}}
		\int J_N^{-1}p_{N+j-1}(x)p_{N-i}(x)w_N(x)dx & \int J_N^{-1}\ell_l(x) p_{N-i}(x)w_N(x)dx \\\hline
		\int J_N^{-1}p_{N+j-1}(x)q_k(x) w_N(x)dx & \int J_N^{-1}\ell_{l}(x)q_k(x)w_N(x)dx  \\
	\end{array}\right)\]
	where $1\le i,j\le d$ and $1\le k,l\le m$. We now have the following identification.
	\begin{lem}
		\label{lem:ignore-207}
		For $N>d+m+1$, we have that $\det(\Delta^0_N)=\det(I+D_NM_N)$.
	\end{lem}
	\begin{proof}
		Let us denote $v_i=p_{N-i}$ for $1\le i\le d$ and $v_i=q_{i-d}$ for $d<i\le m+d$. By Lemma \ref{Added-lem} this is a basis for $\cal{P}_{N,1}$. We shall denote the matrix for the operator $[\Delta^0_N]_{11}$ with respect to this basis as $\Delta_{N,\cal{B}}^0$, so that by definition $\det([\Delta^0_N]_{11})=\det(\Delta_{N,\cal{B}}^0)$. We will also denote $u_i=p_{N+i-1}$ for $1\le i\le d$, and $u_i=\ell_{i-d}$ for $d<i\le m+d$. Now with (\ref{new-relation-2}), and recalling the notation $(f,g)_{w,N}=\int f(x)g(x)w_N(x)dx$, we see that for $f\in \cal{P}_N$
		\[\Delta_N^0f=f+\sum_{i,j=1}^{d+m}(\Pi_NJ_N^{-1}u_i)M_{ij}(v_j,f)_{w,N}.\label{eqn:ignore-155}\]
		Let us define an $(m+d)$-by-$(m+d)$ matrix $O$ by setting for $1\le i,j\le m+d$, $O_{ij}=(v_j,v_i)_{w,N}$. This matrix is invertible as $(v_i)_{i=1}^{d+m}$ is a linearly independent set. We also observe that
		\[(\Delta^0_Nv_j,v_i)_{w,N}=([\Delta^0_N]_{11}v_j,v_i)_{w,N}=\sum_{k=1}^{d+m}([\Delta^0_{N,\cal{B}}]_{kj}v_k,v_i)_{w,N}=[O\Delta_{N,\cal{B}}^0]_{ij}.\]
		On the other hand noting that $[D_{N}]_{ij}=(J^{-1}_Nu_j,v_i)_{w,N}$, we by (\ref{eqn:ignore-155}) that
		\[(\Delta^0_Nv_j,v_i)_{w,N}=O_{ij}+(D_NM_NO)_{ij},\]
		so that finally
		\[\Delta_{N,\cal{B}}^0=O^{-1}(I+D_NM_N)O.\]
		Taking the determinant of both sides completes the proof.
	\end{proof}
	We will later derive that (see Lemma \ref{Coefficient Lemma New}) for fixed $k\in \Z$ we have that $b_{N-k},a_{N-k}=O(1)$. Repeatedly applying (\ref{ThreeTerm}) we may thus conclude that for $1\le i,j\le m+d$, $[M_{N,1}]_{ij}=O(1)$ and $[M_{N,0}]_{ij}=2N b_{N-1}\delta_{ij}\delta_{j1}=O(N)\delta_{ij}\delta_{j1}$. Our understanding of $D_N$ comes from the following proposition.
	\begin{prop}
		\label{W-Integral-Proposition}
		For $N$ even, $1\le l,k\le m$, and $1\le i,j\le d$ we have that
		\[\int J_N^{-1}p_{N+j-1}(x)p_{N-i}(x)w_N(x)dx=O(N^{-1}),\;\;\;\int J_N^{-1}\ell_l(x) p_{N-i}(x)w_N(x)dx=O(N^{-1}\log(N)),\]\[
		\int J_N^{-1}p_{N+j-1}(x)q_k(x) w_N(x)dx=O(N^{-1/2}),\]\[
		\int J_N^{-1}\ell_{l}(x)q_k(x)w_N(x)dx=O(N^{-1/2}),\;\;\;\int J_N^{-1}p_{N}(x)p_{N-1}(x)w_N(x)dx=O(N^{-7/6}).\]
		Moreover each error term is uniform over compact subsets of $\{\lam_i\}_{i=1}^{m}\in \{(\lam_1,\cdots \lam_m)\in (-1,1)^m:\lam_1>\cdots >\lam_m\}$.
	\end{prop}
	The proof of this will be carried out in Section \ref{Integral Section}. We are now ready to give the proof of Theorem \ref{main-theorem}.
	
	\begin{proof}[Proof of Theorem \ref{main-theorem}]
		As noted above, it suffices to show that $\det(\Delta_N)=1+O(N^{-1/6})$. By Proposition \ref{Exponential-Error-Prop}, we see that it further suffices to show that $\det(\Delta^0_N)=\det(I+D_N M_N)=1+O(N^{-1/6})$. If we write $D_N$ as a block matrix with square blocks of size $1$ by $(d-1)$ by $m$, we see that Proposition \ref{W-Integral-Proposition}
		shows that
		\[D_N=\begin{bmatrix}
			O(N^{-7/6})& O(N^{-1})& O(N^{-1}\log(N))\\
			O(N^{-1})& O(N^{-1}) & O(N^{-1}\log(N))\\
			O(N^{-1/2}) & O(N^{-1/2}) & O(N^{-1/2})\\
		\end{bmatrix},\]
		where all errors are taken entrywise in each block. Similarly, the above observations for $M_N$ show that
		\[M_N=\begin{bmatrix}
			O(N)& O(1)& 0\\
			O(1)& O(1) & 0\\
			0 & 0 & O(1)\\
		\end{bmatrix}.\]
		In particular, we see that
		\[I+D_N M_N=\begin{bmatrix}
			1+O(N^{-1/6})& O(N^{-1})& O(N^{-1}\log(N))\\
			O(1)& I+O(N^{-1}) & O(N^{-1}\log(N))\\
			O(N^{1/2}) & O(N^{-1/2}) & I+O(N^{-1/2})\\
		\end{bmatrix}.\]
		If we denote the diagonal matrix $\Lambda_N=\diag(N^{-2/3},1,\cdots, 1)$ we see that
		\[\Lambda_N^{-1}(I+ D_N M_N)\Lambda_N=I+\Lambda_N^{-1} D_N M_N\Lambda_N=\begin{bmatrix}
			1+O(N^{-1/6})& O(N^{-1/3})& O(N^{-1/3}\log(N))\\
			O(N^{-2/3})& I+O(N^{-1}) & O(N^{-1}\log(N))\\
			O(N^{-1/6}) & O(N^{-1/2}) & I+O(N^{-1/2})\\
		\end{bmatrix}.\]
		Taking the determinant of this expression we see that \[\det(I+ D_N M_N)=\det(\Lambda_N^{-1}(I+ D_N M_N)\Lambda_N)=1+O(N^{-1/6}).\]
		Noting as well that all of the error estimates we have used are uniform in the sense of Theorem \ref{main-theorem}, we obtain the desired statement.
	\end{proof}
	
	\subsection{Case of Merging Singularities}
	
	As mentioned above, to prove Theorem \ref{GMCtheorem}, we will need to supplement Theorem \ref{main-theorem} with an analogous result in the case in which $m=2$, $\cal{W}=0$, and $\alpha_1=\alpha_2=\alpha$, but where $\lam_1,\lam_2$ are allowed to depend on $N$ and $\lam_1-\lam_2\to 0$. We will assume this set-up for the remainder of this subsection. We have the following analog of Theorem \ref{main-theorem}.
	
	\begin{prop}
		\label{Main-prop-Merging-2}
		Let us fix a choice $\alpha>0$, $\epsilon>0$, and $0<\gamma<1$. Then for any $\lam_1,\lam_2\in (-1+\epsilon,1-\epsilon)$ (possibly $N$-dependant) such that $(\lam_1-\lam_2)>N^{-\gamma}$, we have that
		\[\E[\prod_{i=1}^{2}|D_{2N,1}(\lam_i)|^{\alpha}] \E[\prod_{i=1}^{2}|D_{N,4}(\lam_i)|^{2\alpha}]=\E[\prod_{i=1}^{2}|D_{2N,2}(\lam_i)|^{2\alpha}](1+O(\max(N^{-1/6},N^{\gamma-1}\log(N)))).\label{lars}\]
		Moreover the error term is uniform over all available choices of $\lam_1,\lam_2\in(-1+\epsilon,1-\epsilon)$ with $(\lam_1-\lam_2)>N^{-\gamma}$.
	\end{prop}
	\begin{remark}
		We observe that as we have allowed the choice of $(\lam_1,\lam_2)$ to be $N$-dependant, the uniformity of the error claimed in Proposition \ref{Main-prop-Merging-2} follows immediately from the pointwise statement of (\ref{lars}).
	\end{remark}
	
	\begin{remark}
		We note that the case of $m=2$ and $(\lam_1-\lam_2)=0$ coincides with Theorem \ref{main-theorem} in the case of $m=1$ with $\lam_1=\lam_1$ and $\alpha\mapsto 2\alpha$. In particular, by Theorem \ref{main-theorem}, $|\det(\Delta_N)|=1+O(N^{-1/6})$. As such, we expect that the apparent divergence of the error-term in Proposition \ref{Main-prop-Merging-2} as $N(\lam_1-\lam_2)=O(1)$ is an artifact of our method of proof, which avoids using more refined asymptotics for the singularities in the merging case.
	\end{remark}
	
	All of the constructions of the above section work without modification. We also note that an analog of Proposition \ref{Exponential-Error-Prop} is not required as $\cal{W}=0$ in this case. We will show in Proposition \ref{Merging Error Bulk} below, that the conclusions of Lemma \ref{Coefficient Lemma New} still hold, so that $[M_{N,0}]_{11}=2N b_{N-1}=O(N)$ uniformly in the choice of $(\lam_1,\lam_2)$. We also have that $M_{N,1}=0$. Finally, we will need the following modification of Proposition \ref{W-Integral-Proposition}.
	\begin{prop}
		\label{merging-W-Integral-Proposition}
		With the assumptions of Proposition \ref{Main-prop-Merging-2} and $N$ even, we have for $1\le l,k\le 2$ that
		\[\int J_N^{-1}p_{N}(x)p_{N-1}(x)w_N(x)dx=O(N^{-1-\min(1-\gamma,1/6)}),\]\[\int J_N^{-1}\ell_l(x) p_{N-1}(x)w_N(x)dx=|\lam_1-\lam_2|^{\alpha}O(\max(N^{-1/2-(1-\gamma)},N^{-1}\log(N))),\]\[
		\int J_N^{-1}p_{N}(x)q_k(x) w_N(x)dx=|\lam_1-\lam_2|^{-\alpha}O(N^{-1/2}),\]\[
		\int J_N^{-1}\ell_{l}(x)q_k(x)w_N(x)dx=O(\max(N^{-1/2},N^{\gamma-1}\log(N))).\]
		Moreover all error terms are uniform over all available choices of $\lam_1,\lam_2\in(-1+\epsilon,1-\epsilon)$ with $(\lam_1-\lam_2)>N^{-\gamma}$.
	\end{prop}
	
	The proof of this result will be given in Section \ref{merging section}.
	
	\begin{proof}[Proof of Proposition \ref{Main-prop-Merging-2}]
		As above, it will suffice to show that \[\det(I+D_NM_N)=1+O(N^{-\min(1-\gamma,1/6)}\log(N)).\]
		If we let $\delta=\min(1-\gamma,1/6)$ and define the diagonal matrix $\Lam_N=\diag(N^{-1/2-\delta}|\lam_1-\lam_2|^{\alpha},1,1)$, then as above, Proposition \ref{merging-W-Integral-Proposition} shows that when we write the $3$-by-$3$ matrix $\Lam_N^{-1}D_NM_N\Lam_N$ in terms of square blocks of size $1$ and $2$ we have that
		\[I+\Lam_N^{-1}D_NM_N\Lam_N=\begin{bmatrix}
			1+O(N^{-\delta})& O(\max(N^{-(1-\gamma)+\delta},N^{-1/2+\delta}\log(N)))\\
			O(N^{-\delta})&I+O(\max(N^{-1/2},N^{\gamma-1}\log(N)))
		\end{bmatrix}.\label{eqn-ignore-339}\]
		We observe that $\max(N^{-(1-\gamma)+\delta},N^{-1/2+\delta}\log(N))\le 1$ and that $\max(N^{-1/2},N^{\gamma-1}\log(N))\le N^{-\delta}\log(N)$ so that (\ref{eqn-ignore-339}) implies the weaker bound
		\[I+\Lam_N^{-1}D_NM_N\Lam_N=
		\begin{bmatrix}
			1+O(N^{-\delta})& O(1)\\
			O(N^{-\delta})&I+O(N^{-\delta}\log(N))
		\end{bmatrix}.\]
		Taking a cofactor expansion in the first column we see that \[\det(I+\Lam_N^{-1}D_NM_N\Lam_N)=1+O(N^{-\delta}\log(N)),\] implies the desired claim.
	\end{proof}
	
	\section{Proof of Theorem \ref{GMCtheorem}\label{GMC section}}
	In this section, we will complete the proof of Theorem \ref{GMCtheorem} given in the introduction, assuming Theorem \ref{main-theorem} and Proposition \ref{Main-prop-Merging-2}. As explained in the introduction, it will suffice to show that the quantity
	\[\lim_{M\to\infty}\lim_{N\to\infty}\E(\nu_{N,\alpha}(\varphi)-\nu_{N,M,\alpha}(\varphi))^2\label{Jensen-Limit-2}\]
	coincides with the quantity
	\[\lim_{M\to\infty}\lim_{N\to\infty}\E(\mu_{2N,2,2\alpha}(\varphi)-\mu_{2N,M,2,2\alpha}(\varphi))^2\]
	which vanishes for $0<2\alpha<\sq{2}$ by Proposition 2.9 of \cite{GUEGMC}. We begin by noting that by classical asymptotics for smooth, compactly-supported linear statistics (see \cite{Johansson}), we have that
	\[\lim_{M\to \infty}\lim_{N\to \infty}\E(\nu_{N,M,\alpha}(\varphi)^2)=\lim_{M\to \infty}\lim_{N\to \infty}\E(\mu_{2N,M,2,2\alpha}(\varphi)^2).\]
	For the remaining terms, we have that
	\[\E(\nu_{N,\alpha}(\varphi)^2)=\int \int \varphi(x)\varphi(y)\frac{\E |D_{2N,1}(x)|^{\alpha}|D_{2N,1}(y)|^{\alpha}\E |D_{N,4}(x)|^{2\alpha}|D_{N,4}(y)|^{2\alpha}}{\E |D_{2N,1}(x)|^{\alpha}\E |D_{2N,1}(y)|^{\alpha}\E|D_{N,4}(x)|^{2\alpha}\E |D_{N,4}(y)|^{2\alpha}}dxdy\label{Integral-of-Second-Moment}.\]
	\[\E(\nu_{N,\alpha}(\varphi)\nu_{N,M,\alpha}(\varphi))=
	\int \int \varphi(x)\varphi(y)\frac{\E |D_{2N,1}(x)|^{\alpha}e^{\alpha X_{2N,M,1}(y)}\E |D_{N,4}(x)|^{2\alpha}e^{2\alpha X_{N,M,4}(y)}}{\E |D_{2N,1}(x)|^{\alpha}\E e^{\alpha X_{2N,M,1}(y)}\E |D_{N,4}(x)|^{2\alpha}\E e^{2\alpha X_{N,M,4}(y)}}dxdy.\label{integral-2-nu}\]
	Working first with the latter, we may apply Theorem \ref{main-theorem} to the integrand of (\ref{integral-2-nu}) to obtain that
	\[\E(\nu_{N,\alpha}(\varphi)\nu_{N,M,\alpha}(\varphi))=\int \int \varphi(x)\varphi(y)\frac{\E |D_{2N,2}(x)|^{2\alpha}e^{2\alpha X_{2N,M,2}(y)}}{\E |D_{2N,2}(x)|^{2\alpha}\E e^{2\alpha X_{2N,M,2}(y)}}(1+O(N^{-1/6}))dxdy=\]\[\int \int \varphi(x)\varphi(y)\frac{\E |D_{2N,2}(x)|^{2\alpha}e^{2\alpha X_{2N,M,2}(y)}}{\E |D_{2N,2}(x)|^{2\alpha}\E e^{2\alpha X_{2N,M,2}(y)}}dxdy(1+O(N^{-1/6}))=\]\[\E( \mu_{2N,2,2\alpha}(\varphi)\mu_{2N,M,2,2\alpha}(\varphi))(1+O(N^{-1/6})),\]
	where we have used the uniformity of the $O(N^{-1/6})$-term to obtain the second equality. Thus we are left to understand (\ref{Integral-of-Second-Moment}). This case is complicated by the non-uniformity of the error of Theorem \ref{main-theorem} around the diagonal. We will proceed similarly to the proof of Proposition 6.4 of \cite{GUEGMC}. For any $\epsilon>0$ and $\beta\in (2\alpha^2,1)$, we may decompose the integral of (\ref{Integral-of-Second-Moment}) into three integrals over $\{|x-y|>\epsilon\}$, $\{\epsilon>|x-y|>2N^{-\beta}\}$, and $\{2N^{-\beta}>|x-y|\}$. We denote these integrals by $A_{1,\epsilon}^N$, $A_{2,\epsilon}^N$ and $A_{3}^N$, respectively. Identically to the above case, we can show that for fixed $\epsilon>0$
	\[A_{1,\epsilon}^N=\int\int_{|x-y|>\epsilon} \varphi(x)\varphi(y)\frac{\E |D_{2N,2}(x)|^{2\alpha}|D_{2N,2}(y)|^{2\alpha}}{\E |D_{2N,2}(x)|^{2\alpha}\E |D_{2N,2}(y)|^{2\alpha}}dxdy(1+O(N^{-1/6})).\]
	In view of the proof of Proposition 6.4 of \cite{GUEGMC}, we have that
	\[\lim_{\epsilon\to 0}\lim_{N\to\infty} A_{1,\epsilon}^N=\lim_{N\to \infty}\E(\mu_{2N,2,2\alpha}(\varphi)^2),\]
	so that it suffices to show that $\lim_{N\to \infty} A_3^N=0$ and that $\lim_{\epsilon\to 0}\limsup_{N\to \infty} A_{2,\epsilon}^N=0$.
	To bound $A_{3}^N$, we observe by the Cauchy-Schwarz inequality we have that
	\[A_3^N\le \int \int_{2N^{-\beta}>|x-y|} \varphi(x)\varphi(y)\frac{\sq{\E |D_{2N,1}(x)|^{2\alpha}\E |D_{N,4}(x)|^{4\alpha}\E |D_{2N,1}(y)|^{2\alpha}\E |D_{N,4}(y)|^{4\alpha}}}{\E |D_{2N,1}(x)|^{\alpha}\E|D_{N,4}(x)|^{2\alpha}\E |D_{2N,1}(y)|^{\alpha}\E |D_{N,4}(y)|^{2\alpha}}dxdy.\]
	
	By repeatedly applying Theorem \ref{main-theorem} we may write the right-hand side as
	\[\int \int_{2N^{-\beta}>|x-y|} \varphi(x)\varphi(y)\frac{\sq{\E |D_{2N,2}(x)|^{4\alpha}\E |D_{2N,2}(y)|^{4\alpha}}}{\E |D_{2N,2}(x)|^{2\alpha}\E|D_{2N,2}(y)|^{2\alpha}}dxdy(1+O(N^{-1/6})).\]
	By applying the asymptotics of \cite{Krav} (see also the proof of Proposition 6.4 in \cite{GUEGMC}) we see that \[\frac{\sq{\E |D_{2N,2}(x)|^{4\alpha}\E |D_{2N,2}(y)|^{4\alpha}}}{\E |D_{2N,2}(x)|^{2\alpha}\E|D_{2N,2}(y)|^{2\alpha}}=O(N^{2\alpha^2-\beta}),\] uniformly on the support of $\varphi$. As $\beta>2\alpha^2$,
	we see that $\lim_{N\to \infty}A_3^N=0$.
	
	We finally show that $\lim_{\epsilon\to 0}\limsup_{N\to \infty} A_{2,\epsilon}^N=0$. By repeatedly applying Proposition \ref{Main-prop-Merging-2} as before, we have that
	\[A_{2,\epsilon}^N=\int \int_{2N^{-\beta}<|x-y|<\epsilon} \varphi(x)\varphi(y)\frac{\E |D_{2N,2}(x)|^{2\alpha}|D_{2N,2}(y)|^{2\alpha}}{\E |D_{2N,2}(x)|^{2\alpha}\E|D_{2N,2}(y)|^{2\alpha}}dxdy(1+O(N^{-\min(1-\beta,1/6)}\log(N))).\]
	\noindent
	It is shown in the last line of the proof of Proposition 6.4 of \cite{GUEGMC} that
	\[\lim_{\epsilon\to 0}\limsup_{N\to \infty}\int \int_{2N^{-\beta}<|x-y|<\epsilon} \varphi(x)\varphi(y)\frac{\E |D_{2N,2}(x)|^{2\alpha}|D_{2N,2}(y)|^{2\alpha}}{\E |D_{2N,2}(x)|^{2\alpha}\E|D_{2N,2}(y)|^{2\alpha}}dxdy=0.\]
	As $\beta<1$, together these imply that $\lim_{\epsilon\to 0}\limsup_{N\to \infty} A_{2,\epsilon}^N=0$, which completes the proof of Theorem \ref{GMCtheorem}.
	
	\section{Asymptotics of Orthogonal Polynomials and Related Quantities \label{Asymptotics-Section}}
	
	In this section, we state the leading order asymptotics for orthogonal polynomials with respect to the measure $w_N$, as well as those for some related quantities. Due to the complexity of the expressions of this asymptotics (especially in the regions around $\lam_i$), we will state our expressions as entries in a matrix product. These asymptotics were obtained in our setting by \cite{GUEGMC}, building upon the work of \cite{Krav}. In both cases, they are obtained via asymptotic analysis of an associated Riemann-Hilbert problem.
	
	We note that away from the points $\{\lam_i\}_{i=1}^{m}$, these asymptotics are closely related to the classical Plancherel-Rotach asymptotics for Hermite polynomials (see \cite{szego}). In particular, Propositions \ref{Asym p_N easy} and \ref{Asym p_N Airy} coincide with the corresponding classical expressions for Hermite polynomials except with a modification to the $O(1)$-term. Around the points $\{\lam_i\}_{i=1}^m$ though (where the asymptotics are described by Proposition \ref{Asym p_N Bessel}), they are described in terms of Bessel functions. Some related expressions appear in \cite{kui}, where they give a description of the asymptotics of the Christoffel-Darboux kernel around these points in terms of the Bessel kernel.
	
	The structure of this section will be as follows. We will begin this section by recalling some relevant background and notation to present Proposition \ref{Simple-Remainder Proposition}, which is essentially Theorem 4.37 of \cite{GUEGMC} restricted to $\R$. This expression gives (uniform) asymptotics for our desired orthogonal polynomials in terms of a variety of parametric. To make use of these asymptotics, we must recall and compute the behavior of these parametrices on $\R$. The result of this translation will be our main asymptotic results, which are given by Proposition \ref{Asym p_N easy}, \ref{Asym p_N Airy}, and \ref{Asym p_N Bessel}. After this, we will give asymptotics for $\ell_i$ and $q_i$ in Proposition \ref{l and q asym}.
	
	Finally, we will provide the modifications of these results needed in the merging case in Subsection \ref{subsection-merging}. These results will essentially follow from the results of \cite{Merging,RHmerging}. The main results are Propositions \ref{Merging Error Bulk} and \ref{Merging Error Bessel}. As we are only interested in the subcritical merging regime (i.e., $N(\lam_1-\lam_2)\to \infty$) our asymptotic expressions will essentially coincide with those in the non-merging case, with the only significant difference being that the bounds on the error terms are weaker, and the domains in which they are valid are smaller.
	
	We will fix for this section a choice of $(\cal{W},\{\lam_i\}_i,\{\alpha_i\}_i)$ as in Theorem \ref{main-theorem}. We begin by recalling an analytic matrix-valued function on $\C-\R$, $Y_N(z):=Y(z;N,\cal{W},\{\lam_i\}_i,\{\alpha_i\}_i)$, defined as
	\[Y_N(z)=\begin{bmatrix}\kappa_N^{-1}p_{N}(z)&\k_N^{-1}(2\pi i)^{-1}\int \frac{p_N(y)}{y-z}w_N(y)dy\\
		-2\pi i \k_{N-1}p_{N-1}(z)&-\k_{N-1}\int \frac{p_{N-1}(y)}{y-z}w_N(y)dy\end{bmatrix},\]
	where $w_N$ is defined in (\ref{w-def}). This function occurs as the solution to the Riemann-Hilbert problem defined by the measure $w_N$, in the sense of \cite{FIK} (see Proposition 3.4 of \cite{GUEGMC}). The asymptotics of $Y_N$ are then computed by using the method of nonlinear steepest descent, pioneered in \cite{DeiftZhou}. Our primary purpose is to specialize these asymptotics to $\R$. To ease the reader in translation of the results of \cite{GUEGMC}, we note that we are in their case of $t=s=1$ and their $(k,V,\cal{T},(x_i)_{i=1}^{k},(\beta_i)_{i=1}^k)$ coincides with our $(m,2x^2,2\cal{W},(\lam_{m-i})_{i=1}^{m},(2\alpha_i)_{i=1}^{m})$. It is well known in the quadratic case that $\ell_V=\ell$ where $\ell=-1-2\log(2)$, as can easily be verified from (2.3) and (2.4) in \cite{GUEGMC}, and that $d_1(\lam)=2/\pi$.
	
	When $x\in\R$ we will use the notation $f_+(x)=\lim_{z\to x}f(z)$ where the limit is taken along sequences in $\H$, within the domain of $f$, not tangential to $\R$. We are interested in understanding the asymptotics of $(Y_N)_+$. The asymptotics of $Y_N$ are stated in terms of a related matrix-valued function $S_N$ (defined in (4.5) and (4.11) of \cite{GUEGMC}). To understand the relation between $S_N$ and $Y_N$, we recall the $3$rd and $1$st Pauli matrices defined as
	\[\sigma=\sigma_3=\begin{bmatrix}1&0\\
		0&-1\end{bmatrix},\;\;\;\sigma_1=\begin{bmatrix}0&1\\1&0\end{bmatrix}.\]
	Then for $z\in \H$, such that $\Re(z)\in \R\setminus \{\lam_1,\cdots \lam_m\}$, and $\Im(z)$ is sufficiently small (depending only on $\Re(z)$), we have that
	\[Y_N(z)=e^{N\frac{\ell}{2}\sigma}S_N(z)e^{N(g_1(z)-\ell/2)\sigma}\;\;\text{for}\;\; |\Re(z)|\ge 1,\label{S-def-1}\]
	\[Y_N(z)=e^{N\frac{\ell}{2}\sigma}S_N(z)\begin{bmatrix} 1&0\\ f_1(z)^{-1}e^{-Nh_1(z)}&1\end{bmatrix}e^{N(g_1(z)-\ell/2)\sigma}\;\;\text{for}\;\; |\Re(z)|<1,\label{S-def-2}\]
	where $g_1(z)$, $h_1(z)$ and $f_1(z)$ are defined in (4.3), (4.9) and (4.13) of \cite{GUEGMC}. These follow from Definitions 4.1 and 4.9 of \cite{GUEGMC}. We will not need the exact form of these functions, though we will need their behavior of $\R$. For this, we define the functions \[s(x)=\begin{cases}2\int_x^1\sq{1-y^2}dy;\;|x|< 1\\
		2\int_{1}^{|x|}\sq{y^2-1}dy;\;|x|\ge 1\end{cases},\label{fun-s-def}\;\;\;\omega(x)=\prod_{i=1}^{m}|x-\lam_i|^{2\alpha_i}.\]
	We observe as well that
	\[s(x)=\begin{cases}|x|\sq{x^2-1}-\arcosh(|x|);\;\;|x|\ge 1\\
		-x\sq{1-x^2}+\arccos(x);\;\;|x|<1 \end{cases}.\]
	We may now state the behavior of the above functions on the real line.
	\begin{lem}
		\label{misc-function-lemma}
		We have that for $x\in \R$
		\[(g_1)_+(x)=x^2+\ell/2+(h_1)_+(x)/2,\;\;\; (h_1)_+(x)=\begin{cases}-2s(x);\;\;x\ge 1\\ 2\pi i-2s(x);\;\; x\le -1\\ i2s(x);\;\; |x|<1 \end{cases},\]
		and $(f_1)_+(x)=e^{2\cal{W}(x)}\omega(x)$.
	\end{lem}
	\begin{proof}
		The statements for $(h_1)_+$ and $(f_1)_+$ follow routinely from their definitions (i.e. (4.9) and (4.13) of \cite{GUEGMC}). For the results on $(g_1)_+$, combining the results of (4.6) and (4.7) of \cite{GUEGMC}, we see that for $x\in (-1,1)$,
		\[(g_1)_+(x)=x^2+\ell/2+h_+(x)/2=x^2+\ell/2+is(x).\]
		Finally, we see by direct computation that for $x\ge 1$,
		\[(g_1)_+(x)=\int_{-1}^1 \frac{2}{\pi}\sq{1-\lam^2}\log(x-\lam)d\lam=x^2+\ell/2-s(x),\]
		and $(g_1)_+(-x)=(g_1)_+(x)+i\pi$.
	\end{proof}
	By applying the results of Lemma \ref{misc-function-lemma} to (\ref{S-def-1}-\ref{S-def-2}) we see that for $x\in (-1,1)\setminus\{\lam_1,\cdots, \lam_m\}$
	\[(Y_N)_+(x)=e^{N\frac{\ell}{2}\sigma}(S_N)_+(x)\begin{bmatrix}1 &0\\
		e^{-2Nis(x)-2\cal{W}(x)}\omega(x)^{-1}&1\end{bmatrix}e^{Nis(x)\sigma}e^{Nx^2\sigma},\label{paramatrix-Y-S-(-1,1)}\]
	and for $|x|\ge 1$ that
	\[(Y_N)_+(x)=e^{N\frac{\ell}{2}\sigma}(S_N)_+(x)e^{-Ns(x)\sigma}e^{Nx^2\sigma}(-1)^{NI(x<0)\sigma}.\label{paramatrix-Y-S-outside}\]
	
	We are primarily interested in the first column of $Y_N$, so noting that $w_N(x)=\omega(x)e^{-2Nx^2+2\cal{W}(x)}$, we see that for $x\in (-1,1)\setminus\{\lam_1,\cdots \lam_m\}$
	\[e^{-N\frac{\ell}{2}\sigma}\begin{bmatrix} [(Y_N)_+]_{11}(x)\\ [(Y_N)_+]_{21}(x)\end{bmatrix}=(S_N)_+(x)\begin{bmatrix}e^{Nis(x)}\\ e^{-Nis(x)-2\cal{W}(x)}\omega(x)^{-1}
	\end{bmatrix}e^{Nx^2}=\]\[(S_N)_+(x)e^{Nis(x)\sigma}e^{\cal{W}(x)\sigma}\omega(x)^{\sigma/2}\begin{bmatrix}1\\ 1\end{bmatrix} w_N(x)^{-1/2},\label{first-col-bulk}\]
	and that for $|x|\ge 1$
	\[e^{-N\frac{\ell}{2}\sigma}\begin{bmatrix} [(Y_N)_+]_{11}(x)\\ [(Y_N)_+]_{21}(x)\end{bmatrix}=((S_N)_+(x)e_1)e^{N(x^2-s(x))}(-1)^{NI(x<0)}.\label{first-col-outside}\]
	
	Now Theorem 4.37 of \cite{GUEGMC} implies the following asymptotic result, which shows that up to a small error term $R_N$, $S_N$ may be described in terms of various explicit parametrices, whose behavior on $\R$ we will specify below. Here and elsewhere, we will use the notation $\lam_0=1$ and $\lam_{m+1}=-1$.
	\begin{prop}
		\label{Simple-Remainder Proposition} There exists $\delta_0>0$, such that for $0<\delta<\delta_0$, we may write
		\[(S_N)_+(x)=(I+R_{N}(x))(P_N)_+(x),\]
		for some matrix-valued function $R_{N}(x):=R(x;\delta,N,\cal{W},\{\lam_i\}_i,\{\alpha\}_i)$, and  \[(P_N)_+(x)=\begin{cases}(P_{\lam_i})_+(x);\;x\in (\lam_i-\delta,\lam_i+\delta)\\
			(P_{\pm 1})_+(x);\;x\in (\pm 1-\delta,\pm 1+\delta)\\
			(P_{\infty})_+(x);\;\text{otherwise}\end{cases},\]
		where here $P_{\infty}$,$P_{\lam_i}$, and $P_{\pm 1}$ are certain matrix-valued functions defined in Definition 4.12, 4.21, and 4.28 of \cite{GUEGMC}, respectively. Then, for any $l\ge 0$, we have that $R^{(l)}_N(x)=O(N^{-1})$ uniformly for $x\in\R\setminus \bigcup_{i=0}^{m+1}\{\lam_i-\delta,\lam_i+\delta\}$. Moreover for any choice of compact subset of $\{\lam_i\}_{i=1}^{m}\in \{(\lam_1,\cdots \lam_m)\in (-1,1)^m:\lam_1>\cdots >\lam_m\}$, we may choose $\delta>0$ small enough that $R^{(l)}_N(x)=O(N^{-1})$ uniformly
		in both $x\in \R\setminus \cup_{i=0}^{m+1}\{\lam_i-\delta,\lam_i+\delta\}$ and $\{\lam_i\}_{i=1}^{m}$ in our chosen compact set.
	\end{prop}
	\begin{remark}
		In the statement of Theorem 4.37 in \cite{GUEGMC}, the error bounds are only stated for $R_N$ and $R'_N$. On the other hand, the method of proof used to establish the error bounds on $R'_N$ extends trivially to higher derivatives.
	\end{remark}
	We note as well that all of these parametrices are $N$-dependant, though as we will not use them in our final asymptotic expressions, we omit this from the notation. We will compute the values of $P_{\infty}$, $P_{\pm 1}$, and $P_{\lam_i}$ on $\R$ below, though before beginning this computation we explain how to obtain results for the full range of polynomials $p_{N-k}$ for $k\in \Z$ fixed from Proposition \ref{Simple-Remainder Proposition} instead of only $(p_{N},p_{N-1})$. As the polynomials $p_{N-k}(x):=p_{N-k}(x;N,\cal{W},\{\lam_i\}_i,\{\alpha_i\}_i)$ themselves depend on the $N$-dependant measure $w_N$, we observe that the $(N-k)$-case of Proposition \ref{Simple-Remainder Proposition} does not precisely describe an asymptotic for $p_{N-k}$. On the other hand, we observe that if we define $\eta_k=\eta_{N,k}=\sq{(N-k)/N}$, then for $N>k$, and $\cal{A}=\sum_{i=1}^{m}\alpha_i$, the polynomial
	\[\eta_k^{\cal{A}+1/2}p_{N-k}(\eta_kx;N,\cal{W},\{\lam_i\}_i,\{\alpha_i\}_i)\label{to-iden}\]
	coincides with the $(N-k)$-th orthogonal polynomial with respect to the rescaled measure
	\[w_{N,k}(x)=w_{N,k}(x;\cal{W},\{\lam_i\}_i,\{\alpha_i\}_i)=w_{N}(\eta x)\eta^{-2\cal{A}}=e^{2\cal{W}(\eta_k x)}e^{-2(N-k)x^2}\prod_{i=1}^{m}|x-\eta_k^{-1}\lam_i|^{2\alpha_i}.\]
	If we denote $\cal{W}_{N,k}(x)=\cal{W}(\eta_kx)$, then we see that
	\[w_{N,k}(x;\cal{W},\{\lam_i\}_i,\{\alpha_i\}_i)=w_{N-k}(x;\cal{W}_{N,k},\{\eta_k^{-1}\lam_i\}_i,\{\alpha_i\}_i).\]
	In particular, we may apply Proposition \ref{Simple-Remainder Proposition} to $w_{N,k}$ to obtain asymptotics for $p_{N-k}$ defined with respect to $w_N$, though with a number of scaling factors of $\eta_k$ appearing. For clarity of the notation, we will reserve the notations $P_{\lam_i}$,$P_{\infty}$,$P_{\pm 1}$ for the parametrices resulting from the choice of parameters $(N,\cal{W},\{\lam_i\}_i,\{\alpha_i\}_i)$ and will take care to explicitly specify when objects are considered which are defined with respect to the measure $w_{N,k}$.
	
	Lastly, to account for the factors of $\eta_k$ appearing in the following asymptotics, we will have repeated use for the following result, which follows from an application of Taylor's Theorem.
	\begin{lem}
		\label{sk expansion lem}
		For fixed $k\in \Z$, we define $s_{N,k}(x)=s(\eta_k^{-1}x)$. Then if one defined functions $r_{N}$ such that
		\[(N-k)s_{N,k}(x)=Ns(x)-k\arcos(x)+r_{N}(x) \;\;\text{for}\;\; |x|<1,\]
		\[(N-k)s_{N,k}(x)=Ns(x)+k\arcosh(|x|)+r_{N}(x) \;\; \text{for}\;\; |x|> 1,\]
		then for any $l\ge 0$, we have that $r_{N}^{(l)}(x)=O(N^{-1})$ uniformly over compact subsets of $x\in \R\setminus\{\pm 1\}$.
	\end{lem}
	
	We are now ready to begin stating our asymptotic expressions, beginning with those for some leading-order coefficients of $p_{N-k}$ specified in (\ref{ThreeTerm}).
	
	\begin{lem}
		\label{Coefficient Lemma New}
		For fixed $k\in \Z$, we have that
		\[\kappa_{N-k}^2=\pi^{-1}e^{-(N-k)\ell+k}D_{\infty}^{-2}(1+O(N^{-1})),\;\;\;\;\; \beta_{N+1-k}=O(1),\label{constant-asymptotics-new}\]
		where here $D_\infty=2^{-\cal{A}}\exp(\frac{1}{\pi}\int_{-1}^1\frac{\cal{W}(u)}{\sq{1-u^2}}du)$. Moreover the error terms are uniform over compact subsets of $\{\lam_i\}_{i=1}^{m}\in \{(\lam_1,\cdots \lam_m)\in (-1,1)^m:\lam_1>\cdots >\lam_m\}$.
	\end{lem}
	
	\begin{remark}
		\label{Remark-coefficents-new}
		We observe from Lemma \ref{Coefficient Lemma New} that
		\[D_{\infty}^{-\sigma}e^{-N\frac{\ell}{2}\sigma}\begin{bmatrix}\kappa_{N}^{-1}p_{N}(x)\\-2\pi i\kappa_{N-1}p_{N-1}(x)\end{bmatrix}=(I+O(N^{-1}))\begin{bmatrix}\pi^{1/2}p_{N}(x)\\
			-i\pi^{1/2}p_{N-1}(x)\end{bmatrix},\label{coefficent-new-eqn}\]
		where the error term in (\ref{coefficent-new-eqn}) is independent of $x$ and uniform in $\{\lam_i\}_{i=1}^{m}$ in the same sense as Lemma \ref{Coefficient Lemma New}.
	\end{remark}
	
	For notational clarity, we will delay the proof of Lemma \ref{Coefficient Lemma New} to after the statement of Proposition \ref{Asym p_N easy} below. Now we define functions
	\[D(x)=\begin{cases}\exp(i[-\cal{A}\arcos(x)+\frac{\sq{1-x^2}}{\pi}\pv \int_{-1}^{1}\frac{\cal{W}(u)}{\sq{1-u^2}}\frac{1}{x-u}du]);\;\;\;|x|<1\\
		\exp([-\cal{A}\arcosh(x)+\frac{\sq{x^2-1}}{\pi}\int_{-1}^{1}\frac{\cal{W}(u)}{\sq{1-u^2}}\frac{1}{x-u}du]);\;\;\;x> 1\\
		\exp([-\cal{A}\arcosh(-x)-\frac{\sq{x^2-1}}{\pi}\int_{-1}^{1}\frac{\cal{W}(u)}{\sq{1-u^2}}\frac{1}{x-u}du]);\;\;\;x<-1\end{cases}.\]
	We introduce a fixed matrix-valued function $A(x)$, defined for $|x|<1$ by
	\[A(x)=\frac{1}{2(1-x^2)^{1/4}}\begin{bmatrix}e^{-i\pi/4}\sq{1+x}+e^{i\pi/4}\sq{1-x}& i[e^{-i\pi/4}\sq{1+x}-e^{i\pi/4}\sq{1-x}]\\
		-i[e^{-i\pi/4}\sq{1+x}-e^{i\pi/4}\sq{1-x}]&e^{-i\pi/4}\sq{1+x}+e^{i\pi/4}\sq{1-x}\end{bmatrix} \label{B in Bulk},\]
	and on $|x|>1$ as
	\[A(x)=\frac{1}{2(x^2-1)^{1/4}}\begin{bmatrix}\sq{|1+x|}+\sq{|1-x|}&i[\sq{|1+x|}-\sq{|1-x|}]\\
		-i[\sq{|1+x|}-\sq{|1-x|}]&\sq{|1+x|}+\sq{|1-x|}\end{bmatrix} \label{B in non-Bulk}.\]
	We now define \[T_{\infty}(x)=A(x)D(x)^{-\sigma}.\label{Tinf-def}\] Lastly we define an additional function \[T_{N,k,\infty}(x)=T_{\infty}(\eta_k^{-1}x;\cal{W}_{N,k},\{\eta_k^{-1}\lam_i\}_i,\{\alpha_i\}_i).\]
	
	We may now state the asymptotics away from the points $\{\pm 1,\lam_1,\cdots \lam_m\}$.
	\begin{prop}
		\label{Asym p_N easy}
		For $\delta>0$ small enough, and any choice of $k\in \Z$, we may define $R_N(x)$ such that for any choice of $0\le i\le m$, and $x\in (\lam_{i+1}+\delta,\lam_{i}-\delta)$ we have that
		\[\begin{bmatrix}\pi^{1/2}p_{N-k}(x)\\
			-i\pi^{1/2}p_{N-k-1}(x)\end{bmatrix}w_N(x)^{1/2}=(I+R_N(x))T_{\infty}(x)e^{[-ik\arcos(x)-i\pi\sum_{k=1}^{i}\alpha_k]\sigma}\begin{bmatrix}e^{iNs(x)}\\ e^{-iNs(x)} \end{bmatrix} \label{Bulk Asymptotic},\]
		and for $|x|>1+\delta$ we have that
		\[\begin{bmatrix}\pi^{1/2}p_{N-k}(x)\\
			-i\pi^{1/2}p_{N-k-1}(x)\end{bmatrix}w_N(x)^{1/2}=[(I+R_N(x))T_{N,k,\infty}(x)e_1]e^{-Ns_{N,k}(x)}(-1)^{(N-k)I(x<0)}\label{Outside Asymptotic}.\]
		Then for any $l\ge 0$ and any choice of compact subset of $\{\lam_i\}_{i=1}^{m}\in \{(\lam_1,\cdots \lam_m)\in (-1,1)^m:\lam_1>\cdots >\lam_m\}$, we may choose $\delta_0>0$ small enough that for $0<\delta<\delta_0$ we have that $R^{(l)}_N(x)=O(N^{-1})$ uniformly in both $x\in \R\setminus\bigcup_{i=0}^{m+1}(\lam_i-\delta,\lam_{i}+\delta)$ and $\{\lam_i\}_{i=1}^{m}$ in our chosen compact set.
	\end{prop}
	
	In the proof of both of these results, we will need to understand the parametric $P_{\infty}$ given in Definition 4.12 of \cite{GUEGMC}. While we will not recall the definitions for each of their functions, we will recall some of their real limits for the ease of the reader. Namely the functions $r$, $a$, and $\cal{D}_1$ defined in (4.19), (4.20) and (4.21) satisfy for $x\in \R$
	\[r_+(x)=\begin{cases}\sign(x)\sq{x^2-1};\;\; |x|\ge 1\\
		i\sq{1-x^2};\;\; |x|<1
	\end{cases},\;\; a_+(x)=\frac{|x-1|}{|x+1|}\begin{cases}1\;\; |x|\ge 1\\
		e^{i\pi/4};\;\; |x|<1
	\end{cases},\]
	\[(\cal{D}_1)_+(x)=D(x)\omega(x)^{1/2}\begin{cases}1;\;\; |x|\ge 1\\
		e^{\cal{W}(x)}e^{-\pi i \sum_{l=1}^{m}\alpha_l I(x<\lam_l)};\;\; |x|<1
	\end{cases},\]
	where in the last case for $\cal{D}_1$ we have used the Sokhotski–Plemelj formula: for smooth $f$ and $a<0<b$ we have that
	\[\lim_{\epsilon\to 0^+}\int_a^b\frac{f(x)}{x+i\epsilon}dx=-i\pi f(0)+\pv \int_a^b\frac{f(x)}{x}dx.\]
	From this we can see that for $x\in (-1,1)\setminus\{\lam_1,\cdots, \lam_m\}$,
	\[(P_{\infty})_+(x)=D_{\infty}^{\sigma}T_{\infty}(x)e^{-i\pi\sum_{k=1}^{m}I(x<\lam_k)\alpha_k\sigma}e^{-\cal{W}(x)\sigma}\omega(x)^{-\sigma/2}, \label{Paramatrix-(-1,1)}\]
	and for $x\in \R\setminus(-1,1)$ we have that
	\[(P_{\infty})_+(x)=D_{\infty}^{\sigma}T_{\infty}(x)\omega(x)^{-\sigma/2}. \label{Paramatrix infinity}\]
	
	\begin{proof}[Proof of Lemma \ref{Coefficient Lemma New}]
		The proof will be similar to the proof of the $\cal{W}=0$ case given in Section 4 of \cite{Krav} (one may also see the proof of this case given in Section 5 of \cite{GUEAsym}). We will first provide a proof in the special case of $k=1$. We observe that we may write
		\[\kappa_{N-1}^2=\frac{1}{-2\pi i}\lim_{x\to \infty}\frac{[(Y_N)_+]_{21}(x)}{x^{N-1}},\;\;\;\beta_{N}=\lim_{x\to \infty}\frac{[(Y_N)_+]_{11}(x)-x^N}{x^{N-1}}.\]
		Now employing Theorem \ref{Simple-Remainder Proposition} in view of (\ref{first-col-outside}) and (\ref{Paramatrix infinity}), we see that for any small $\delta>0$, $x>1+\delta$, and $j=1,2$
		\[e^{N\ell/2(-1)^{j}}[(Y_N)_+]_{j1}(x)=[(I+R_N(x))D_{\infty}^{\sigma}T_{\infty}(x)\omega(x)^{-\sigma/2}]_{j1}e^{N(x^2-s(x))},\label{ignore-throw}\]
		where here $R_N(x)=O(N^{-1})$ uniformly in the sense of Theorem \ref{Simple-Remainder Proposition}. Routine application of Taylor's Theorem shows that
		\[s(x)=x\sq{x^2-1}-\arcosh(x)=x^2+\ell/2-\log(x)+O(x^{-2}), \;\;\;\omega(x)^{-1/2}=x^{-\cal{A}}(1+\sum_{i=1}^{m}\frac{\alpha_i\lam_i}{x}+O(x^{-2})),\]
		\[A(x)=I+\frac{1}{2x}\begin{bmatrix}0&i\\-i&0\end{bmatrix}+O(x^{-2}),\;\;\; D(x)=x^{-\cal{A}}D_{\infty}(1+\frac{1}{\pi x}\int_{-1}^1 \frac{\cal{W}(u)u}{\sq{1-u^2}}du+O(x^{-2})).\]
		From these results, we see that for $j=1,2$,
		\[[D_{\infty}^{\sigma}T_{\infty}(x)\omega(x)^{-\sigma/2}]_{j1}e^{N(x^2-s(x))}=D_{\infty}^{-(-1)^j}[A(x)]_{j1}D(x)^{-1}\omega(x)^{-1/2}e^{N(x^2-s(x))}=\]
		\[D_{\infty}^{-(-1)^j-1}\bigg[I(1+\frac{1}{x}\bigg[\sum_{i=1}^{m}\alpha_i\lam_i-\frac{1}{\pi}\int_{-1}^1 \frac{\cal{W}(u)u}{\sq{1-u^2}}du\bigg])+\frac{1}{2x}\begin{bmatrix}0&i\\-i&0\end{bmatrix}+O(x^{-2})\bigg]_{j1}x^Ne^{-N\frac{\ell}{2}}=\]
		\[e^{-N\frac{\ell}{2}}D_{\infty}^{-(-1)^j-1}\begin{bmatrix}1+x^{-1}(\sum_{i=1}^{m}\alpha_i\lam_i-\frac{1}{\pi}\int \frac{\cal{W}(u)u}{\sq{1-u^2}}du)\\-x^{-1}i/2\end{bmatrix}_jx^N+O(x^{N-2}).\label{throw-eqn2}\]
		We
		now recall from (4.58) of \cite{GUEGMC} that $R_N(x)=O(|x|^{-1})$. In fact, we additionally have that $R_N(x)=O(N^{-1}|1+x|^{-1})$, uniformly in $x\in \R$, and compact subsets of $\{\lam_i\}_{i=1}^{m}$, as in Proposition \ref{Simple-Remainder Proposition}. This is not explicitly stated in \cite{GUEGMC}, but follows from a standard contour deformation argument using the uniform decay of the jump matrix ($\Delta=J_R-I$ in \cite{GUEGMC}) for large $|z|$, as in the $\cal{W}=0$ case discussed in Section 4 of \cite{Krav} (one also may consult the detailed argument given in the proof of Proposition 7.5 of \cite{clae} in a related case).
		
		Employing this asymptotic, (\ref{throw-eqn2}), and (\ref{ignore-throw}), we see that
		\[\kappa_{N-1}^2=\frac{1}{-2\pi i}\lim_{x\to \infty}\frac{[(Y_N)_+]_{21}(x)}{x^{N-1}}=\frac{1}{4\pi}e^{-N\ell }D_{\infty}^{-2}(1+O(N^{-1}))=\pi^{-1}e^{-(N-1)\ell+1 }D_{\infty}^{-2}(1+O(N^{-1})).\]
		Similarly, we derive that
		\[\beta_N=\lim_{x\to \infty}\frac{[(Y_N)_+]_{11}(x)-x^N}{x^{N-1}}=\sum_{i=1}^{m}\alpha_i\lam_i-\frac{1}{\pi}\int_{-1}^1\frac{\cal{W}(u)u}{\sq{1-u^2}}du+O(N^{-1})=O(1).\]
		
		This completes the proof of the $k=1$ case. To complete the general case, we see that
		\[\kappa_{N-k-1}(N,\cal{W},\{\lam_i\}_i,\{\alpha_i\}_i)=\eta_k^{-\cal{A}-1/2-(N-k-1)}\kappa_{N-k-1}(N-k,\cal{W}_{N,k},\{\eta_k^{-1}\lam_i\}_i,\{\alpha_i\}_i),\]
		\[\beta_{N-k}(N,\cal{W},\{\lam_i\}_i,\{\alpha_i\}_i)=\eta_k\beta_{N-k}(N-k,\cal{W}_{N,k},\{\eta_k^{-1}\lam_i\}_i,\{\alpha_i\}_i).\]
		Observing that $\eta_k^{-\cal{A}-1/2-(N-k-1)}=e^{k/2}(1+O(N^{-1}))$ and that $D_{\infty}(\cal{W}_{N,k},\{\eta_k^{-1}\lam_i\}_i,\{\alpha_i\}_i)=D_{\infty}(1+O(N^{-1}))$, we obtain the general case by applying the $k=1$ case to $w_{N,k}$.
	\end{proof}
	Before continuing on to the proof of Proposition \ref{Asym p_N easy}, we record the following useful remark.
	\begin{remark}
		\label{det-remark-T}
		We observe as $T_{\infty}(x)$ is smooth on $\R\setminus\{\pm 1\}$ and independent of $\{\lam_i\}_{i=1}^{m}$ and $N$, we may show that for fixed $k\in \Z$, if we define $\bar{R}_N(x)$ so that $T_{N,k,\infty}(x)=T_{\infty}(x)+\bar{R}_N(x)$,
		then for $l\in \N$, $\bar{R}_N^{(l)}(x)=O(N^{-1})$ uniformly over both compact subsets of $x\in \R\setminus\{\pm 1\}$ (and of course still independently of $\{\lam_i\}_{i=1}^{m}$). On the other hand, taking $R_{N}(x)=\bar{R}_{N}(x)T_{\infty}^{-1}(x)$, we may write
		\[T_{N,k,\infty}(x)=(I+R_N(x))T_{\infty}(x).\]
		Now as one may check that $\det(T_{\infty}(x))=1$, we see that the entries of $T_{\infty}^{-1}(x)$ are also smooth on $\R\setminus\{\pm 1\}$, so we see that $R_N$ satisfies the same estimates as $\bar{R}_N$.
	\end{remark}
	\begin{proof}[Proof of Proposition \ref{Asym p_N easy}]
		We will take $\delta=\delta_0/2$, where $\delta_0$ is the constant given in Proposition \ref{Simple-Remainder Proposition}. We will begin with the proof in the case that $k=0$. By Proposition \ref{Simple-Remainder Proposition}, (\ref{Paramatrix-(-1,1)}), and (\ref{first-col-bulk}) we see that for $x\in (\lam_{i+1}+\delta,\lam_{i}-\delta)$
		\[D_{\infty}^{-\sigma}e^{-N\frac{\ell}{2}\sigma}\begin{bmatrix}\kappa_N^{-1}p_{N}(x)\\-2\pi i\kappa_{N-1}p_{N-1}(x)\end{bmatrix}w_N(x)^{1/2}=D_{\infty}^{-\sigma}(I+\bar{R}_N(x))(P_\infty)_+(x)e^{Nis(x)\sigma}e^{\cal{W}(x)\sigma}\omega(x)^{\sigma/2}\begin{bmatrix}1\\ 1\end{bmatrix}=\]
		\[D_{\infty}^{-\sigma}(I+\bar{R}_N(x))D_{\infty}^{\sigma}T_{\infty}(x)e^{-i\pi\sum_{k=1}^{i}\alpha_k\sigma}\begin{bmatrix}e^{Nis(x)}\\ e^{-Nis(x)}\end{bmatrix}=(I+R_N(x))T_{\infty}(x)e^{-i\pi\sum_{k=1}^{i}\alpha_k\sigma}\begin{bmatrix}e^{Nis(x)}\\ e^{-Nis(x)}\end{bmatrix},\]
		where $\bar{R}_N(x)$ is the error term from Proposition \ref{Simple-Remainder Proposition} and $R_N(x)=D_{\infty}^{-\sigma}\bar{R}_N(x)D_{\infty}^{\sigma}$. By Remark \ref{Remark-coefficents-new}, we see that modifying $R_N(x)$ by the constant matrix $I+O(N^{-1})$ appearing in (\ref{coefficent-new-eqn}), we obtain (\ref{Bulk Asymptotic}) in the $k=0$ case. Now using the fact that the coefficients $D_{\infty}$ are $N$-independent and independent of $\{\lam_i\}_i$, we see that the function $R_N$ satisfies the same estimates as $\bar{R}_N$, as stated in Proposition \ref{Simple-Remainder Proposition}. In particular, $R_N^{(l)}(x)=O(N^{-1})$ uniformly in the sense needed in Proposition \ref{Asym p_N easy}. Similarly, using (\ref{Paramatrix infinity}) and (\ref{first-col-outside}) and the same choice of $R_N$, we obtain (\ref{Outside Asymptotic}) in the case of $k=0$. Together these complete the $k=0$ case.
		
		To establish the case of $k\neq 0$, with notation as above, we see that by Remark \ref{det-remark-T}, Remark \ref{Remark-coefficents-new}, and the $k=0$ case applied to the measure $w_{N,k}$, we see for $x\in (\lam_{i+1}+\delta,\lam_{i}-\delta)$,
		
		Now recalling $r_{N}(x)$ from Lemma \ref{sk expansion lem} we see that
		\[T_{\infty}(x)e^{-i\pi\sum_{k=1}^{i}\alpha_k\sigma}\begin{bmatrix}e^{i(N-k)s_{N,k}(x)}\\ e^{-i(N-k)s_{N,k}(x)}\end{bmatrix}=(I+\hat{R}_N(x))T_{\infty}(x)e^{[-ik\arcos(x)-i\pi\sum_{k=1}^{i}\alpha_k]\sigma}\begin{bmatrix}e^{iNs(x)}\\ e^{-iNs(x)}\end{bmatrix},\]
		where $\hat{R}_N(x)=(I+R_N(x))T_{\infty}(x)e^{r_{N}(x)\sigma}T_{\infty}(x)^{-1}-I$. As $r_N(x)=O(N^{-1})$, $T_{\infty}(x)$ is smooth on $(-1,1)$, and $\det(T_{\infty}(x))=1$, we see from the estimates on $R_N$ that $\hat{R}_N^{(l)}(x)=O(N^{-1})$ for $l\ge 0$. Together, these results complete the proof of (\ref{Bulk Asymptotic}). The general case of (\ref{Outside Asymptotic}) follows more simply from Remark \ref{Remark-coefficents-new} and a similar appeal to the $k=0$ case.
	\end{proof}
	
	\begin{remark}
		\label{exp-decay-s}
		for any $\delta>0$, we may find $c>0$, such that $s(x)>2cx^2$ for $x\in \R\setminus [-1-\delta,1+\delta]$. Noting that $T_{\infty}$ grows only polynomially in $x$, we see that the asymptotic (\ref{Paramatrix infinity}) implies that for any choice of $k\in \Z$, $l\ge 0$, and $|x|>1+\delta$, we have that
		\[p_{N-k}^{(l)}(x)w_N(x)^{1/2}=O(e^{-Ncx^2}),\]
		and additionally, for any compact subset of $\{\lam_i\}_{i=1}^{m}\in \{(\lam_1,\cdots \lam_m)\in (-1,1)^m:\lam_1>\cdots >\lam_m\}$, we may choose $c>0$ small enough that this error bound is uniform in $|x|>1+\delta$ and our chosen compact subset of $\{\lam_i\}_{i=1}^{m}$.
	\end{remark}
	
	We will now turn to describe the asymptotics of the orthogonal polynomials around the points $\pm 1$. We define functions
	\[f_{1}(x)=\sign(x-1)\left(\frac{3}{2}|s(x)|\right)^{2/3},\;\;\;f_{-1}(x)=f_1(-x).\]
	It is routine to verify that $f_{\pm 1}$ are defined and smooth in a neighborhood around $\pm 1$ (see also Definition 4.25 of \cite{GUEGMC}). Moreover, one may verify that $f_{\pm 1}'(\pm 1)=\pm 2$ and that $f_{\pm 1}(\pm 1)=0$. We also define $f_{N,k,\pm1}(z)=f_{\pm 1}(\eta_k^{-1} z)$ as before. We will also need additional matrix valued functions
	\[T_{1}(x)=\sq{\pi} e^{-i\frac{\pi}{4}}T_{\infty}(x)e^{\cal{W}(x)I(x\ge 1)\sigma}e^{i\frac{\pi}{4} \sigma}\begin{bmatrix}1&-1\\
		1&1\end{bmatrix}e^{i\sigma\frac{\pi}{4}I(x<1)}|f_{1}(x)|^{\sigma/4},\label{eqn:T1-def}\]\[ T_{-1}(x)=\sq{\pi} e^{-i\frac{\pi}{4}}T_{\infty}(x)e^{\cal{W}(x)I(x\le -1)\sigma}e^{-i\pi \cal{A}\sigma}e^{i\frac{\pi}{4}\sigma}\begin{bmatrix}1&1\\
		-1&1\end{bmatrix}e^{i\sigma\frac{\pi}{4}I(x>-1)}|f_{-1}(x)|^{\sigma/4}.\]
	We are now ready to state the asymptotics around $\pm 1$. In the following proposition, $\Ai$ denotes the Airy function.
	\begin{prop}
		\label{Asym p_N Airy}
		For $\delta>0$ small enough, and any choice of $k\in \Z$, we may find $R_N(x)$ such that for $x\in (1-\delta,1+\delta)$ we have that
		\[\begin{bmatrix}\pi^{1/2}p_{N-k}(x)\\
			-i\pi^{1/2}p_{N-k-1}(x)\end{bmatrix}w_N(x)^{1/2}=(I+R_N(x))T_{1}(x)\begin{bmatrix}
			(N-k)^{1/6}\Ai((N-k)^{2/3}f_{N,k,1}(x))\\
			(N-k)^{-1/6}\Ai'((N-k)^{2/3}f_{N,k,1}(x))
		\end{bmatrix}\label{pN asym on 1},\]
		and for $x\in (-1-\delta,-1+\delta)$ we have that \[\begin{bmatrix}\pi^{1/2}p_{N-k}(x)\\
			-i\pi^{1/2}p_{N-k-1}(x)\end{bmatrix}w_N(x)^{1/2}=(I+R_N(x))T_{-1}(x)\begin{bmatrix}
			(N-k)^{1/6}\Ai((N-k)^{2/3}f_{N,k,-1}(x))\\
			-(N-k)^{-1/6}\Ai'((N-k)^{2/3}f_{N,k,-1}(x))
		\end{bmatrix}(-1)^{N-k}\label{pN asym on -1}.\]
		Additionally, for any $l\ge 0$, and any choice of compact subset of $\{\lam_i\}_{i=1}^{m}\in \{(\lam_1,\cdots \lam_m)\in (-1,1)^m:\lam_1>\cdots >\lam_m\}$, we may choose $\delta_0>0$ such that for $0<\delta<\delta_0$ we have that $R^{(l)}_N(x)=O(N^{-1})$ uniformly in both $x\in (-1-\delta,-1+\delta)\cup (1-\delta,1+\delta)$ and $\{\lam_i\}_{i=1}^{m}$ in our chosen compact set.
	\end{prop}
	
	To prove this, we will have to recall the form of $P_{\pm 1}$. We begin with the case of $P_{1}$, which is given in Definition 4.28 of \cite{GUEGMC}. We observe that for the function $\xi_1(z) $ defined in (4.42) of \cite{GUEGMC}, we have that $(\xi_{1})_+(x)=N^{2/3}f_1(x)$, and additionally we have that the function defined in (4.28) of \cite{GUEGMC} has $(\phi_1)_+(x)=(h_1)_+(x)/2$. Employing these observations and Lemma \ref{misc-function-lemma} we derive from Definition 4.28 of \cite{GUEGMC} that for $x\in (1-\delta,1+\delta)$
	\[(P_{1})_+(x)=F_+(x)Q_+(Nf_1(x))e^{(I(x\ge 1)-iI(x<1))s(x)\sigma}\omega(x)^{-\sigma/2}e^{-\cal{W}(x)\sigma},\label{P+1-exp}\]
	\[F_+(x)=(P_{\infty})_+(x)e^{\cal{W}(x)\sigma}\omega(x)^{\sigma/2}e^{i\frac{\pi}{4}\sigma}\sq{\pi}\begin{bmatrix}1&-1\\ 1&1 \end{bmatrix}e^{i\sigma\frac{\pi}{4}I(x<1)}|f_1(x)|^{\sigma/4}N^{\sigma/6}e^{-i\pi/12},\]
	\[Q_+(\zeta)=\begin{bmatrix}\Ai(\zeta)& \Ai(\omega^2\zeta)\\
		\Ai'(\zeta)&\omega^2 \Ai'(\omega^2 \zeta)\end{bmatrix}e^{-\pi i \sigma/6};\;\;\zeta> 0,\]
	\[Q_+(\zeta)=\begin{bmatrix}\Ai(\zeta)& \Ai(\omega^2\zeta)\\
		\Ai'(\zeta)&\omega^2 \Ai'(\omega^2 \zeta)\end{bmatrix}e^{-\pi i \sigma/6}\begin{bmatrix}1&0\\-1&1\end{bmatrix};\;\;\zeta<0,\]
	where here $\omega=e^{2\pi i/3}$. Now employing (\ref{Paramatrix-(-1,1)}) and (\ref{Paramatrix infinity}) we see that
	\[F_+(x)=D_{\infty}^{\sigma}T_{\infty}(x)e^{\cal{W}(x)I(x\ge
		1)}e^{i\frac{\pi}{4}\sigma}\sq{\pi}\begin{bmatrix}1&-1\\ 1&1 \end{bmatrix}e^{i\sigma\frac{\pi}{4}I(x<1)}|f_1(x)|^{\sigma/4}N^{\sigma/6}e^{-i\pi/12}=D_{\infty}^{\sigma}e^{i\pi/6}T_1(x)N^{\sigma/6}.\label{F+1-new}\]
	
	We now proceed to the $-1$ case. While not defined in \cite{GUEGMC}, the definition is a similar modification of the equation obtained in the $\cal{W}=0$ case considered in (80) of \cite{Krav} (or similarly, from the $+1$ case for the reversed measure $w_N(-x,\cal{W}(-x),\{-\lam_i\}_{i=1}^{m},\{\alpha_i\}_{i=1}^{m}$). In particular, we obtain by a similar argument that for $x\in (-1-\delta,-1+\delta)$
	\[(P_{-1})_+(x)=\hat{F}_+(x)\sigma Q_-(Nf_{-1}(x))\sigma e^{(I(x\le -1)-iI(x>-1))s(x)\sigma}\omega(x)^{-\sigma/2}e^{-\cal{W}(x)\sigma}(-1)^{N\sigma},\label{P-1-exp}\]
	\[\bar{F}_+(x)=D_{\infty}^{\sigma}e^{i\frac{\pi}{6}}T_{-1}(x)N^{\sigma/6},\label{F-1-new}\]
	\[\sigma Q_-(\zeta)\sigma  =\begin{bmatrix}\Ai(\zeta)& \omega^2 \Ai(\omega\zeta)\\
		-\Ai'(\zeta)&-\Ai'(\omega^2 \zeta)\end{bmatrix}e^{-\pi i \sigma/6};\;\zeta\ge 0,\]
	\[\sigma Q_-(\zeta)\sigma =\begin{bmatrix}\Ai(\zeta)& \omega^2 \Ai(\omega\zeta)\\
		-\Ai'(\zeta)&-\Ai'(\omega^2 \zeta)\end{bmatrix}e^{-\pi i \sigma/6}\begin{bmatrix}1&0\\-1&1\end{bmatrix};\;\zeta<0.\]
	
	\begin{remark}
		\label{det-remark-airy}
		We observe that $F(z)$ ($\bar{F}(z)$) is analytic in a neighborhood around $1$ ($-1$) respectively (see Lemma 4.30 of \cite{GUEGMC}). In particular, from (\ref{F+1-new}) and (\ref{F-1-new}) we see that $T_{\pm 1}(z)$ is as well. We note that $T_{\pm 1}$ is independant of $\{\lam_i\}_i$ and $N$. As we also have $|\det(T_{\pm 1}(x))|=4\pi$, the same aurgument as in Remark \ref{det-remark-T} shows that for $k\in \Z$, there is $R_N(x)$ such that for $x\in (\pm 1-\delta,\pm 1+\delta)$
		\[T_{\pm 1}(\eta_k^{-1}x;\cal{W}_{N,k},\{\eta_k^{-1}\lam_i\}_i,\{\alpha_i\}_i)=(I+R_N(x))T_{\pm 1}(x),\]
		where for any $\ell\ge 0$, $R_N^{(\ell)}(x)=O(N^{-1})$ uniformly in both $x\in (\pm 1-\delta,\pm 1+\delta)$ (and again independently of the choice of $\{\lam_i\}_{i=1}^{m}$).
	\end{remark}
	
	\begin{proof}[Proof of Proposition \ref{Asym p_N Airy}]
		The proof is essentially the same as that of Proposition \ref{Asym p_N easy}. In particular, employing Proposition \ref{Simple-Remainder Proposition}, (\ref{Paramatrix-(-1,1)}), (\ref{first-col-bulk}), (\ref{first-col-outside}) and Remark \ref{Remark-coefficents-new} as in the proof of Proposition \ref{Asym p_N easy}, we obtain the $k=0$ case routinely from the expansions for $P_{\pm 1}$ given by (\ref{P+1-exp}), (\ref{P-1-exp}), (\ref{F+1-new}), and (\ref{F-1-new}). The proof of the arbitrary $k$ follows similarly by employing Remark \ref{det-remark-airy}.
	\end{proof}
	
	Finally, we will state the asymptotics in the region around the points $\{\lam_1,\cdots,\lam_m\}$. Define for $x\in (-1,1)$, the function $f_{x}(z)=s(x)-s(z)=2\int_x^z\sq{1-y^2}dy$. This function is smooth and invertible in a neighborhood of $x$, and we see that $f_x'(x)=2\sq{1-x^2}$. As before, we define $f_{N,k,x}(z)=f_{\eta_k^{-1}x}(\eta_k^{-1}z)$. We define a matrix-valued function for $1\le j\le m$ by
	\[T_{N,k,\lam_j}(x)=e^{-i\frac{\pi}{4}}T_{\infty}(x)e^{i\frac{\rho_{N,j,k}}{2}\sigma}e^{-i\frac{\pi}{4}\sigma}\frac{1}{\sq{2}}\begin{bmatrix}1&i\\i&1\end{bmatrix},\label{T_lam-def}\]
	where $\rho_{N,j,k}$ denotes
	\[\rho_{N,j,k}=2(N-k)s_{N,k}(\lam_j)-\pi \alpha_j-2\pi \sum_{i=1}^{j-1}\alpha_i.\]
	For simplicity, we will denote $T_{N,0,\lam_j}(x)=T_{N,\lam_j}(x)$ and $\rho_{N,j,0}=\rho_{N,j}$.
	\begin{remark}
		\label{T_lam-size-remark}
		Unlike $T_{\infty}(x)$ and $T_{\pm 1}(x)$, the function $T_{N,k,\lam_j}(x)$ depends on $N$. On the other hand, this is only through the factor $\exp(i\rho_{N,j,k}\sigma/2)$, whose norm is $N$-independent. Thus by Remark \ref{det-remark-T} we see that $T_{N,k,\lam_j}(x)$ is smooth on $(-1,1)$, and that for any $\delta>0$, and any $l\ge 0$, $T_{N,k,\lam_j}^{(l)}(x)$ is uniformly bounded jointly in $N$, $x\in (\lam_j-\delta,\lam_j+\delta)$ and any chosen compact subset of $\{\lam_i\}_{i=1}^{m}\in (-1,1)^m$. Observing that $|\det(T_{N,k,\lam_j}(x))|=1$, we see additionally that the same is true for $T_{N,k,\lam_j}^{-1}(x)$. Lastly, combining these observations with Lemma \ref{sk expansion lem}, we see that for any $k\in \Z$, there is $R_N(x)$ such that
		\[T_{N,k,\lam_j}(x)=(I+R_N(x))T_{N,\eta_k^{-1}\lam_j}(\eta_k^{-1}x;\cal{W}_{N,k},\{\eta_k^{-1}\lam_i\}_i,\{\alpha_i\}_i),\]
		where for any $l\ge 0$, $R_N^{(l)}(x)=O(N^{-1})$ uniformly both $x\in (\lam_j-\delta,\lam_j+\delta)$ and compact subsets of $\{\lam_i\}_{i=1}^{m}\in (-1,1)^m$.
	\end{remark}
	Lastly, we define the following vector of special functions
	\[\cal{J}_{\alpha}(x)=\begin{bmatrix}\sign(x)\sq{\pi |x|}J_{\alpha+1/2}(|x|)\\ \sq{\pi |x|}J_{\alpha-1/2}(|x|) \end{bmatrix},\label{calJ-def}\]
	where here $J_{\nu}$ denotes the Bessel function of parameter $\nu$.
	
	\begin{prop}
		\label{Asym p_N Bessel}
		For $\delta>0$ small enough, and any choice of $k\in \Z$, we may find $R_N(x)$ such that for any choice of $1\le i\le m$ and $x\in (\lam_i-\delta,\lam_i+\delta)$ we have that
		\[\begin{bmatrix}\pi^{1/2}p_{N-k}(x)\\
			-i\pi^{1/2}p_{N-k-1}(x)\end{bmatrix}w_N(x)^{1/2}=(I+R_N(x))T_{N,k,\lam_i}(x)\cal{J}_{\alpha}((N-k)f_{N,k,\lam_i}(x))\label{Bessel Asymptotic}.\]
		Additionally, for any $l\ge 0$, and any choice of compact subset of $\{\lam_i\}_{i=1}^{m}\in \{(\lam_1,\cdots \lam_m)\in (-1,1)^m:\lam_1>\cdots >\lam_m\}$, we may choose $\delta_0>0$ such that for $0<\delta<\delta_0$ we have that $R^{(l)}_N(x)=O(N^{-1})$ uniformly in both $x\in \bigcup_{k=1}^{m}(\lam_k-\delta,\lam_k+\delta)$ and $\{\lam_i\}_{i=1}^{m}$ in our chosen compact set.
	\end{prop}
	
	As before, this result will follow from analyzing the parametrix $P_{\lam_j}$ defined in Definition 4.21 of \cite{GUEGMC}. We observe that the function defined in (4.27) of \cite{GUEGMC} satisfies \[(W_j)_+(x)=e^{\cal{W}(x)}\omega(x)^{1/2}e^{-i\pi \alpha_j\sign(x-\lam_j)}.\]
	Employing this we see that for $x\in (\lam_j-\delta,\lam_j+\delta)$
	\[(P_{\lam_j})_+(x)=(E_j)_+(x)(\Psi_{\alpha_j})_+(Nf_{\lam_j}(x))e^{-\cal{W}(x)\sigma}\omega(x)^{-\sigma/2}e^{i\pi \alpha_j\sign(x-\lam_j)\sigma}e^{-Nis(x)\sigma},\label{Para-def-stop}\]
	\[(E_j)_+(x)=(P_\infty)_+(x)e^{\cal{W}(x)\sigma}\omega(x)^{\sigma/2}e^{-i\pi \alpha_j\sign(x-\lam_j)
		\sigma/2}e^{iNs(\lam_j)\sigma}e^{-i\frac{\pi}{4}\sigma}\frac{1}{\sq{2}}\begin{bmatrix}
		1&i\\
		i&1
	\end{bmatrix},\]
	and where $\Psi_{\alpha}$ is a fixed matrix-valued function (defined in (4.26-4.33) of \cite{strong-RH}) and satisfying
	\[(\Psi_{\alpha})_+(\zeta)=\frac{\sq{\pi}}{2}\sq{\zeta}\begin{bmatrix}H_{\alpha+1/2}^{(2)}(\zeta)&-iH_{\alpha+1/2}^{(1)}(\zeta)\\
		H_{\alpha-1/2}^{(2)}(\zeta)&-iH_{\alpha-1/2}^{(1)}(\zeta)\end{bmatrix}e^{-(\alpha+1/4)\pi i \sigma};\;\;\zeta>0,\]
	\[(\Psi_{\alpha})_+(\zeta)=\frac{\sq{\pi}}{2}\sq{-\zeta}\begin{bmatrix}iH_{\alpha+1/2}^{(1)}(-\zeta)&-H_{\alpha+1/2}^{(2)}(-\zeta)\\
		-iH_{\alpha-1/2}^{(1)}(-\zeta)&H_{\alpha-1/2}^{(2)}(-\zeta)\end{bmatrix}e^{(\alpha+1/4)\pi i \sigma};\;\;\zeta<0,\]
	where here $H^{(1)}_{\beta}$ and $H^{(2)}_{\beta}$ denote the Hankel functions of the first and second kind at parameter $\beta$, respectively. Employing (\ref{Paramatrix-(-1,1)}) we see that we may write
	\[(E_j)_+(x)=D_{\infty}^{\sigma}e^{i\frac{\pi}{4}}T_{N,\lam_j}(x). \label{E_j-form}\]
	
	\begin{proof}[Proof of Proposition \ref{Asym p_N Bessel}]
		As we have that $H_{\beta}^{(1)}(x)+H_{\beta}^{(2)}(x)=2J_{\beta}(x)$ (see Section 9.1 of \cite{AS}), we see that for $\zeta\in \R$
		\[(\Psi_{\alpha})_+(\zeta)e^{i\pi\alpha\sign(\zeta)\sigma}\begin{bmatrix}1\\ 1 \end{bmatrix}=e^{-i\pi/4}\cal{J}_{\alpha}(\zeta).\]
		Combining this with (\ref{Para-def-stop}), (\ref{E_j-form}), and observing that $\sign(x-\lam_j)=\sign(Nf_{\lam_j}(x))$, we see that
		\[(P_{\lam_j})_+(x)e^{Nis(x)\sigma}e^{\cal{W}(x)\sigma}\omega(x)^{\sigma/2}\begin{bmatrix}1\\ 1\end{bmatrix}=D_{\infty}^{\sigma}e^{i\frac{\pi}{4}}T_{N,\lam_j}(x)e^{-i\frac{\pi}{4}}\cal{J}_{\alpha_j}(Nf_{\lam_j}(x))=D_{\infty}^{\sigma}T_{N,\lam_j}(x)\cal{J}_{\alpha_j}(Nf_{\lam_j}(x)).\]
		Now employing Proposition \ref{Simple-Remainder Proposition}, (\ref{first-col-bulk}), and Remark \ref{Remark-coefficents-new} we obtain the $k=0$ case of (\ref{Bessel Asymptotic}). The proof for arbitrary $k$ follows from Remark \ref{T_lam-size-remark}.
	\end{proof}
	
	Together these results will allow us to uniformly describe the behavior of $p_{N-k}$ on $\R$. What remains is to understand the behavior of $\ell_i$ and $q_i$ introduced in (\ref{l-def}) and (\ref{q-def}) for $1\le i\le m$. To do so, we introduce the following matrix
	\[V_{N,\lam_i}=e^{N\lam_i^2\sigma}N^{\alpha_i\sigma}\begin{bmatrix}-\kappa_{N-1}H(p_{N-1})(\lam_i)&-\kappa_N^{-1}(2\pi i)^{-1}H(p_N)(\lam_i)\\
		2\pi i\kappa_{N-1}p_{N-1}(\lam_i)&\kappa^{-1}_Np_{N}(\lam_i)\end{bmatrix}.\]
	The utility of this matrix comes from the following relation
	\[(x-\lam_i)\begin{bmatrix}-\ell_i(x)\\
		2\pi i q_i(x)\end{bmatrix}=V_{N,\lam_i}\begin{bmatrix}\kappa_{N}^{-1}p_{N}(x)\\
		-2\pi i\kappa_{N-1}p_{N-1}(x)\end{bmatrix}.\label{l and q relation}\]
	In particular, in view of the above understanding of $p_{N}$ and $p_{N-1}$, to understand $\ell_i$ and $q_i$, we only need to provide asymptotics for $V_{N,\lam_i}$.
	\begin{prop}
		\label{l and q asym}
		For $1\le j\le m$, we have that
		\[V_{N,\lam_j}=c_{j}^{-\sigma}e^{i\frac{\pi}{4}}e^{-\frac{\pi i}{4}\sigma}\sigma_1T_{N,\lam_j}(\lam_j)^{-1}D_{\infty}^{-\sigma}(I+O(N^{-1}))e^{-N\frac{\ell}{2}\sigma},\]
		where here we denote
		\[c_{j}=\frac{\sq{2\pi}(1-\lam_j^2)^{\alpha_j/2}}{\Gamma(\alpha_j+1/2)}e^{-\cal{W}(\lam_j)}\prod_{k\neq
			j}|\lam_k-\lam_j|^{-\alpha_k}.\]
		Moreover the error term is uniform over compact subsets of $\{\lam_i\}_{i=1}^{m}\in \{(\lam_1,\cdots \lam_m)\in (-1,1)^m:\lam_1>\cdots >\lam_m\}$.
	\end{prop}
	
	\begin{remark}
		\label{l and q remark}
		We observe that by combining Remark \ref{Remark-coefficents-new}, Proposition \ref{l
			and q asym}, and (\ref{l and q relation}) we see that for $1\le i\le m$ there are constants $C_{k,N}$ for $1\le k\le 4$, such that
		\[\ell_{i}(x)=C_{1,N}\frac{p_N(x)}{x-\lam_i}+C_{2,N}\frac{p_{N-1}(x)}{x-\lam_i},\]
		\[q_{i}(x)=C_{3,N}\frac{p_N(x)}{x-\lam_i}+C_{4,N}\frac{p_{N-1}(x)}{x-\lam_i},\]
		where for all $i$, we have that $C_{i,N}=O(1)$ uniformly in compact subsets of $\{\lam_i\}_{i=1}^{m}\in \{(\lam_1,\cdots \lam_m)\in (-1,1)^m:\lam_1>\cdots >\lam_m\}$.
	\end{remark}
	
	\begin{proof}
		By the Christoffel-Darboux formula (\ref{chris-1}), we see that $\det(Y_N(z))=1$. From this, we observe that for $1\le j\le m$
		\[V_{N,\lam_j}=e^{N\lam_j^2\sigma}N^{\alpha_j\sigma}(Y_N)_+(\lam_j)^{-1}.\label{fur-rel}\]
		Additionally, we observe the matrix relation
		\[e^{-\cal{W}(x)\sigma}\omega(x)^{-\sigma/2}e^{-Nis(x)\sigma}\begin{bmatrix}1 &0\\
			e^{-2Nis(x)-2\cal{W}(x)}\omega(x)^{-1}&1\end{bmatrix}e^{Nis(x)\sigma}e^{Nx^2\sigma}=\begin{bmatrix}1 &0\\
			1&1\end{bmatrix}w_N(x)^{-\sigma/2}.\]
		Using this relation and Proposition \ref{Simple-Remainder Proposition}, (\ref{paramatrix-Y-S-(-1,1)}), (\ref{Para-def-stop}) and  (\ref{E_j-form}), we see that for $x\in (\lam_j,\lam_j+\delta)$
		\[(Y_N)_+(x)=e^{N\frac{\ell}{2}\sigma}(I+R_N(x))D_{\infty}^{\sigma}e^{\frac{i\pi}{4}}T_{N,\lam_j}(x)(\Psi_{\alpha_j})_+(Nf_{\lam_j}(x))e^{i\pi \alpha_j\sigma}\begin{bmatrix}1 &0\\
			1&1\end{bmatrix}w_N(x)^{-\sigma/2},\label{tl-qf}\]
		where $R_N$ is the remainder term from Proposition \ref{Simple-Remainder Proposition}. We recall from Section 4.3 of \cite{strong-RH} (more precisely, the analytic continuation of their (4.21)) that for $\zeta>0$:
		\[(\Psi_{\alpha})_+(\zeta)\begin{bmatrix}1&0\\ e^{-2\pi i\alpha} &1\end{bmatrix}=\sq{\zeta}\begin{bmatrix}\sq{\pi}I_{\alpha+1/2}(-i\zeta)&-\frac{1}{\sq{\pi}}K_{\alpha+1/2}(-i\zeta)\\ -i\sq{\pi}I_{\alpha-1/2}(-i\zeta)& -\frac{i}{\sq{\pi}}K_{\alpha-1/2}(-i\zeta) \end{bmatrix}e^{-i\frac{\alpha}{2}\pi \sigma}, \label{new-temp-112}\]
		where here $I_{\alpha}$ and $K_{\alpha}$ are modified Bessel functions of the first and second kind, respectively, where both are taken with respect to the principal branch given by $\arg(z)\in (-\pi,\pi)$. The asymptotics of the right-hand side of (\ref{new-temp-112}) as $\zeta\to 0$ are computed in (112) of \cite{Krav} as
		\[e^{i\frac{\pi}{4}}\bigg(\begin{bmatrix}0&-1\\
			-i&0\end{bmatrix}+C_\alpha(\zeta)\bigg)\bigg(\frac{\sq{2\pi}\zeta^{\alpha}}{\Gamma(\alpha+1/2)2^{\alpha}}\bigg)^{\sigma}e^{-i\pi\alpha\sigma},\]
		where $C_{\alpha}(\zeta)=O(\zeta\log(|\zeta|))$ entrywise as $\zeta\to 0$. We note as well that
		\[e^{-i\pi \alpha\sigma}\begin{bmatrix}1&0\\ e^{-2\pi i\alpha} &1\end{bmatrix}^{-1}e^{i\pi \alpha\sigma}\begin{bmatrix}1 &0\\
			1&1\end{bmatrix}=I.\]
		Recalling that both $R_N(x)$ and $T_{N,\lam_j}(x)$ are continuous at $\lam_j$, we see by (\ref{tl-qf}) that
		\[(Y_N)_+(\lam_j)=e^{N\frac{\ell}{2}\sigma}(I+R_N(\lam_j))D_{\infty}^{\sigma}e^{i\frac{\pi}{4}}T_{N,\lam_j}(\lam_j)e^{i\frac{\pi}{4}}\begin{bmatrix}0&-1\\
			-i&0\end{bmatrix}\bigg(\frac{\sq{2\pi}(Nf'_{\lam_j}(\lam_j))^{\alpha_{j}}}{2^{\alpha_j}\Gamma(\alpha_j+1/2)}\bigg)^{\sigma}\times \]\[(\prod_{k\neq j}|\lam_j-\lam_k|^{-\alpha_k\sigma})e^{-\cal{W}(\lam_j)\sigma}e^{N\lam_j^2\sigma}=e^{N\frac{\ell}{2}\sigma}(I+R_N(\lam_j))D_{\infty}^{\sigma}T_{N,\lam_j}(\lam_j)\begin{bmatrix}0&-i\\1&0\end{bmatrix}c_{j}^{\sigma}N^{\alpha_j\sigma}e^{N\lam_j^2\sigma}.\]
		where in the last step we have used that $f_{x}'(x)=2\sq{1-x^2}$. Combining this with (\ref{fur-rel}) and observing that \[\begin{bmatrix}0&-i\\1&0\end{bmatrix}^{-1}=e^{i\frac{\pi}{4}}e^{-\frac{\pi i}{4}\sigma}\sigma_1\] completes the proof.
	\end{proof}
	
	\subsection{Modifications for Merging Singularities \label{subsection-merging}}
	
	We will conclude this section by considering the modifications that need to be made when we are in the case of merging singularities required in Proposition \ref{Main-prop-Merging-2}. In particular, for the rest of this section assume that we are in the case of $m=2$, $\cal{W}=0$, $\alpha_1=\alpha_2=\alpha>0$, and are allowing $\lam_1,\lam_2$ to depend on $N$. Analysis of the Riemann-Hilbert problem with such merging singularities is undertaken in \cite{Merging}, where they modify the above parametrices to obtain asymptotics in this case. We will primarily be reliant on the results on merging singularities in \cite{Merging,RHmerging}, as well as the discussion in \cite{GUEGMC}. As we only require the supercritical case for merging singularities (i.e., we consider $|\lam_1-\lam_2|^{-1}=O(N^{\gamma})$ where $\gamma<1$), all the above results will still hold only with worse bounds on the error terms and different domains.
	
	Away from the points $\{\lam_1,\lam_2\}$ though, the above asymptotics hold essentially as stated.
	\begin{prop}
		\label{Merging Error Bulk}
		Let $\epsilon>0$. Then there is $\delta_0>0$, such that for $0<\delta<\delta_0$, and any choice of $\lam_1,\lam_2\in (-1+\epsilon,1-\epsilon)$ (possibly $N$-dependant) with $\delta>\lam_1-\lam_2>0$, there is an error term $R_N$, such that for $l\ge 0$, we have that $R^{(l)}_N(x)=O(N^{-1})$ uniformly in both $x\in \R\setminus\{\pm 1-\delta,\pm 1+\delta\}\cup [\lam_2-\delta,\lam_1+\delta]$ and the choice $(\lam_1,\lam_2)$, and such that the following asymptotics hold: For $x\in (-1+\delta,1-\delta)\setminus (\lam_2-\delta,\lam_1+\delta)$, we have (\ref{Bulk Asymptotic}), for $|x|\ge 1+\delta$, we have (\ref{Outside Asymptotic}), and for $x\in (\pm 1-\delta,\pm 1+\delta)$ we have (\ref{pN asym on 1}) or (\ref{pN asym on -1}), for $\pm 1=1$ and $\pm 1=-1$, respectively. Lastly, (\ref{constant-asymptotics-new}) and thus (\ref{coefficent-new-eqn}) hold as well, with error term uniform in the choice of $(\lam_1,\lam_2)$ as well.
	\end{prop}

	For the remaining regions around $\lam_1$ and $\lam_2$, the asymptotics of Proposition \ref{Asym p_N Bessel} and Proposition \ref{Bulk Asymptotic} occur in a smaller region and have a worse error bound.
	
	\begin{prop}
		\label{Merging Error Bessel}
		Let $\epsilon>0$ and $0<\gamma<1$. Then there is $\delta_0>0$, such that for $0<\delta<\delta_0$, and any choice of $\lam_1,\lam_2\in (-1+\epsilon,1-\epsilon)$ (possibly $N$-dependant) with $\delta>\lam_1-\lam_2>N^{-\gamma}$, the following holds. Denoting $r_N=2(\lam_1-\lam_2)$, there is an error term $R_N$, such that for $l\ge 0$, we have that $R^{(l)}_N(x)=O(r_N^{-1})$ uniformly in $x\in(\lam_2-\delta,\lam_1+\delta)$ and the choice of $(\lam_1,\lam_2)$, and such that the following asymptotics hold: For $x\in (\lam_2-\delta,\lam_2-\delta r_N^{-1})\cup (\lam_2+\delta r_N^{-1},\lam_1-\delta r_N^{-1})\cup (\lam_1+\delta r_N^{-1},\lam_1+\delta)$, we have (\ref{Bulk Asymptotic}), and for $1\le i\le 2$, and $x\in (\lam_i-\delta r_N^{-1},\lam_i+\delta r_N^{-1})$, we have that (\ref{Bessel Asymptotic}) holds. Additionally, the asymptotics of Proposition \ref{l and q asym} holds with error term now of order $O(N^{\gamma-1})$, but now uniform in the choice of $(\lam_1,\lam_2)$.
	\end{prop}
	
	We will now begin to establish the prerequisites for the proofs for these statements. For the remainder of the section we will denote $u=(\lam_1-\lam_2)^2/4$ and $v=(\lam_1+\lam_2)/2$. As before, this will essentially follow from recalling and simplifying various parametrices. More specifically, in \cite{Merging} they give asymptotics of $Y_N$ in the merging case in terms of a related matrix-valued function $''S''$, which we denote by $S^M_N$ to avoid confusion with the above $S_N$. To aid the reader in the translation of their notation, their $(n,t,V,a,b)$ coincides with our $(N,u,-2(x-v)^2,1-v,-1-v)$. The definition of $S^M_N$ in \cite{Merging} is given in (4.10) and (5.1), only employing Fig. 8 rather than Fig. 7, as explained in their Section 6.1. The construction is almost identical to the one in \cite{GUEGMC} (which we used above) with the only non-notational change being that for $x\in (\lam_1,\lam_2)$ the function $S_N^M$ is defined by (\ref{paramatrix-Y-S-outside}) rather than (\ref{paramatrix-Y-S-(-1,1)}) (see the difference between Figure 1 of \cite{GUEGMC} and Figure 8 of \cite{Merging}). In particular, the function $(S^M_N)_+$ may be given in terms of the above $(S_N)_+$ as follows: For $x\notin [\lam_1,\lam_2]$, $(S^M_N)_+(x)=(S_N)_+(x)$, and for $x\in (\lam_1,\lam_2)$, we have that \[(S_N^M)_+(x)=(S_N)_+(x)\begin{bmatrix} 1 & 0 \\ 1 & 1 \end{bmatrix}.\label{S-S^M}\] 
	They then, as above, solve this by employing a sequence of paramatrices. We are only concerned with the case that $t>0$, which is dealt with in Section 6 of \cite{Merging}. They construct paramatrices $N$, $P^{(a)}$, $P^{(b)}$, and $P$, where $P^{(a)}$ is defined in a neighborhood of $1$, $P^{(b)}$ is defined in a neighborhood of $-1$, $P$ is defined in a neighborhood of $[\lam_2,\lam_1]$, and $N$ is defined in the remainder. These first three though are defined as $P^{(\infty)}$, $P^{(1)}$, and $P^{(-1)}$ in \cite{GUEGMC}, which coincide with our $P_{\infty}$, $P_1$, and $P_{-1}$. In particular, they prove the following result.
	\begin{prop}
		\label{prop merging}
		Let us fix $\epsilon>0$. Then there is $\delta_0>0$ such that for any $0<\delta<\delta_0$, and any choice of $\lam_1,\lam_2\in (-1+\epsilon,1-\epsilon)$ (possibly $N$-dependant) with $\delta>(\lam_1-\lam_2)>0$. Then there is $R_N$ such that
		\[(S^M_N)_+(x)=(I+R_N(x))P_{\delta}(x),\]
		where here \[(P_{\delta})_+(x)=\begin{cases}(P_{u,v})_+(x);\;x\in (\lam_2-\delta,\lam_1+\delta)\\
			(P_{\pm 1})_+(x);\;x\in (\pm 1-\delta,\pm 1+\delta)\\
			(P_{\infty})_+(x);\;\text{otherwise}\end{cases},\]
		and where for any $l\ge 0$, we have that $R_{N}^{(l)}(x)=O(N^{-1})$ uniformly in $x\in \R\setminus\{\pm 1-\delta,\pm 1+\delta,\lam_2-\delta,\lam_1+\delta\}$ and the choice of $(\lam_1,\lam_2)$.
	\end{prop}
	This result for $R_N$ essentially follows from Section 6.5 of \cite{Merging}, though they only state the error bounds for $R_N$. On the other hand, they actually establish that $R_N$ satisfies a small norm Riemann-Hilbert problem, with $N$-independent contours, whose jump matrices are $O(N^{-1})$ uniformly in a $N$-independent neighborhood of the contours. In particular, applying a contour deformation argument and the Cauchy integral formula as in the proof of Theorem 4.37 in \cite{GUEGMC} (see also the discussion in Section 4 of \cite{Krav}), we obtain the desired error bounds on $R_N^{(l)}$ for all $l\ge 0$. In addition, while the results of \cite{Merging} establish uniformly only in $u$, the extension to uniformity in  $v$ is shown in Appendix G of \cite{GUEGMC}. 
	
	As the paramatrices $P_{\infty}$ and $P_{\pm 1}$ here coincide with those considered in the previous subsection exactly, replacing Proposition \ref{Simple-Remainder Proposition} with Proposition \ref{prop merging} as appropriate in the proofs in the previous subsection gives a proof of Proposition \ref{Merging Error Bulk} (noting that $(S_N)_+=(S_N^M)_+$ in all regions considered in this proposition).
	
	On the other hand, the parametrix $P_{u,v}$ does not directly coincide with any of the parametrices above, and indeed when one lets $(\lam_1-\lam_2)=O(N^{-1})$, it appears to be quite different from a simple combination of the ones considered above. Instead, we must use the asymptotics for $P_{u,v}$ in powers of $O(N^{-1}u^{-1/2})=O(N^{-1}r_N)$ which are provided in \cite{RHmerging}, which we will show yields the following result.
	
	\begin{lem}
		\label{merging-paramatrix-lemma}
		With the set-up of Proposition \ref{prop merging}, there is $\delta_1>0$, such that if $(\lam_1-\lam_2)>\delta_1N^{-1}$, then there is $R_N(x)$ such that
		\[(P_{u,v})_+(x)=(I+R_N(x))(P_{\infty })_+(x);\;\; x\notin (\lam_2-\sq{u} \delta,\lam_1+\sq{u} \delta),\]
		\[(P_{u,v})_+(x)=(I+R_N(x))(P_{\infty })_+(x)\begin{bmatrix}1&0\\
			1&1\end{bmatrix};\;\; x\in (\lam_2+\sq{u} \delta,\lam_1-\sq{u} \delta),\]
		\[(P_{u,v})_+(x)=(I+R_N(x))(P_{\lam_1})_+(x);\;\;\; x\in (\lam_1,\lam_1+\sq{u} \delta),\]
		\[(P_{u,v})_+(x)=(I+R_N(x))(P_{\lam_1})_+(x)\begin{bmatrix}1&0\\
			1&1\end{bmatrix};\;\;x\in (\lam_1-\sq{u} \delta,\lam_1),\]
		\[(P_{u,v})_+(x)=(I+R_N(x))(P_{\lam_{2}})_+(x);\;\;\; x\in (\lam_2-\sq{u}\delta,\lam_2),\]
		\[(P_{u,v})_+(x)=(I+R_N(x))(P_{\lam_2})_+(x)\begin{bmatrix}1&0\\
			1&1\end{bmatrix};\;\;x\in (\lam_2,\lam_2+\sq{u} \delta),\]
		where for any $l\ge 0$, we have that $R_{N}^{(l)}(x)=O(u^{-1/2}N^{-1})$ uniformly in both $x$ in these regions, and in the choice of $(\lam_1,\lam_2)$.
	\end{lem}
	
	Recalling (\ref{S-S^M}) for $x\in [\lam_1,\lam_2]$, we see that the combined asymptotics of this lemma and Proposition \ref{merging-paramatrix-lemma} essentially coincide with those in Proposition \ref{Simple-Remainder Proposition} around $\{\lam_1,\lam_2\}$, except that now that choice of $\delta$ is now $N$-dependant and the error terms are now of order $O(N^{-1}u^{-1/2})$ instead of $O(N^{-1})$. Thus repeating the proofs given above, replacing $O(N^{-1})$ by $O(N^{-1}u^{-1/2})=O(N^{-1}r_N)$ when necessary is sufficient to prove Proposition \ref{Merging Error Bessel}.
	
	The remainder of this subsection will consist of the proof of Lemma \ref{merging-paramatrix-lemma}. This essentially follows from Section 5 of \cite{RHmerging}, though we will have to trace a variety of notations from \cite{Merging,RHmerging}. A similar translation is given in Appendix G of \cite{GUEGMC}.
	
	In preparation, let us define for $x\in \R$
	\[\lam(x)=N(\frac{s(\lam_1)+s(\lam_2)}{2}-s(x)),\quad s_{N}=2Ni(s(\lam_2)-s(\lam_1)).\]
	We recall a form for the parametrix $P_{u,v}$, which is defined in Section 6.4 of \cite{Merging}. For $x\in(v-\delta_0,v+\delta_0)$
	\[(P_{u,v})_+(x)=E_+(x)\Psi^{(2)}_+(\lam(x);s_{N})\omega(x)^{\sigma/2}e^{Nis(x)\sigma}(\sigma\sigma_1),\]
	\[E_+(x)=D_{\infty}^{\sigma}T_{\infty}(x)(\sigma\sigma_1)^{-1}e^{i\pi \alpha\sigma}e^{-iN\frac{s(\lam_1)+s(\lam_2)}{2}\sigma},\label{psi-merge}\]
	where $\Psi^{(2)}(\lam,s)$ is a fixed matrix-valued function defined in Section 3 of \cite{Merging}, which depends only on $\alpha$ and $s$. We will need the asymptotics of $\Psi^{(2)}_+$ as $s\to \infty$. To get these, we note that in view of (3.11) and (3.5) of \cite{Merging}, we may write
	\[\Psi^{(2)}(x,s)=\Psi_{CK}(\frac{-4x}{|s|}i;s)\begin{cases} e^{-\sign(x)\pi i\alpha\sigma};\;\;|x|>|s|/4\\
		I;\;\; |x|<|s|/4
	\end{cases},\label{recant}\]
	where $\Psi_{CK}$ is the a solution to the model Riemann-Hilbert problem introduced in Section 3 of \cite{RHmerging} with parameters $\alpha_1=\alpha_2=\alpha$ and $\beta_1=\beta_2=0$. In Section 5 of \cite{RHmerging} uniform asymptotics for $\Psi_{CK}$ in the case of $|s|\to \infty$ are derived. To demonstrate these, the Riemann-Hilbert problem for $\Psi_{CK}$ is solved using elementary transformations and yet another auxiliary Riemann-Hilbert problem $M$. We note for the convenience of the reader that in the case relevant to us (i.e., $\beta=0$), $M$ is expressed explicitly in terms of Hankel and modified Bessel functions in (2.47) of \cite{Merging}. In particular, we see that
	\[M_+(x)=(\sigma\sigma_1)e^{-i\frac{\pi}{4}\sigma}\frac{1}{\sq{2}}\begin{bmatrix}1&i\\
		i&1\end{bmatrix}(\Psi_{\alpha})_+(x)(\sigma\sigma_1)^{-1}e^{-i\frac{\pi}{2}\alpha i\sigma}.\label{final-merging}\]
	The following asymptotics of $\Psi^{(2)}(x;s)$ follow from those in Section 5 of \cite{RHmerging} and (\ref{recant}).
	\begin{lem}
		\label{s-asymptotics-lemma}
		There exists $s_0>0$ and $\delta_0>0$ such that for any $s\ge s_0$ and $\delta_0>\delta>0$, there is $\bar{R}_s$, such that
		\[\Psi^{(2)}_+(x;s)=(I+\bar{R}_s(x))e^{ix\sigma}e^{-i\alpha\pi\sigma\sign(x)};\;\; |x|>\frac{|s|}{4}+|s|\delta\]
		\[\Psi^{(2)}(x;s)=(I+\bar{R}_s(x))e^{ix\sigma}\begin{bmatrix}1&-1\\
			0&1\end{bmatrix};\;\; |x|<\frac{|s|}{4}-|s|\delta\]
		\[\Psi^{(2)}(x;s)=(I+\bar{R}_s(x))e^{-i\pi\alpha/2\sigma}e^{-i|s|/4\sigma}M(x
		)e^{\frac{3}{2}\alpha i\pi \sigma};\;\; \frac{|s|}{4}\le x\le \frac{|s|}{4}+|s|\delta\]
		\[\Psi^{(2)}(x;s)=(I+\bar{R}_s(x))e^{-i\pi \alpha/2\sigma}e^{-i|s|/4\sigma}M(x)e^{-\frac{1}{2}\alpha i\pi \sigma}\begin{bmatrix}1&-1\\
			0&1\end{bmatrix};\;\;\frac{|s|}{4}-|s|\delta\le x\le \frac{|s|}{4}\]
		\[\Psi^{(2)}(x;s)=(I+\bar{R}_s(x))e^{-\frac{3i\pi}{2} \alpha\sigma}e^{i|s|/4\sigma}M(x)e^{-\frac{1}{2}\alpha\pi \sigma};\;\; \frac{|s|}{4}\le -x\le \frac{|s|}{4}+|s|\delta\]
		\[\Psi^{(2)}(x;s)=(I+\bar{R}_s(x))e^{-\frac{3i\pi}{2} \alpha\sigma}e^{i|s|/4\sigma}M(x)e^{\frac{3}{2}\alpha i\pi \sigma}\begin{bmatrix}1&-1\\
			0&1\end{bmatrix};\;\;\frac{|s|}{4}-|s|\delta\le -x\le \frac{|s|}{4},\]
		and such that for $l\ge 0$, we have that $\bar{R}_s^{(l)}(x)=O(s^{-k}(s+|x|)^{-1})$ uniformly in $s\in (s_0,\infty)$ and $x$ in the regions above.
	\end{lem}
	\begin{remark}
		The factor of $s^{-k}$ occuring in the error term in Lemma \ref{s-asymptotics-lemma} is due to the $s^{-1}$-scaling in (\ref{recant}). Indeed if one denotes by $\hat{R}_s$ the error term for the corresponding error term in the expansion of $\Psi_{CK}(x;s)$, one has that for any $l\ge 0$, $\hat{R}_s^{(l)}(x)=O(s^{-1}(1+|x|)^{-1})$ (see (5.25) of \cite{RHmerging}). 
	\end{remark}
	\begin{proof}[Proof of Lemma \ref{merging-paramatrix-lemma}]
		We recall that $\sigma_1\sigma=-\sigma\sigma_1$, so that $a^{\sigma}\sigma_1=\sigma a^{-\sigma}$. We also note that
		\[(\sigma\sigma_1)^{-1}\begin{bmatrix}
			1&-1\\ 0 & 1
		\end{bmatrix}(\sigma\sigma_1)=\begin{bmatrix}
			1&0\\ 1 & 1
		\end{bmatrix}.\]
		With these results and (\ref{final-merging}), all of these expressions follow by routine computation. For example, we observe for $x\in (\lam_1,\lam_1+\sq{u}\delta)$ that (potentially changing $\delta$) $\lam(x)\in (|s|/4,|s|/4+\delta |s|)$ so that
		\[(P_{u,v})_+(x)=D_{\infty}^{\sigma}T_{\infty}(x)(\sigma\sigma_1)^{-1}e^{i\pi \alpha\sigma}e^{-iN\frac{s(\lam_1)+s(\lam_2)}{2}\sigma}(I+\bar{R}_s(\lam(x)))e^{iN\frac{s(\lam_1)+s(\lam_2)}{2}\sigma}e^{-i\alpha\pi\sigma}\omega(x)^{\sigma/2}(\sigma \sigma_1).\]
		Noting that $\bar{R}_{s_N}(\lam(x))=O(u^{-1/2}N^{-1})$ we see that conjugating $\bar{R}_{s_N}$ as above
		\[(P_{u,v})_+(x)=(I+O(N^{-1}u^{-1/2}))D_{\infty}^{\sigma}T_\infty(x).\]
		All claims follow from similar computations.
	\end{proof}
	
	\section{Proof of Proposition \ref{Exponential-Error-Prop} \label{Exponential Error Section}}
	
	In this section, we will prove Proposition \ref{Exponential-Error-Prop}. As $\cal{E}$ vanishes on a neighborhood of $[-1,1]$, the integrals which compose the entries of $\Delta^1_N$ are supported on regions where the integrand is exponentially small (see Remark \ref{exp-decay-s}). Combining this with some rough estimates for the elements of $\Delta^0_N$ (see Proposition \ref{Norm-Error-Proposition}), we will be able to obtain Proposition \ref{Exponential-Error-Prop} by employing bounds for the determinant under perturbation.
	
	Now, as before we fix a choice of $(\cal{W},\{\lam_i\}_i,\{\alpha_i\}_i)$ satisfying the hypotheses of Theorem \ref{main-theorem}. For a linear operator $A$ on an inner-product space, let $\|A\|_F=\sq{\Tr(A^*A)}$ denote the Frobenius norm. We recall the operators $\Delta_N^1$ and $\Delta_N^0$ (defined in (\ref{eqn:ignore-144}) and (\ref{eqn:ignore-145})) of Section \ref{preliminary-section}, which act on $\cal{P}_N$ with the inner-product $(f,g)_{w,N}=\int f(x)g(x)w_N(x)dx$. The main technical result of this section is the following proposition.
	\begin{prop}
		\label{Norm-Error-Proposition}
		There are $C,c>0$, such that
		\[\|\Delta_N^1\|_F=O(e^{-Nc}),\;\;\;\|\Delta_N^0\|_F=O(N^C).\]
		Moreover for any choice of compact subset of $\{\lam_i\}_{i=1}^{m}\in \{(\lam_1,\cdots \lam_m)\in (-1,1)^m:\lam_1>\cdots >\lam_m\}$, we may choose $C,c$ such that these estimates are uniform on the chosen compact set.
	\end{prop}
	We will now give the proof of Proposition \ref{Exponential-Error-Prop} assuming Proposition \ref{Norm-Error-Proposition}.
	\begin{proof}[Proof of Proposition \ref{Exponential-Error-Prop}]
		We recall from Lemma \ref{Added-lem} that $[\Delta_N^0]_{22}=I$, so that $[\Delta_N]_{22}=I+[\Delta^1_N]_{22}$. We observe that by Proposition \ref{Norm-Error-Proposition}, we have that $\|[\Delta_N^1]_{22}\|_F=O(e^{-Nc})$. Thus by the Neumann series for the inverse, we see that $[\Delta_N]_{22}$ is invertible and that 
		$\|[\Delta_N]_{22}^{-1}-I\|_F=O(e^{-Nc})$. By the Schur complement formula, we have that
		\[\det(\Delta_N)=\det([\Delta_N]_{22})\det([\Delta_N]_{11}-[\Delta_N]_{12}[\Delta_N]_{22}^{-1}[\Delta_N]_{21}).\label{shur-4}\]
		Now recall that $[\Delta_N^0]_{12}=0$ so that $[\Delta_N]_{12}=[\Delta_N^1]_{12}$. Thus we have that
		\[\|[\Delta^1_N]_{11}-[\Delta_N]_{12}[\Delta_N]_{22}^{-1}[\Delta_N]_{21}\|_F=\]\[\|[\Delta^1_N]_{11}-[\Delta_N^1]_{12}[\Delta_N]_{22}^{-1}[\Delta_N]_{21}\|_F\le\|[\Delta^1_N]_{11}\|_F+\|[\Delta^1_N]_{12}\|_F\|[\Delta_N]_{22}^{-1}\|_F\|[\Delta_N]_{21}\|_F.\]
		The work above shows that $\|[\Delta_N]_{22}^{-1}\|_F=\|I\|_F+O(e^{-Nc})=O(N)$, and by Proposition \ref{Norm-Error-Proposition} we have that $\|[\Delta^1_N]_{11}\|_F,\|[\Delta^1_N]_{12}\|_F=O(e^{-Nc})$. Combining this with the above inequality, we see that
		\[\|[\Delta^1_N]_{11}-[\Delta_N]_{12}[\Delta_N]_{22}^{-1}[\Delta_N]_{21}\|_F=O(e^{-Nc/2}).\label{nu-bound-sec-4}\]
		We recall the bound on $\ell$-by-$\ell$ matrices $A$ and $B$ (see Theorem 2.12 of \cite{Det-Bounds})
		\[|\det(A)-\det(B)|\le \ell \|A-B\|_F(\max(\|A\|_F,\|B\|_F))^{\ell-1}.\]
		Recalling that $\dim(\cal{P}_{N,1})=m+d$, we see that by applying this inequality, the bound of (\ref{nu-bound-sec-4}), and Proposition \ref{Norm-Error-Proposition}
		\[|\det([\Delta_N]_{11}-[\Delta_N]_{12}[\Delta_N]_{22}^{-1}[\Delta_N]_{21})-\det([\Delta_N^0]_{11})|=O(e^{-Nc/4}).\label{nu-add}\]
		We will further need the bound on $\ell$-by-$\ell$ matrices $A$ (see Corollary 2.14 of \cite{Det-Bounds})
		\[|\det(I+A)-1|\le (\ell\|A\|_F+1)^\ell-1.\]
		Employing the formula $[\Delta_N]_{22}=I+[\Delta_N^1]_{22}$, as well as Proposition \ref{Norm-Error-Proposition}, this bound yields that $|\det([\Delta_N]_{22})-1|= O(e^{-Nc/2})$.
		Using this, (\ref{shur-4}), and (\ref{nu-add}), we have that 
		\[\det(\Delta_N)=\det([\Delta_N]_{22})(\det([\Delta_N^0]_{11})+O(e^{-Nc/4}))=\det([\Delta_N^0]_{11})(1+O(e^{-Nc/2}))+O(e^{-Nc/4}).\]
		Finally applying Proposition \ref{Norm-Error-Proposition} to see that $\det([\Delta_N^0]_{11})=O(e^{Nc/4})$, and recalling that $\det([\Delta^0_N]_{11})=\det(\Delta^0_N)$ completes the proof.
	\end{proof}
	The remainder of the section will be spent on the proof of Proposition \ref{Norm-Error-Proposition}. We first note that by Proposition \ref{Commutator-Decom} we have that
	\[\|\Delta^1_N\|_F=\|\Pi_N J_N^{-1}\Pi_N^{\perp}\cal{E}' \Pi_N \|_F\le \|\Pi_N J_N^{-1}\cal{E}' \Pi_N \|_F+\|\Pi_N J_N^{-1}\Pi_N\cal{E}' \Pi_N \|_F,\label{wall}\]
	\[\|\Delta^0_N\|_F=\|\Pi_N J_N^{-1}\Pi_N J_N \Pi_N\|_F\le \|I\|_F+2N\|\Pi_N J_N^{-1}\Pi_N x\Pi_N \|_F+\sum_{i=1}^{m}|\alpha_i|\|\Pi_N J_N^{-1}\ell_iq_i^T\Pi_N\|_F,\label{money}\]
	where here, the transpose $f^T$ is taken with respect to the inner-product on $\cal{P}_N$ inherited from $L^2(w_N)$. Thus it suffices to compute the order of all terms on the right. The terms $\|\Pi_N J_N^{-1}\cal{E}' \Pi_N \|_F$, $\|\Pi_N J_N^{-1}\Pi_N\cal{E}' \Pi_N \|_F$ and $\|\Pi_N J_N^{-1}\Pi_N x\Pi_N \|_F$ will all follow from the same method, which we now illustrate. Let $A$ be an operator on $\cal{P}_N$ with Schwartz kernel $Af(x)=\int G(x,y)f(y)w_N(y)dy$. Then evaluating the Frobenius norm with respect to the basis given by orthogonal polynomials, we see that
	\[\|A\|^2_F=\sum_{i=0}^{N-1}\int \int \int G(x,y)G(x,z)p_i(y)p_i(z)w_N(x)w_N(y)w_N(z)dxdydz=\]\[\int \int \int G(x,y)G(x,z)K_N(y,z)w_N(x)w_N(y)w_N(z)dxdydz.\label{frob-decom}\]
	Let us denote $\cal{K}_N(x,y)=w_N(x)^{1/2}w_N(y)^{1/2}K_N(x,y)$ where $K_N$ is the Christoffel-Darboux kernel. In the sense above, the operator $\Pi_N J_N^{-1}\cal{E}'\Pi_N$ has kernel $G(x,y)=w_N^{-1/2}(x)(\frac{1}{2}\int \cal{K}_N(x,z)\sign(z-y)dz)\cal{E}'(y)w_N^{-1/2}(y)$ on $\cal{P}_N$. We may employ (\ref{frob-decom}) to obtain
	\[\|\Pi_N J_N^{-1}\cal{E}' \Pi_N \|^2_F=\]
	\[\label{Schwartz-kernel}\frac{1}{4}\int \cdots \int \cal{K}_N(x_1,x_2)\sign(x_2-x_3)\cal{E}'(x_3)\cal{K}_N(x_1,x_4)\sign(x_4-x_5)\cal{E}'(x_5)\cal{K}_N(x_3,x_5)\prod_{i=1}^{5}dx_i.\]
	By taking the absolute value of the integrand, we may upper-bound this quantity by
	\[\frac{1}{4}(\int \int \int |\cal{K}_N(x_1,x_2)\cal{K}_N(x_1,x_4)|dx_1dx_2dx_4)(\int \int |\cal{E}'(x_3)\cal{E}'(x_5)\cal{K}_N(x_3,x_5)|dx_3dx_5).\label{Schwartz-kernel-2}\]
	By the Cauchy-Schwarz inequality we have that $|\cal{K}_N(x,y)|\le \cal{K}_N(x,x)^{1/2}\cal{K}_N(y,y)^{1/2}$. Repeatedly applying this bound to (\ref{Schwartz-kernel-2}), and observing that $\int \cal{K}_N(x,x)dx=N$, we see that $\|\Pi_N J_N^{-1}\cal{E}' \Pi_N \|^2_F\le \frac{N}{4}I_1^2I_2^2$,
	where
	\[I_1=\int \cal{K}_N(w,w)^{1/2}dw,\;\;I_2=\int \cal{K}_N(w,w)^{1/2}|\cal{E}'(w)|dw.\]
	To compute these integrals let us denote $\varphi_{N,1}(x)=p_N(x)w_N(x)^{1/2}$ and $\varphi_{N,2}(x)=p_{N-1}(x)w_N(x)^{1/2}$. 
	We note that by Remark \ref{exp-decay-s} for any $\delta>0$ there is $c>0$ small enough so that for $|x|>1+\delta$
	\[\frac{d}{dx}\varphi_{N,i}(x),\varphi_{N,i}(x)=O(\exp(-cNx^2)).\]  
	By the Christoffel-Darboux formula (\ref{chris-2}) and Lemma \ref{Coefficient Lemma New}, we see that $\cal{K}_N(x,x)=O(\exp(-cNx^2))$ uniformly on $|x|>1+\delta$. Now applying the Cauchy-Schwarz inequality, we see that
	\[I_1=\int_{|w|>1+\delta}\cal{K}_N(w,w)^{1/2}dw+\int_{|w|<1+\delta}\cal{K}_N(w,w)^{1/2}dw\le\]
	\[\int_{|w|>1+\delta}\cal{K}_N(w,w)^{1/2}dw+(2+2\delta)^{1/2}(\int \cal{K}_N(w,w)dw)^{1/2}\label{I2-method}.\]
	The integral on the right-hand side of (\ref{I2-method}) is seen to be of order $O(e^{-Nc/4})$ using the above asymptotic, and the second is exactly $(2+2\delta)^{1/2}N^{1/2}$, so that $I_1=O(N^{1/2})$. When applying
	the same trick to $I_2$, the first term is $O(e^{-Nc/4})$, and the latter vanishes, so that $I_2=O(e^{-Nc/4})$, so that $\|\Pi_N J_N^{-1}\cal{E}'\Pi_N\|_F^2=O(e^{-Nc/8})$. The same method works to show that $\|\Pi_N J_N^{-1}\Pi_N\cal{E}' \Pi_N \|_F=O(e^{-Nc/4})$ and that $\|\Pi_N J_N^{-1}\Pi_N x\Pi_N \|_F=O(N^{5})$.
	
	We now focus our attention to $\|\Pi_N J_N^{-1}\ell_iq_i^T\Pi_N\|_F$. In this case we see that, recalling (\ref{Evaulation-Lemma}) and applying again the inequality $|\cal{K}_N(x,y)|\le \cal{K}_N(x,x)^{1/2}\cal{K}_N(y,y)^{1/2}$,
	\[\|\Pi_N J_N^{-1}\ell_iq_i^T\Pi_N\|^2_F=\]
	\[e^{-2N\lam_i^2}N^{-2\alpha_i}K_N(\lam_i,\lam_i)\int \int K_N(x_1,x_2)J_N^{-1}\ell_i(x_1)J_N^{-1}\ell_i(x_2)w_N(x_1)w_N(x_2)dx_1dx_2\le \]
	\[e^{-2N\lam_i^2}N^{-2\alpha_i}K_N(\lam_i,\lam_i)(\int \cal{K}_N(x,x)^{1/2}|\epsilon(\ell_iw_N^{1/2})(x)|dx)^2,\]
	where here
	\[\epsilon(f)(x)=\frac{1}{2}\int \sign(x-y)f(y)dy.\]
	
	To understand these quantities we will first employ Proposition \ref{Bessel Asymptotic} to write, for $x\in (\lam_i-\delta,\lam_i+\delta)$ and $k=1,2$
	\[\varphi_{N,k}(x)=G_{N,k}(x)\cal{J}_{\alpha_i}(Nf_{\lam_i}(x)),\;\; G_{N,k}(x)=i^{I(k=2)}\pi^{-1/2}e_k^T(I+R_N(x))T_{N,\lam_i}(x).\label{throw-castle}\]
	We also recall some classical asymptotics of Bessel functions (see Chapter 9 of \cite{AS}): for $\nu>-1$ and $x\in (0,\infty)$, we have that \[x^{-\nu}J_{\nu}(x),\;\frac{d}{dx}(x^{-\nu}J_{\nu}(x))=O((1+x)^{-\nu-1/2}),\]
	where both the errors are uniform in $x\in (0,\infty)$. From this, we see that for $\nu>-1/2$ and for $x\in \R$,
	\[x^{-\nu}\cal{J}_{\nu}(x),\; \frac{d}{dx}(x^{-\nu}\cal{J}_\nu(x))=O((1+|x|)^{-\nu+1/2}),\label{moss-tow}\]
	where again, the error term is uniform in $x\in \R$. We will denote $\hat{\varphi}_{N,k}(x)=\varphi_{N,k}(x)|x-\lam_i|^{-\alpha_i}$, which removes the singular part of $\varphi_{N,k}$ around $\lam_i$ coming from $w_N(x)^{1/2}$. Now employing (\ref{moss-tow}) and Remark \ref{T_lam-size-remark} to the representation in (\ref{throw-castle}), we see that for $x\in (\lam_i-\delta,\lam_i+\delta)$ and $k=1,2$, we have that,
	\[\hat{\varphi}_{N,k}(x),\;\frac{d}{dx}\hat{\varphi}_{N,k}(x)=O(N^{\alpha_i+2}),\label{can-asym}\]
	again uniformly in $x$ and compact subsets of $\{\lam_i\}_{i=1}^m\in \{(\lam_1,\cdots \lam_m)\in (-1,1)^m:\lam_1>\cdots >\lam_m\}$.
	Applying this to the Christoffel-Darboux formula (\ref{chris-2}), we obtain that
	\[e^{-2N\lam_i^2}N^{-2\alpha_i}K_N(\lam_i,\lam_i)=O(N^{8}).\]
	We now focus on the term $\epsilon (\ell_iw_N^{1/2})(x)$. We note that
	\[\left|\int \sign(x-y) \frac{\varphi_{N,k}(y)dy}{y-\lam_i}\right|\le \frac{1}{\delta}\int_{|y-\lam_i|>\delta} |\varphi_{N,k}(y)|dy+\left|\int_{|y-\lam_i|<\delta} \sign(x-y) \frac{\varphi_{N,k}(y)}{y-\lam_i}dy\right|.\label{pink}\]
	Using that $\int \varphi_{N,k}^2(y)dy=1$ and proceeding as in (\ref{I2-method}), we may see that the first integral on the right-hand side of (\ref{pink}) is $O(1)$, leading us to bound the second. We have that
	\[\int_{|y-\lam_i|<\delta}\sign(x-y)\frac{\varphi_{N,k}(y)}{y-\lam_i}dy=\int_{|y-\lam_i|<\delta}\sign(x-y)\frac{(\hat{\varphi}_{N,k}(y)-\hat{\varphi}_{N,k}(\lam_i))|y-\lam_i|^{\alpha_i}}{y-\lam_i}dy+\]\[\hat{\varphi}_{N,k}(\lam_i)\int_{|y-\lam_i|<\delta}\sign(x-y)\frac{|y-\lam_i|^{\alpha_i}}{y-\lam_i}dy.\]
	The latter term is $O(N^{\alpha_i+2})$ by (\ref{can-asym}). Additionally, applying again (\ref{can-asym}) and the mean value theorem to $\varphi_{N,k,i}$, we see that
	\[\int_{|y-\lam_i|<\delta}\sign(x-y)\frac{(\hat{\varphi}_{N,k}(y)-\hat{\varphi}_{N,k}(\lam_i))|y-\lam_i|^{\alpha_i}}{y-\lam_i}dy=O(N^{\alpha_i+2}).\]
	These bounds are additionally uniform in $y$, so that in particular we may conclude that for $k=1,2$ that $\epsilon(\varphi_{N,k})(x)=O(N^{\alpha_i+2})$, uniformly in $x\in \R$. By Remark \ref{l and q remark}, we see that $\epsilon(\ell_iw_N^{1/2})(x)=O(N^{\alpha_i+2})$.
	Again proceeding as in (\ref{I2-method}), we obtain that
	\[\int \cal{K}_N(x,x)^{1/2}\epsilon(\ell_iw_N^{1/2})(x)dx=O(N^{\alpha_i+4}).\]
	In conclusion, we obtain that
	\[e^{-2N\lam_i^2}K_N(\lam_i,\lam_i)(\int \cal{K}_N(x,x)^{1/2}\epsilon(\ell_iw_N^{1/2})(x)dx)^2=O(N^{16(1+\alpha_i)}).\]
	In total these results show that the right-hand side of (\ref{wall}) is of order $O(e^{-Nc/4})$, and that the right-hand side of (\ref{money}) is of order $O(N^{16(1+\sum_{i=1}^{m}\alpha_i)})$, which together completes the proof of Proposition \ref{Norm-Error-Proposition}.
	
	\section{Proof of Proposition \ref{W-Integral-Proposition} \label{Integral Section}}
	
	In this section, we will prove Proposition \ref{W-Integral-Proposition}. The evaluation of these integrals is a key technical step in this paper. Away from the points $\{\pm 1\}$ and $\{\lam_i\}_{i=1}^m$ this computation will follow by standard methods for oscillatory integrals (see for example Lemma \ref{DoubleWatsonsLemma}). The remainder of the work is then spent understanding the contributions around the points $\{\pm 1\}$ and $\{\lam_i\}_{i=1}^m$. This will follow from applications of the classical asymptotics for the Bessel and Airy functions and careful analysis to compare these integrals with those in the remainder of the bulk. There is some similarity with methods used in \cite{Universality,UniversalityEdge,UniversalityLaguerre}, particularly for the asymptotics away from the points $\{\lam_i\}_{i=1}^m$. 
	
	To begin, we first note that we may write
	\[\int J_N^{-1}f(x)g(x)w_N(x)dx=\frac{1}{2}\int  \int\sign(x-y)g(x)f(y)w_N(x)^{1/2}w_N(y)^{1/2}dxdy.\label{main-j-iden}\]
	We observe that if functions $f$ and $g$ are supported on disjoint intervals, then we have that
	\[\int \int\sign(x-y)g(x)f(y)w_N(x)^{1/2}w_N(y)^{1/2}dxdy=\pm \int g(x)w_N(x)^{1/2}dx\int f(y)w_N(y)^{1/2}dy.\label{disjoint-support}\]
	
	This will be an important observation. With this identity and a partition of unity, we will reduce the integrals into a sequence of simpler integrals supported over regions where a single asymptotic expression holds. We again fix $(\cal{W},\{\lam_i\}_i,\{\alpha_i\}_i)$ satisfying the conditions of Proposition \ref{main-theorem}, and in all results below, the error will be uniform in a neighborhood of $\{\lam_i\}\in \{(\lam_1,\cdots, \lam_m)\in (-1,1)^m:\lam_1>\cdots >\lam_m\}$. We will employ, as before, the notation $\lam_0=1$ and $\lam_{m+1}=-1$.
	
	\begin{lem}
		\label{Bulk Single Integral of pN}
		Assume that $\phi$ is a smooth function on $\R$ of sub-exponential growth. Assume that there is some $\delta>0$ such that $\phi$ vanishes on $(\lam_i-\delta,\lam_i+\delta)$ for all $0\le i\le m+1$. Then for each $k$ and any $c\in \N$, we have that
		\[\int \phi(x)p_{N-k}(x)w_N(x)^{1/2}dx=O(N^{-c}).\]
	\end{lem}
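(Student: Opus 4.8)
The plan is to use a partition of unity adapted to the singular points $\{\pm 1, \lam_1, \dots, \lam_m\}$ together with the oscillatory asymptotics of $p_{N-k}$ from Section \ref{Asymptotics-Section}. First I would choose a smooth partition of unity $1 = \chi_\infty + \sum_{i=0}^{m+1}\chi_i$ on $\R$, where each $\chi_i$ is supported in a small neighborhood of $\lam_i$ (with $\lam_0 = -1$, $\lam_{m+1}=1$), where $\chi_\infty$ is supported away from all the $\lam_i$, and — crucially — where the neighborhoods supporting the $\chi_i$ are chosen inside the region $(\lam_i-\delta,\lam_i+\delta)$ on which $\phi$ vanishes. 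Then $\phi\chi_i \equiv 0$ for every finite $i$, so only the piece $\phi\chi_\infty$ survives, and it is enough to bound $\int \phi(x)\chi_\infty(x)p_{N-k}(x)w(x)^{1/2}dx$. Splitting further, on $|x|>1+\delta'$ the estimate $p_{N-k}(x)w(x)^{1/2} = O(e^{-Ns(x)/2})$ from \eqref{Outside Asymptotic}, together with $s(x)>0$ and sub-exponential growth of $\phi$, makes that part $O(e^{-Nc})$ for a suitable $c>0$, which is stronger than needed.

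The remaining and main contribution is an integral over a compact subset of $\bigcup_{i=0}^m(\lam_i+\delta', \lam_{i+1}-\delta')$, i.e. a finite union of intervals bounded away from all singular points, on which Proposition \ref{Asym p_N easy} applies. On each such interval, reading off the first component of \eqref{Bulk Asymptotic} gives
\[
p_{N-k}(x)w(x)^{1/2} = \frac{e^{1/2}}{\pi^{1/2}}\,(I+R(x))_{1\cdot}\,T_\infty(x)\,e^{[-ik\arccos(x) - i\pi\sum_{k=i+1}^m\alpha_k]\sigma}\begin{bmatrix} e^{iNs(x)}\\ e^{-iNs(x)}\end{bmatrix},
\]
so that $p_{N-k}(x)w(x)^{1/2} = a(x)e^{iNs(x)} + b(x)e^{-iNs(x)}$ for smooth, $N$-dependent amplitudes $a,b$ that are bounded with bounded derivatives uniformly in $N$ (and locally uniformly in the data), thanks to the bounds on $T_\infty$, $A(x)$, $D(x)$ and $R^{(m)}(x) = O(N^{-1})$. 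Since $s'(x) = -2\sqrt{1-x^2}\neq 0$ on the interior of $(-1,1)$, the phase $Ns(x)$ is non-stationary, and a single integration by parts against the smooth compactly-supported weight $\phi(x)\chi_\infty(x)a(x)$ (respectively $b(x)$) yields a factor $N^{-1}$; iterating the integration by parts $c$ times produces the claimed $O(N^{-c})$, the boundary terms vanishing because $\chi_\infty$ is compactly supported inside the interval. One must also handle the transitions at the cutoffs near $\pm 1$: there the amplitude $(1-x^2)^{-1/4}$ in $A(x)$ blows up, but since $\chi_\infty$ is supported strictly inside $(-1,1)$ away from $\pm 1$ by construction, this never enters; the neighborhoods of $\pm 1$ are covered by $\chi_0,\chi_{m+1}$, on which $\phi$ vanishes.

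The main obstacle is purely bookkeeping rather than conceptual: one must verify that the amplitudes $a,b$ obtained from the matrix asymptotics are genuinely smooth with all derivatives bounded \emph{uniformly in $N$} and locally uniformly in $(\cal{W},\{\lam_i\}_i,\{\alpha_i\}_i)$, so that each integration by parts gains exactly one power of $N$ with a constant controlled uniformly — this is where the uniform bound $R^{(m)}(x) = O(N^{-1})$ from Proposition \ref{Asym p_N easy} is essential, and where one should be slightly careful that differentiating $e^{\pm iNs(x)}$ repeatedly only ever costs powers of $N$ already accounted for. Everything else (the partition of unity, the disjoint-support reductions, the exponentially small tails) is routine.
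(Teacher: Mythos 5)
Your proposal is correct and takes essentially the same approach as the paper: split off the exponentially small contribution for $|x|>1$ via Proposition~\ref{Asym p_N easy}, then on the bulk write $p_{N-k}w^{1/2}$ in the oscillatory form with phase $Ns(x)$ and integrate by parts $c$ times, using the uniform bounds on $R^{(m)}$ together with a partition of unity to localize to a single interval $(\lam_i+\delta,\lam_{i+1}-\delta)$. You are somewhat more explicit about the non-vanishing of $s'$ on $(-1,1)$ and the change of variables needed to reduce to the standard non-stationary-phase estimate, which the paper records only in the linear form $\big|\int e^{-iNx}f(x)\,dx\big|\le N^{-c}\int |f^{(c)}|$, but the argument is otherwise identical.
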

	
	\begin{proof}
		For notational clarity, we will assume that $k=0$. The proof will proceed by applying the results of Section \ref{Asymptotics-Section} on each region. We first note that
		\[\int \phi(x)p_{N}(x)w_N(x)^{1/2}dx=\int_{|x|>1+\delta}\phi(x)p_{N}(x)w_N(x)^{1/2}dx+\int_{|x|<1}\phi(x)p_{N}(x)w_N(x)^{1/2}dx.\]
		We note that by Remark \ref{exp-decay-s} we have that
		\[\int_{|x|>1+\delta}\phi(x)p_{N}(x)w_N(x)^{1/2}dx=O(e^{-NC/2}).\]
		We now focus on the computation of $\int_{|x|<1}\phi(x)p_{N}(x)w_N(x)^{1/2}dx$. If $\phi$ is supported on $(\lam_{i+1}+\delta,\lam_{i}-\delta)$, for some $0\le i\le m$, we may apply Proposition \ref{Asym p_N easy} to rewrite
		\[\int_{|x|<1}\phi(x)p_{N}(x)w_N(x)^{1/2}dx=
		\frac{1}{\pi^{1/2}}\bigg[\int\phi(x)[(I+R_N(x))T_{\infty}(x)]_{11}e^{-i\pi \sum_{k=1}^{i}\alpha_k}e^{iNs(x)}dx+\]\[\int\phi(x)[(I+R_N(x))T_{\infty}(x)]_{12}e^{i\pi \sum_{k=1}^{i}\alpha_k}e^{-iNs(x)}dx\bigg], \label{pN easy term formulae}\]
		where $R_N$ is the error term of Proposition \ref{Asym p_N easy}. We now recall that if $f$ is a smooth function of compact support, then for any $c\in \N$, we have that
		\[|\int e^{-iNx}f(x)dx|\le \frac{1}{N^c}\int |f^{(c)}(x)|dx.\label{integration-by-parts}\]
		This inequality, and a change of variables, will be sufficient to complete the proof. Indeed, let us consider the first term of (\ref{pN easy term formulae}) and denote $f_{N,l}(x)=\phi(x)[(I+R_N(x))T_{\infty}(x)]_{1l}e^{(-1)^li\pi \sum_{k=1}^{i}\alpha_k}$. As we have that for any $l\ge 0$,  $R^{(l)}_N(x)=O(N^{-1})$ uniformly on the support of $\phi$, and recalling that by Remark \ref{det-remark-T}, $T_{\infty}(x)$ is $N$-independent and smooth, we see that for any $c\in \N$, we have that $f_{N,l}^{(c)}(x)=O(1)$ uniformly for $x\in \R$. By (\ref{integration-by-parts}) we see that
		\[|\int f_{N,l}(x) e^{\pm iNs(x)}dx|=|\int (s^{-1})'(y)f_{N,l}(s^{-1}(y))e^{\pm iNy}dy|\le \frac{1}{N^c}\int| \frac{d^c}{dy^c}\left((s^{-1})'(y)f_{N,l}(s^{-1}(y))\right)|dy.\]
		As $s$ is smooth and non-degenerate on the support of $f_N$, we see that the integrand of the right-hand side is uniformly $O(1)$, and supported inside of the compact set $[s(1),s(-1)]=[0,\pi]$, so that terms of (\ref{pN easy term formulae}) are of order $O(N^{-c})$. Applying this argument to each region completes the proof.
	\end{proof}
	
	\begin{lem}
		\label{Bulk double Integral of pN}
		Let $\phi$ and $\varphi$ be smooth functions of subexponential growth on $\R$. Assume that there is some $\delta>0$ such that both $\phi$ and $\varphi$ vanishes on $(\lam_i-\delta,\lam_i+\delta)$ for all $0\le i\le m+1$.
		Then for each $n$ and $m$, we have that
		\[\frac{1}{2}\int \int \phi(x)\varphi(y)p_{N-n}(x)p_{N-m}(y)\Sign(x-y)w_N(x)^{1/2}w_N(y)^{1/2}dxdy=\]\[\frac{1}{2\pi N}\int_{-1}^{1} \frac{\phi(x)\varphi(x)}{(1-x^2)}\sin((m-n)\arcos(x))dx+O(N^{-2}).\]
	\end{lem}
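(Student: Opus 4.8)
The plan is to mimic the proof of Lemma \ref{Bulk Single Integral of pN}: localize with a partition of unity, insert the bulk asymptotics of Proposition \ref{Asym p_N easy}, and tame the non-smooth kernel $\Sign(x-y)$ by a single integration by parts that picks out the diagonal $\{x=y\}$. First, exactly as in Lemma \ref{Bulk Single Integral of pN}, the part of the double integral where $|x|>1+\delta$ or $|y|>1+\delta$ is $O(e^{-cN})$: there $|p_{N-k}(x)w(x)^{1/2}|=O(e^{-Ns(x)/2})$ with $s(x)\ge Cx^2$, while on the complementary bounded set $|p_{N-k}w^{1/2}|$ is at most polynomially large. Next, pick a smooth partition of unity on a neighbourhood of $\supp\phi\cup\supp\varphi$ subordinate to the $m+1$ bulk intervals $(\lam_i+\delta,\lam_{i+1}-\delta)$, together with small neighbourhoods of the $\lam_i$ and of $\pm 1$ on which $\phi$ and $\varphi$ vanish (so those pieces contribute nothing). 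If two pieces lie in \emph{different} bulk intervals, then $\Sign(x-y)$ is constant on the product of their supports and the double integral factors as a product of two single integrals, each $O(N^{-c})$ by Lemma \ref{Bulk Single Integral of pN}. So it remains to compute, for a single bulk interval $I$, the contribution with $\phi$ and $\varphi$ both supported in $I$.

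On $I$ we write, from \eqref{Bulk Asymptotic}, $p_{N-n}(x)w(x)^{1/2}=\frac{e^{1/2}}{\pi^{1/2}}\big(g_n^{+}(x)e^{iNs(x)}+g_n^{-}(x)e^{-iNs(x)}\big)$, where $g_n^{+}$ and $g_n^{-}$ are, respectively, the entries $[(I+R)T_\infty]_{1,1}$ and $[(I+R)T_\infty]_{1,2}$ multiplied by $e^{-in\arccos(x)}$, resp.\ $e^{in\arccos(x)}$, and by $x$-independent unimodular factors; they are smooth and, up to $O(N^{-1})$ together with all derivatives, independent of $N$. Multiplying the expansions for $x$ and for $y$ produces four terms with phases $e^{\pm iNs(x)\pm iNs(y)}$. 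For the two co-rotating terms (phases $e^{\pm iN(s(x)+s(y))}$), one integration by parts in $x$, using $\frac{d}{dx}\Sign(x-y)=2\delta(x-y)$ and $s'(x)=-2\sq{1-x^2}\neq 0$ on $I$, produces a diagonal single integral $\frac{1}{N}\int(\cdots)e^{\pm 2iNs(y)}\,dy$, which is $O(N^{-c})$ for every $c$ by ordinary (nonstationary) integrations by parts, plus $\frac1N$ times a double integral of the same shape which is $O(N^{-1})$; so these terms are $O(N^{-2})$. For the two counter-rotating terms, with amplitudes $g_n^{+}(x)g_m^{-}(y)$ and $g_n^{-}(x)g_m^{+}(y)$ and phases $e^{\pm iN(s(x)-s(y))}$, the same integration by parts extracts their diagonal contributions, which combine to $\frac{2e}{i\pi N}\int\phi(y)\varphi(y)\,\frac{g_n^{-}(y)g_m^{+}(y)-g_n^{+}(y)g_m^{-}(y)}{s'(y)}\,dy$, again up to $O(N^{-2})$.

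It remains to simplify. Since $T_\infty=AD^{-\sigma}$ with $D$ a scalar on $I$, the Szegő factor cancels in the product, so $[(I+R)T_\infty]_{1,1}[(I+R)T_\infty]_{1,2}=A_{1,1}(y)A_{1,2}(y)+O(N^{-1})$, and a short computation from \eqref{B in Bulk} gives $A_{1,1}(y)A_{1,2}(y)=\frac{1}{2\sq{1-y^2}}$; likewise the constant phases cancel, so $g_n^{-}g_m^{+}-g_n^{+}g_m^{-}=A_{1,1}A_{1,2}\big(e^{i(n-m)\arccos(y)}-e^{-i(n-m)\arccos(y)}\big)=2i\,A_{1,1}A_{1,2}\,\sin\!\big((n-m)\arccos(y)\big)$. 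Feeding this and $s'(y)=-2\sq{1-y^2}$ into the previous display collapses it to $\frac{e}{N\pi}\int_I\frac{\phi(y)\varphi(y)}{1-y^2}\sin\!\big((n-m)\arccos(y)\big)\,dy+O(N^{-2})$. (As a check, for $n=m$ this vanishes, as it should: the integrand of the original double integral is then antisymmetric in $x,y$ apart from the factors $\phi,\varphi$.) Summing over the $m+1$ bulk intervals, and using that $\phi\varphi$ vanishes on the excised $\delta$-neighbourhoods of the $\lam_i$ and of $\pm1$, yields the asserted identity.

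The main obstacle is the remainder control: one must establish that the map $F\mapsto\int\!\!\int F(x,y)\,\Sign(x-y)\,e^{iN(s(x)-s(y))}\,dx\,dy$ sends any $F$ smooth and compactly supported inside a bulk interval to a quantity of size $O(N^{-1})$ --- this is what upgrades the leftover ``$\frac1N$ times a double integral'' terms to $O(N^{-2})$. This follows from one further integration by parts in $x$: the new diagonal term is $O(N^{-1})$, while the surviving double integral is crudely bounded by the $L^1$-norms of $F$ and of $\partial_x(F/s')$, hence $O(1)$. Everything else is routine bookkeeping: the $O(N^{-1})$ errors carried by the $(I+R)$ factors and by the constant asymptotics of Remark \ref{Coefficient Remark} enter the final answer only at order $N^{-2}$, and all implied constants can be taken uniform over compact parameter ranges, as in Theorem \ref{main-theorem}.
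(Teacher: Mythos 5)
Your proposal is correct and takes essentially the same route as the paper: kill the $|x|>1+\delta$ region by exponential decay, insert the bulk asymptotics from Proposition~\ref{Asym p_N easy}, identify that only the counter-rotating phases $e^{\pm iN(s(x)-s(y))}$ contribute at order $N^{-1}$, extract their diagonal contribution, and simplify using $(T_\infty)_{11}(T_\infty)_{12}=\frac{D_\infty^2}{2\sqrt{1-x^2}}$ and $s'(x)=-2\sqrt{1-x^2}$. The only difference is that where the paper cites the two-variable oscillatory-integral estimate (its Lemma~\ref{DoubleWatsonsLemma}, taken from \cite{AsymptoticExpansionsOfIntegrals}), you re-derive it by hand with a single integration by parts using $\partial_x\Sign(x-y)=2\delta(x-y)$, and you explicitly note that pieces of the partition of unity living in different bulk intervals have $\Sign(x-y)$ constant and hence factor into products of single integrals that are $O(N^{-c})$; both are details the paper leaves implicit.
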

	
	\begin{proof}
		Applying a partition of unity and linearity, we may assume we are in the case where $\phi$ is supported on $(\lam_{i+1}+\delta,\lam_{i}-\delta)$ for some $i$, or on $(1+\delta,\infty)\cup (-\infty,-1-\delta)$, and similarly for $\psi$. In the case where $\psi$ and $\phi$ have disjoint support, we see by (\ref{disjoint-support}) that
		\[|\int \int \phi(x)\varphi(y)p_{N-n}(x)p_{N-m}(y)\Sign(x-y)w_N(x)^{1/2}w_N(y)^{1/2}dxdy|=
		\]\[|\int \phi(x)p_{N-n}(x)w_N(x)^{1/2}dx\int \varphi(y)p_{N-m}(y)w_N(y)^{1/2}dy|.\]
		By Lemma \ref{Bulk Single Integral of pN}, both of these integrals on the right-hand side are $O(N^{-2})$. Thus we may assume that we are in the case where either both $\psi$ and $\phi$ are supported on $(\lam_{i+1}+\delta,\lam_{i}-\delta)$ for some $i$, or on $(1+\delta,\infty)\cup (-\infty,-1-\delta)$.
		In the latter case, by the Cauchy-Schwarz inequality, we have that
		\[|\int\int_{\min(|x|,|y|)>1+\delta}\phi(x)\varphi(y)p_{N-n}(x)p_{N-m}(y)\Sign(x-y)w_N(x)^{1/2}w_N(y)^{1/2}dxdy|\le\]\[(\int_{|x|>1+\delta}p_{N-n}(x)^2\phi(x)^2w_N(x)dx\int_{|y|>1+\delta}p_{N-m}(y)^2\varphi(y)^2w_N(y)dy)^{1/2}\label{druid}\]
		Then applying Remark \ref{exp-decay-s} as in the proof of Lemma \ref{Bulk Single Integral of pN}, we see that the right-hand side of (\ref{druid}) is $O(e^{-NC/4})$. 
		
		Thus we may assume that $\phi$ and $\varphi$ are both supported on $(\lam_{i+1}+\delta,\lam_{i}-\delta)$ for some $i$. To compute this integral, we will use the following lemma (see Chapter 8.4 of \cite{AsymptoticExpansionsOfIntegrals}).
		\begin{lem}
			\label{DoubleWatsonsLemma}
			Let $h$ be a smooth, compactly supported function on $\R$. Then we have that
			\[\frac{1}{2}\int \int h(x,y)e^{\pm iN(x+y)}\sign(x-y)dxdy=O(N^{-2})\]
			\[\frac{1}{2}\int \int h(x,y)e^{\pm iN(x-y)}\sign(x-y)dxdy=\pm \frac{i}{N}\int h(x,x)dx+O(N^{-2}).\]
			More specifically, the error may be bounded by
			$\frac{1}{N^2}(\int\int \|\nabla h(x,y)\|dxdy+4\int|\frac{d}{dx}h(x,x)|dx)$.
		\end{lem}
		
		Using the asymptotics of Proposition \ref{Bulk Asymptotic}, we may write
		\[\frac{1}{2}\int \int \phi(x)\varphi(y)p_{N-n}(x)p_{N-m}(y)\Sign(x-y)w_N(x)^{1/2}w_N(y)^{1/2}dxdy=\]
		\[\frac{1}{2\pi}\sum_{j,k=1,2}\int \int f_{N,j,k}(x,y)e^{-(-1)^jNis(x)-(-1)^kNis(y)}\sign(x-y)dxdy,\]
		where here 
		\[f_{N,j,k}(x,y)=\phi(x)\varphi(y)[(I+R_N(x))T_{\infty}(x)]_{1j}[(I+\bar{R}_N(y))T_{\infty}(y)]_{1k}\times \]\[\exp((-1)^ji(n\arcos(x)+\pi \sum_{l=1}^{i}\alpha_l)+(-1)^ki(m\arcos(y)+\pi \sum_{l=1}^{i}\alpha_l)),\]
		and $R_N$ and $\bar{R}_N$ denote the error terms in Proposition \ref{Asym p_N easy} for the case of $n$ and $m$, respectively. Recalling Remark \ref{det-remark-T} as before, as well as the bound $R^{(c)}_N(x),\bar{R}^{(c)}_N(y)=O(N^{-1})$ for any $c\in \N$, we see that for $j,k\in \{1,2\}$, we have that $f_{N,j,k}(x,y)=O(1)$ and $\|\nabla f_{N,j,k}(x,y)\|=O(1)$, both uniformly in $x,y\in \R$.
		From this, we see that \[\|\nabla ((s^{-1})'(x)(s^{-1})'(y)f_{N,j,k}(s^{-1}(x),s^{-1}(y)))\|=O(1)\label{double-bulk-error}\] uniformly in $x,y\in\R$. Thus applying Lemma \ref{DoubleWatsonsLemma} and a change of variables with respect to $s$, and recalling that $s'(x)<0$ for $x\in (-1,1)$, so that $\sign(s(x)-s(y))=-\sign(x-y)$, we see that
		\[\frac{1}{2\pi}\sum_{j,k=1,2}\int \int f_{N,j,k}(x,y)e^{-(-1)^jNis(x)-(-1)^kNis(y)}\sign(x-y)dxdy=\]
		\[-\frac{1}{2\pi}\sum_{j,k=1,2}\int \int |(s^{-1})'(x)(s^{-1})'(y)|f_{N,j,k}(s^{-1}(x),s^{-1}(y))e^{-(-1)^jNix-(-1)^kNiy}\sign(x-y)dxdy=\]
		\[-\frac{i}{\pi N}\int \frac{f_{N,1,2}(x,x)}{|s'(x)|}dx+\frac{i}{\pi N}\int \frac{f_{N,2,1}(x,x)}{|s'(x)|}dx+O(N^{-2}).\label{bulk-error-ignore}\] We now observe that as $R_N(x),\bar{R}_N(x)=O(N^{-1})$, we have that
		\[f_{N,j,k}(x,y)=\phi(x)\varphi(y)[T_{\infty}(x)]_{1j}[T_{\infty}(y)]_{1k}e^{(-1)^ji(n\arcos(x)+\pi \sum_{l=1}^{i}\alpha_l)+(-1)^ki(m\arcos(y)+\pi \sum_{l=1}^{i}\alpha_l)}+O(N^{-1}).\]
		We may express the right-hand side of (\ref{bulk-error-ignore}) as 
		\[\frac{i}{\pi N}\int \int \frac{\phi(x)\varphi(x)}{|s'(x)|}[T_{\infty}(x)]_{11}[T_{\infty}(x)]_{12}(e^{i(n-m)\arcos(x)}-e^{-i(n-m)\arcos(x)})dx+O(N^{-2}).\]
		Combining this with the identities $[T_{\infty}(x)]_{11}[T_{\infty}(x)]_{12}=\frac{1}{2(1-x^2)^{1/2}}$ and $s'(x)=-2\sq{1-x^2}$, we obtain that
		\[\frac{i}{4\pi N}\int_{-1}^{1} \frac{\phi(x)\varphi(x)}{(1-x^2)}(e^{i(n-m)\arcos(x)}-e^{-i(n-m)\arcos(x)})dx+O(N^{-2})=\]\[\frac{1}{2\pi N}\int_{-1}^{1} \frac{\phi(x)\varphi(x)}{(1-x^2)}\sin((m-n)\arcos(x))dx+O(N^{-2}).\]
		Considered altogether, these results yield the desired claim.
	\end{proof}
	
	We will now begin to compute integrals of functions supported around $\lam_i$ for $1\le i\le m$. It will be convenient to prove a result to match the behavior of the asymptotics of Proposition \ref{Asym p_N easy} and \ref{Asym p_N Bessel} over their overlapping regions.
	
	We recall the asymptotics for the Bessel function (see Chapter 9 of \cite{AS}):
	\[J_{\nu}(x)=\sq{\frac{2}{\pi x}}\bigg[\cos(x-\frac{\nu\pi}{2}-\frac{\pi}{4})+O(x^{-1})\bigg]\label{BesselAsymptotic}.\]
	By employing this expansion, we obtain that with $\cal{J}_{\alpha}$ defined as in (\ref{calJ-def}), we have that,
	\[\cal{J}_{\alpha}(x)=(I+O(x^{-1}))\cal{I}_{\alpha}(x),\quad \cal{I}_{\alpha}(x)=\begin{bmatrix}\sign(x)\sq{2} \sin(|x|-\frac{\alpha\pi}{2})\\ \sq{2}\cos(|x|-\frac{\alpha\pi}{2}) \end{bmatrix},\label{eqn:5.6}\]
	uniformly in $x\in \R$. We will make use of the following matching computation.
	\begin{lem}
		\label{Bessel Matching Lemma}
		For any $q\in \R$ and $x\in (-1,1)$, we have that
		\[T_{N,k,\lam_j}(x)\cal{I}_{\alpha_j+q}((N-k)(s_{N,k}(\lam_j)-s_{N,k}(x)))=\]\[T_{\infty}(x)e^{(-i\pi\sum_{k=1}^{j-1}\alpha_k-i\pi \alpha_jI(x<\lam_j))\sigma}e^{i\sign(x-\lam_j)\frac{\pi}{2} q\sigma}\begin{bmatrix}e^{i(N-k)s_{N,k}(x)}\\ e^{-i(N-k)s_{N,k}(x)} \end{bmatrix}.\label{bessel-matching-lemma-eqn}\]
	\end{lem}
	\begin{proof}
		We observe that as $s_{N,k}$ is monotonically decreasing, if we denote $\pm=\sign(x-\lam_j)=\sign(s_{N,k}(\lam_j)-s_{N,k}(x))$ then
		\[\cal{I}_{\alpha_j+q}((N-k)(s_{N,k}(\lam_j)-s_{N,k}(x)))=\begin{bmatrix}\sq{2}\sin((N-k)(s_{N,k}(\lam_j)-s_{N,k}(x))\mp(\alpha_j+q)\frac{\pi}{2})\\ \sq{2}\cos((N-k)(s_{N,k}(\lam_j)-s_{N,k}(x))\mp (\alpha_j+q)\frac{\pi}{2})\end{bmatrix}.\]
		We also note that for any $a,b\in \R$, we have that
		\[e^{-i\frac{\pi}{4}}e^{-i\frac{\pi}{4}\sigma}\frac{1}{\sq{2}}\begin{bmatrix}1&i\\i&1\end{bmatrix}\begin{bmatrix}\sq{2}\sin(b-a)\\ \sq{2}\cos(b-a)\end{bmatrix}=e^{-i\frac{\pi}{4}}e^{-i\frac{\pi}{4}\sigma}\begin{bmatrix}ie^{i(a-b)}\\e^{-i(a-b)}\end{bmatrix}=e^{ia\sigma}\begin{bmatrix}e^{-ib}\\e^{ib}\end{bmatrix},\]
		so that we see that
		\[e^{-i\frac{\pi}{4}}e^{-i\frac{\pi}{4}\sigma}\frac{1}{\sq{2}}\begin{bmatrix}1&i\\i&1\end{bmatrix}\cal{I}_{\alpha_j+q}((N-k)(s_{N,k}(\lam_j)-s_{N,k}(x)))=e^{\pm \frac{i\pi}{2}(\alpha_j+q)\sigma}e^{-i(N-k)s_{N,k}(\lam_j)\sigma}\begin{bmatrix}e^{i(N-k)s_{N,k}(x)}\\e^{-i(N-k)s_{N,k}(x)}\end{bmatrix}.\label{secret-st}\]
		Multiplying both sides of (\ref{secret-st}) by $T_{\infty}(x)e^{i\frac{\rho_{N,k,j}}{2}\sigma}$, and recalling the definitions of $T_{N,k,\lam_j}$ given in (\ref{T_lam-def}), we see that the left-hand and right-hand sides simplify to the left-hand and right-hand sides of (\ref{bessel-matching-lemma-eqn}). 
	\end{proof}
	We will also need the following elementary result, which will follow by integration by parts.
	\begin{lem}
		\label{integration-by-parts-lemma}
		Let $f,g:[0,\infty)\to [0,\infty)$ be monotone increasing smooth bijections, with everywhere positive derivatives on $(0,\infty)$. Let $h,l$ be smooth functions of compact support and let $F,G$ be continuous functions, all defined on $[0,\infty)$. Then we have that
		\[\frac{1}{2}\int_0^{\infty} \int_0^{\infty} h(x)l(y)F(f(x))G(g(y))\sign(x-y)dxdy=I+II+III,\label{integration-by-parts-formula}\]
		where here
		\[I=\int_0^{\infty}F(f(x))(\int_0^{g(x)}G(z)dz)\frac{h(x)l(x)}{g'(x)}dx,\]
		\[II=\int_0^{\infty}(\int_0^{f(x)}F(z)dz)(\int_0^{g(x)}G(z)dz)\frac{h(x)}{f'(x)}\frac{d}{dx}(\frac{l(x)}{g'(x)})dx,\]
		\[III=\frac{1}{2}\int_0^{\infty} \int_0^{\infty}(\int_0^{f(x)}F(z)dz)(\int_0^{g(y)}G(z)dz)\frac{d}{dx}(\frac{h(x)}{f'(x)})\frac{d}{dy}(\frac{l(y)}{g'(y)})\sign(x-y)dxdy.\]
	\end{lem}
	\begin{proof}
		We first introduce the functions $\bar{h}(x)=h(f^{-1}(x))(f^{-1})'(x)$ and $\bar{l}(x)=l(g^{-1}(x))(g^{-1})'(x)$, so that we may write the left-hand side of (\ref{integration-by-parts-formula}) as
		\[\frac{1}{2}\int_0^{\infty} \int_0^{\infty} \bar{h}(u)\bar{l}(v)F(u)G(v)\sign(f^{-1}(u)-g^{-1}(v))dudv.\label{xan-ig}\]
		We observe that by monotonicity of $g$ we have that $\sign(f^{-1}(u)-g^{-1}(v))=\sign(g(f^{-1}(u))-v)$, and furthermore by integration by parts we see that for $u>0$
		\[\frac{1}{2}\int_0^{\infty}\bar{l}(v)G(v)\sign(g(f^{-1}(u))-v)dv=\frac{1}{2}\int_0^{g(f^{-1}(u))}\bar{l}(v)G(v)dv-\frac{1}{2}\int_{g(f^{-1}(u))}^{\infty}\bar{l}(v)G(v)dv=\]
		\[\bar{l}(g(f^{-1}(u)))(\int_0^{g(f^{-1}(u))}G(z)dz)-\frac{1}{2}\int_0^{\infty}\bar{l}'(v)(\int_0^{v}G(z)dz)\sign(g(f^{-1}(u))-v)dv.\]
		Applying this, we see that (\ref{xan-ig}) may be rewritten as the sum $I'+II'$ where
		\[I'=\int_0^{\infty}\bar{h}(u)F(u)\bar{l}(g(f^{-1}(u)))(\int_0^{g(f^{-1}(u))}G(z)dz)du,\]
		\[II'=-\frac{1}{2}\int_0^{\infty}\int_0^{\infty}\bar{h}(u)F(u)\bar{l}'(v)(\int_0^{v}G(z)dz)\sign(g(f^{-1}(u))-v)dudv.\]
		Again applying a change of variables with respect to $f$, and observing that $\bar{l}(g(y))=l(y)/g'(y)$, we see that $I=I'$. Similarly, noting that \[\bar{l}'(g(y))=\frac{l'(y)}{(g'(y))^2}-\frac{l(y)g''(y)}{(g'(y))^3}=\frac{1}{g'(y)}\frac{d}{dy}\left(\frac{l(y)}{g'(y)}\right)\]
		we see that
		\[II'=-\frac{1}{2}\int_0^{\infty}\int_0^{\infty}F(f(x))(\int_0^{g(y)}G(z)dz)h(x)\frac{d}{dy}(\frac{l(y)}{g'(y)})\sign(x-y)dxdy.\]
		By applying essentially the same argument, integrating by parts with respect to $x$ instead of $y$, we may show that $II'=II+III$. More specifically, we may observe that if we swap the dummy-variables $(x,y)$ in $II'$, then $II'$ is just the left-hand side of (\ref{integration-by-parts-formula}) with a modified set of functions, so repeating the above argument, one may indeed confirm that $II'=II+III$.
	\end{proof}
	We are now able to state the appropriate generalizations of the above lemmas to the case of functions supported around $\lam_i$. 
	\begin{lem}
		\label{both Bessel Lemma}
		For each $1\le i\le m$, there exists $\delta>0$, such that if $\phi$ and $\varphi$ are smooth
		functions supported on $(\lam_i-\delta,\lam_i+\delta)$, then we have for any $n$ and $m$ that
		\[\int \phi(x)p_{N-n}(x)w_N(x)^{1/2}dx=O(N^{-1}),\]
		\[\frac{1}{2}\int\int  \phi(x)\varphi(y)p_{N-n}(x)p_{N-m}(y)\sign(x-y)w_N(x)^{1/2}w_N(y)^{1/2}dxdy=\]\[\frac{1}{2\pi N}\int_{-1}^{1} \frac{\phi(x)\varphi(x)}{(1-x^2)}\sin((m-n)\arcos(x))dx+O(N^{-2}).\]
	\end{lem}
	
	\begin{proof}
		For convenience, let us denote $\lam=\lam_i$ and $\alpha=\alpha_i$ for this proof. We choose $\delta>0$ so the asymptotics of Proposition \ref{Asym p_N Bessel} hold on $(\lam-2\delta,\lam+2\delta)$. We recall the following lemma due to \cite{BesselInt}.
		\begin{lem}
			\label{SingleBesselLemma}
			Let $h$ be a smooth, compactly supported function, and assume that $\mu+\nu>-1$. Then we have that
			\[\int_{0}^{\infty}J_{\nu}(Nt)t^{\mu}h(t)dt-D(\nu,\mu)\frac{h(0)}{N^{\mu+1}}=O(N^{-\mu-2}),\;\;\;D(\nu,\mu)=2^{\mu}\frac{\Gamma((\mu+\nu+1)/2)}{\Gamma((\nu-\mu+1)/2)}.\label{SingleBesselEqn}\]
			More specifically, the error is bounded by:
			$\frac{C_{\mu,\nu}}{N^{\mu+2}}(\sum_{i=0}^{\ceil{\mu+3/2}}\int_0^{\infty} |h^{(i)}(x)|dx)$
			for some absolute constant $C_{\mu,\nu}>0$.
		\end{lem}
		\noindent
		From Proposition \ref{Asym p_N Bessel}, we may write
		\[\phi(x)p_{N-n}(x)w_N(x)^{1/2}=h_{N,1}(x)\sign(x-\lam)\sq{\pi |(N-n)f_{N,n,\lam}(x)|}J_{\alpha+1/2}((N-n)f_{N,n,\lam}(x))+\]\[h_{N,2}(x)\sq{\pi |(N-n)f_{N,n,\lam}(x)|}J_{\alpha-1/2}((N-n)f_{N,n,\lam}(x)),\]
		where here $h_{N,k}(x)=\phi(x)\frac{1}{\pi^{1/2}}[(I+R_N(x))T_{N,n,\lam}(x)]_{1k}$. We recall from Remark \ref{T_lam-size-remark} that for $c\in \N$ and $i,j\in \{1,2\}$, we have that $[T_{N,n,\lam}^{(c)}(x)]_{ij}=O(1)$ uniformly in $x\in (\lam-\delta,\lam+\delta)$. As we also have that $R^{(c)}(x)=O(N^{-1})$ uniformly, we see that $h_{N,k}^{(c)}(x)=O(1)$ for $c\in \N$. From this, we see from Lemma \ref{SingleBesselLemma} and a change of coordinates that there is $C$, only dependant on $\alpha$, such that
		\[|\int_{\lam}^{\lam\pm \delta}\phi(x)p_{N-n}(x)w_N(x)^{1/2}dx|\le N^{-1}\pi^{1/2}(D(\alpha+1/2,1/2)|h_{N,1}(\lam)|+\]\[D(\alpha-1/2,1/2)|h_{N,2}(\lam)|)|(f_{N,n,\lam}^{-1})'(0)|+\frac{C}{N^2}\sum_{k=1,2}\sum_{l=0}^{2}\int_0^{\infty}|\frac{d^l}{dx^{l}}\left((f_{N,n,\lam}^{-1})'(x)h_{N,k}(f_{N,n,\lam}^{-1}(x))\right)|dx.\label{floyd}\]
		We see as above that the integrands on the right-hand side of (\ref{floyd}) are $O(1)$, so we see that 
		\[\int_{\lam}^{\lam\pm \delta}\phi(x)p_{N-n}(x)w_N(x)^{1/2}dx=O(N^{-1}),\label{Half-Single-Integral-Bessel}\]
		which establishes the first claim.
		
		The case of double integrals will require more care. We note that by (\ref{disjoint-support}) and (\ref{Half-Single-Integral-Bessel}), we have that
		\[\frac{1}{2}\int_{\lam}^{\lam \pm \delta} \int_{\lam}^{\lam\mp \delta} \phi(x)\varphi(y)p_{N-n}(x)p_{N-m}(y)\sign(x-y)w_N(x)^{1/2}w_N(y)^{1/2}dxdy=O(N^{-2}),\]
		as we may split this double integral into a product of the single variable integrals dealt with above. Thus by breaking the double-integral into four regions, we see that
		\[\frac{1}{2}\int \int \phi(x)\varphi(y)p_{N-n}(x)p_{N-m}(y)\sign(x-y)w_N(x)^{1/2}w_N(y)^{1/2}dxdy=\]\[\frac{1}{2}\int_{\lam}^{\infty}\int_{\lam}^{\infty} \phi(x)\varphi(y)p_{N-n}(x)p_{N-m}(y)\sign(x-y)w_N(x)^{1/2}w_N(y)^{1/2}dxdy+\]\[\frac{1}{2}\int^{\lam}_{-\infty}\int^{\lam}_{-\infty}\phi(x)\varphi(y)p_{N-n}(x)p_{N-m}(y)\sign(x-y)w_N(x)^{1/2}w_N(y)^{1/2}dxdy+O(N^{-2}).\label{m4}\]
		
		We compute the first integral, with the other case being identical. For notational ease, we will denote $g_{N,k}(x)=(N-k)f_{N,k,\lam}(x)$. In view of Proposition \ref{Asym p_N Bessel}, we further denote $h_{N,i}(x)=\phi(x)[(I+R_N(x))T_{N,n,\lam}(x)]_{1i}$ and $l_{N,i}(x)=\varphi(x)[(I+\bar{R}_N(x))T_{N,m,\lam}(x)]_{1i}$, where $R_N$ and $\bar{R}_N$ denote the error terms in Proposition \ref{Asym p_N Bessel} for case of $k=n$ and $k=m$, respectively. Thus by employing Proposition \ref{Asym p_N Bessel}, we may rewrite the first integral on the right-hand side of (\ref{m4}) as
		\[\frac{1}{2\pi}\sum_{k,l=1,2}\int_{\lam}^{\infty}\int_{\lam}^{\infty}h_{N,k}(x)[\cal{J}_{\alpha}(g_{N,n}(x))]_kl_{N,l}(y)[\cal{J}_{\alpha}(g_{N,m}(y))]_l\sign(x-y)dxdy.\label{Oct1}\]
		We now observe that $f_{N,k,\lam}'(x)=2\eta^{-1}_k\sq{1-\eta_k^{-2}x^2}$, and that $f_{N,k,\lam}(\lam)=0$. Thus by Lemma \ref{integration-by-parts-lemma} we see that for $k$ fixed and $N$ sufficiently large,  we may rewrite (\ref{Oct1}) as $I+II+III$ where
		\[I=\frac{1}{\pi}\sum_{k,l=1,2}\int_\lam^{\infty} [\cal{J}_{\alpha}(g_{N,n}(x))]_k(\int_\lam^{g_{N,m}(x)}[\cal{J}_{\alpha}(z)]_ldz)\frac{h_{N,k}(x)l_{N,l}(x)}{g_{N,m}'(x)}dx,\]
		\[II=\frac{1}{\pi}\sum_{k,l=1,2}\int_\lam^{\infty} (\int _\lam ^{g_{N,n}(x)}[\cal{J}_{\alpha}(z)]_kdz)(\int_\lam^{g_{N,m}(x)}[\cal{J}_{\alpha}(z)]_ldz)\frac{h_{N,k}(x)}{g_{N,n}'(x)}\frac{d}{dx}(\frac{l_{N,l}(x)}{g_{N,m}'(x)})dx,\]
		\[III=\frac{1}{2\pi}\sum_{k,l=1,2}\int_\lam^{\infty}\int_\lam^{\infty} (\int _\lam^{g_{N,n}(x)}[\cal{J}_{\alpha}(z)]_kdz)(\int_\lam^{g_{N,m}(y)}[\cal{J}_{\alpha}(z)]_ldz)\times\]\[(\frac{d}{dx}\frac{h_{N,k}(x)}{g_{N,n}'(x)})(\frac{d}{dy}\frac{l_{N,l}(y)}{g_{N,m}'(y)})\sign(x-y)dxdy.\]
		
		We now recall another important asymptotic for understanding these terms. Assuming that $\ell+\nu>-1$, we may define
		\[J_{\nu,\ell}(x)=\int_0^{x}y^{\ell}J_{\nu}(y)dy,\quad  J_{\nu,\ell}(x)=D(\nu,\ell)-I_{\nu,\ell}(x).\]
		We have the following asymptotics (see Chapter 2 of \cite{BesselInt})
		\[I_{\nu,\ell}(z)=z^{\ell-1/2}\sq{\frac{2}{\pi}}\cos(z-\frac{\nu \pi}{2}+\frac{\pi}{4})+O(z^{\ell-3/2}),\quad z>1;\quad I_{\nu,\ell}(z)=O(1),\quad z\le 1 \label{IntegralAsymptotic}.\]
		From these asymptotics, we see that for $k,l=1,2$ \[\int_\lam^{g_{N,n}(x)}[\cal{J}_{\alpha}(z)]_kdz=O(1),\;\; \int_\lam^{g_{N,m}(x)}[\cal{J}_{\alpha}(z)]_ldz=O(1).\label{ig-ig2}\]
		As $h_{N,k}(x),l_{N,l}(x),h_{N,k}'(x),l_{N,l}'(x)=O(1)$ uniformly, and as $g_{N,n}(x)=(N-k)f_{N,n,\lam}(x)$, we thus see that $II,III=O(N^{-2})$.
		
		We may further rewrite $I$ as the sum of two terms
		\[I'=-\sum_{k,l=1,2}\int_\lam^{\infty} \sq{g_{N,n}(x)}J_{\alpha-(-1)^k1/2}(g_{N,n}(x))I_{\alpha-(-1)^l1/2,1/2}(g_{N,m}(x))\frac{h_{N,k}(x)l_{N,l}(x)}{g_{N,m}'(x)}dx\]
		\[II'=\sum_{k,l=1,2}D(\alpha-(-1)^l1/2,1/2)\int_\lam^{\infty} J_{\alpha-(-1)^k1/2}(g_{N,n}(x))\frac{h_{N,k}(x)l_{N,l}(x)}{g_{N,m}'(x)}dx.\]
		We have that $II'=O(N^{-2})$ by Lemma \ref{SingleBesselLemma}. Finally, employing the asymptotics for $J_\mu$ and $I_{\mu,\ell}$ given by (\ref{IntegralAsymptotic}) and (\ref{eqn:5.6}) one derives that
		\[I'=-\frac{2}{\pi}\sum_{k,l=1,2}\int_{\lam}^{\infty} \cos(g_{N,n}(x)-\frac{\alpha\pi}{2}+\frac{((-1)^k-1)\pi }{4})\times\]\[\cos(g_{N,m}(x)-\frac{\alpha\pi}{2}+\frac{((-1)^l+1)\pi }{4})\frac{h_{N,k}(x)l_{N,l}(x)}{g_{N,m}'(x)}dx+O(N^{-2})=\]\[-\sum_{k,l=1,2}\frac{1}{\pi}\int_{\lam}^{\infty} [\cal{I}_\alpha(g_{N,n}(x))]_k[\cal{I}_{\alpha-1}(g_{N,m}(x))]_l\frac{h_{N,k}(x)l_{N,l}(x)}{g_{N,m}'(x)}dx+O(N^{-2}).\label{bessel-pt-1}\]
		From the error bounds on $R_N$ and $\bar{R}_N$, we see that  $h_{N,i}(x)=\phi(x)[T_{N,n,\lam}(x)]_{1i}+O(N^{-1})$ and $l_{N,i}(x)=\varphi(x)[T_{N,m,\lam}(x)]_{1i}+O(N^{-1})$, uniformly. Applying these, we can further write $I'$ as 
		\[-\frac{1}{\pi(N-m)}\sum_{k,l=1,2}\int_{\lam}^{\infty} [\cal{I}_\alpha(g_{N,n}(x))]_k[\cal{I}_{\alpha-1}(g_{N,m}(x))]_l\frac{\phi(x)
			\varphi(x)[T_{N,n,\lam}(x)]_{1k}[T_{N,m,\lam}(x)]_{1l}}{f_{N,m,\lam}'(x)}dx+O(N^{-2}).\label{bessel-pt-2}\]
		Recalling that $g_{N,k}(x)=(N-k)f_{N,k,\lam}(x)=(N-k)(s_{N,k}(\lam)-s_{N,k}(x))$, we see that by Lemma \ref{Bessel Matching Lemma}
		\[I'=-\frac{1}{\pi(N-m)}\int_{\lam}^{\infty} [T_{\infty}(x)e^{-i\pi\sum_{p=1}^{j-1}\alpha_p\sigma}\begin{bmatrix}e^{i(N-n)s_{N,n}(x)}\\ e^{-i(N-n)s_{N,n}(x)}\end{bmatrix}]_1\times\]\[[T_{\infty}(x)e^{-i\pi\sum_{p=1}^{j-1}\alpha_p\sigma}e^{-i\frac{\pi}{2}\sigma}\begin{bmatrix}e^{i(N-m)s_{N,m}(x)}\\ e^{-i(N-m)s_{N,m}(x)}\end{bmatrix}]_1\frac{\phi(x)\varphi(x)}{|s_{N,m}'(x)|}dx+O(N^{-2}).\label{bessel-pt-3}\]
		Employing Lemma \ref{sk expansion lem}, we may now expand the integral on the right-hand side of (\ref{bessel-pt-3}) as 
		\[-\frac{i}{\pi N}\sum_{k,l=1,2}\int_{\lam}^\infty e^{-(-1)^kNis(x)-(-1)^lNis(x)}\frac{f_{k,l}(x)}{|s'(x)|}dx+O(N^{-2}),\label{bessel-5}\]
		where here
		\[f_{k,l}(x)=\phi(x)e^{(-1)^k in\arcos(x)}[T_{\infty}(x)]_{1k}e^{(-1)^ki\pi\sum_{p=1}^{j-1}\alpha_p}\varphi(x)(-1)^{l}e^{(-1)^l im\arcos(x)}[T_{\infty}(x)]_{1l}e^{(-1)^l i\pi\sum_{p=1}^{j-1}\alpha_p}.\]
		We see that by integration by parts (\ref{integration-by-parts}) that the terms with $l=k$ in (\ref{bessel-5}) are $O(N^{-2})$. Thus (\ref{bessel-5}) may further be written as
		\[-\frac{i}{\pi N}\int_{\lam}^{\infty} \bigg(\frac{f_{1,2}(x)}{|s'(x)|}+\frac{f_{2,1}(x)}{|s'(x)|}\bigg)dx+O(N^{-2})=\]\[-\frac{i}{\pi N}\int_{\lam}^{\infty} \phi(x)\varphi(x)(e^{-i(n-m)\arcos(x)}-e^{i(n-m)\arcos(x)})\frac{[T_{\infty}(x)]_{11}[T_{\infty}(x)]_{12}}{|s'(x)|}dx+O(N^{-2}).\]
		The identities at the end of Lemma \ref{Bulk double Integral of pN} now show that this coincides with
		\[I'=\frac{1}{2\pi N}\int_{\lam}^{\infty} \frac{\phi(x)\varphi(x)}{(1-x^2)}\sin((m-n)\arccos(x))dx+O(N^{-2})\]
		This completes the evaluation of the first integral on the right-hand side of (\ref{m4}). The second integral is similarly given by
		\[\frac{1}{2\pi N}\int_{-\infty}^{\lam} \frac{\phi(x)\varphi(x)}{(1-x^2)}\sin((m-n)\arccos(x))dx+O(N^{-2}).\]
		Together these computations complete the proof of the second claim.
	\end{proof}
	
	To compute the integrals supported around $\{\pm 1\}$ we will need a lemma similar to Lemma \ref{Bessel Matching Lemma}. We recall that for $x>0$ we have that (Chapter 10 of \cite{AS})
	\[\Ai(-x)=\frac{1}{\sq{\pi} x^{1/4}}(\sin(\frac{2}{3}x^{3/2}+\pi/4)+O(x^{-3/2})),\;\;\Ai'(-x)=-\frac{x^{1/4}}{\sq{\pi}}(\cos(\frac{2}{3}x^{3/2}+\pi/4)+O(x^{-3/2})).\label{airy-abe}\]
	Similarly to Lemma \ref{Bessel Matching Lemma}, we have the following matching formula.
	\begin{lem}
		\label{Airy Matching}
		We have for $|x|\le 1$ that
		\[\frac{1}{\sq{\pi}}T_{1}(x)\begin{bmatrix}|f_{1}(x)|^{-1/4}\sin(a+\pi/4)\\
			-|f_{1}(x)|^{1/4}\cos
			(a+\pi/4)
		\end{bmatrix}=T_{\infty}(x)\begin{bmatrix}e^{ia}\\ e^{-ia} \end{bmatrix},\]
		and that
		\[\frac{1}{\sq{\pi}}T_{-1}(x)\begin{bmatrix}|f_{-1}(x)|^{-1/4}\sin(a+\pi/4)\\
			|f_{-1}(x)|^{1/4}\cos(a+\pi/4)
		\end{bmatrix}=T_{\infty}(x)e^{-i\pi\sum_{j=1}^{m}\alpha_j\sigma}\begin{bmatrix}e^{ia}\\ -e^{-ia} \end{bmatrix}.\]
	\end{lem}
	\begin{proof}
		We observe that
		\[e^{-i\frac{\pi}{4}}e^{i\frac{\pi}{4}\sigma}\begin{bmatrix}1&-1\\1&1\end{bmatrix}e^{i\frac{\pi}{4}\sigma}\begin{bmatrix}\sin(a+\pi/4)\\-\cos(a+\pi/4)
		\end{bmatrix}=e^{i\frac{\pi}{4}}\begin{bmatrix}1&i\\-i&-1\end{bmatrix}\begin{bmatrix}\sin(a+\pi/4)\\-\cos(a+\pi/4)
		\end{bmatrix}=\begin{bmatrix}e^{ia}\\
			e^{-ia}
		\end{bmatrix}.\]
		Applying $T_{\infty}$ and recalling the definition of $T_1$ given in (\ref{eqn:T1-def}) then achieves the first equality. The proof of the result for $T_{-1}$ follows similarly.
	\end{proof}
	
	We will also need the following computation.
	\begin{lem}
		\label{T1 lemma}
		We have that
		\[[T_1(1)]_{11}=\sq{2\pi},\;\;[T_{-1}(-1)]_{11}=\sq{2\pi}.\]
	\end{lem}
	\begin{proof}
		We note that $D(x)=\exp(\cal{W}(x))(1+O(|x-1|^{1/2}))$ (see the proof of Lemma 4.15 in \cite{GUEGMC}) and as in addition $f_1'(1)=2$, we see that for $x>1$
		\[T_1(x)=\sq{\pi}e^{-i\frac{\pi}{4}}A(x)e^{i\frac{\pi}{4} \sigma}\begin{bmatrix}1&-1\\
			1&1\end{bmatrix}2^{\sigma/4}(x-1)^{\sigma/4}+O((x-1)^{1/4}).\]
		We note that for $x>1$ we have that
		\[A(x)=\frac{1}{2^{3/4}(x-1)^{1/4}}\begin{bmatrix}1& i\\-i&1 \end{bmatrix}+\frac{(x-1)^{1/4}}{2^{5/4}}\begin{bmatrix}1& -i\\i&1 \end{bmatrix}+O((x-1)^{3/4}).\]
		
		From this, one may routinely obtain that
		\[\sq{\pi}e^{-i\frac{\pi}{4}}A(x)e^{i\frac{\pi}{4} \sigma}\begin{bmatrix}1&-1\\
			1&1\end{bmatrix}2^{\sigma/4}=\sq{2\pi}(x-1)^{-1/4}\begin{bmatrix}1& 0\\
			-i&0\end{bmatrix}+i\sq{\frac{\pi}{2}}(x-1)^{1/4}\begin{bmatrix}0& -1\\
			0&-i\end{bmatrix}+O((x-1)^{3/4}).\]
		Together these are sufficient to establish that $[T_{1}(1)]_{11}=\sq{2\pi}$. The case of $-1$ is similar.
	\end{proof}
	
	With these results established, we are now able to compute the relevant integrals in the regions around $\{\pm 1\}$. This result will follow from similar methods to Lemma \ref{both Bessel Lemma}, though we will contend with a variety of complications as the integrand switches from being oscillatory to exponentially decaying. Particularly, this transition will cause a variety of boundary terms to appear, which requires us to carefully analyze each term carefully to get the exact contribution. In addition, the non-smooth nature of many of the functions composing $T_{\pm 1}$ will require us to carefully Taylor expand many quantities to show cancellation. 
	
	We note that we will only need this level of detail for one integral of Proposition \ref{W-Integral-Proposition}, which is recalled as (\ref{ig-r2}) below, and showing simply that (\ref{airy-2-lem}) is simply $O(N^{-1})$ is significantly simpler.

	\begin{lem}
		\label{both airy lemma}
		There is $\delta>0$, such that if $\phi$ and $\varphi$ are smooth functions supported on $(\pm 1-\delta,\pm 1+\delta)$, then we have for any $n$ and $m$ that
		\[\int \phi(x)p_{N-n}(x)w_N(x)^{1/2}dx=\frac{1}{(2N)^{1/2}}(\pm 1)^{N-n}\phi(\pm 1)+O(N^{-5/6}),\label{airy-1-lem}\]
		\[\frac{1}{2}\int\int  \phi(x)\varphi(y)p_{N-n}(x)p_{N-m}(y)\sign(x-y)w_N(x)^{1/2}w_N(y)^{1/2}dxdy=\]\[\frac{1}{2\pi N}\int_{-1}^{1} \frac{\phi(x)\varphi(x)}{(1-x^2)}\sin((m-n)\arcos(x))dx+O(N^{-7/6}).\label{airy-2-lem}\]
	\end{lem}
	
	\begin{proof}
		Fix $\delta$ as in Lemma \ref{both Bessel Lemma}. We demonstrate the case of $+1$, the case of $-1$ being identical. We first recall that there is $C$ such that $|\Ai(x)|,|\Ai'(x)|\le Ce^{-x}$ (see Chapter 10 of \cite{AS}). From the asymptotics of Proposition \ref{Asym p_N Airy} we see that
		\[\int_1^{\infty}\phi(x)p_{N-n}(x)w_N(x)^{1/2}dx=\frac{1}{\pi^{1/2}}\int_{0}^{\infty}(N-n)^{1/6}h_{N,1}(x)\Ai((N-n)^{2/3}x)dx+\]
		\[\frac{1}{\pi^{1/2}}\int_{0}^{\infty}(N-n)^{-1/6}h_{N,2}(x)\Ai'((N-n)^{2/3}x)dx,\label{RHS-airy-eq}\]
		where here $h_{N,i}(x)=(f_{N,n,1}^{-1})'(x)[(I+R_N(f_{N,n,1}^{-1}(x)))T_1(f_{N,n,1}^{-1}(x))]_{1i}\phi(f_{N,n,1}^{-1}(x))$. Noting that $h_{N,2}$ is uniformly bounded on $(1,\infty)$, we see that by the exponential bound on $\Ai'$ the second integral on the right-hand side of (\ref{RHS-airy-eq}) is $O(N^{-5/6})$, so we may focus on the first integral on the right-hand side. As $h_{N,1}(x)$ is smooth and of compact support, we see that there is $C$ such that $|h_{N,1}(x)-h_{N,1}(0)|\le C|x|$. In particular,
		\[|\int_{0}^{\infty}(h_{N,1}(x)-h_{N,1}(0))\Ai((N-n)^{2/3}x)dx|\le C\int^{\infty}_{0}|x\Ai((N-n)^{2/3}x)|dx.\]
		Using that $\Ai$ is subexponential, we see there are $C>0$ such that
		\[\int^{\infty}_{0}|x\Ai((N-n)^{2/3}x)|dx\le C(N-n)^{-4/3}\int_0^{\infty}xe^{-x}dx=O(N^{-4/3}),\]
		so that
		\[\int_{0}^{\infty}h_{N,1}(x)\Ai((N-n)^{2/3}x)dx=h_{N,1}(0)\int_{0}^{\infty}\Ai((N-n)^{2/3}x)dx+O(N^{-4/3}).\]
		As $f'_1(1)=2$, we see that $h_{N,1}(0)=2^{-1}\phi(1)[T_1(1)]_{11}+O(N^{-1})$, and thus (\ref{RHS-airy-eq}) may be rewritten as
		\[\frac{1}{2\pi^{1/2}}N^{-1/2}\phi(1)[T_{1}(1)]_{11}\int_0^{\infty}\Ai(x)dx+O(N^{-5/6}).\]
		Applying Lemma \ref{T1 lemma} and the fact that $\int_0^{\infty}\Ai(x)dx=\frac{1}{3}$ (see 10.4.82 of \cite{AS}) we see that we may further reduce this to
		\[\phi(1)\frac{1}{(2N)^{1/2}}(\frac{1}{3})+O(N^{-5/6})\label{Airy-Single-Integral-Outer}.\]
		To compute the integral in the region $(1-\delta,1)$ we observe that we have that
		\[\Ai(-x)=\frac{\sq{x}}{3}(J_{1/3}(\frac{2}{3}x^{3/2})+J_{-1/3}(\frac{2}{3}x^{3/2})),\label{Airy-Bessel-1}\]\[\Ai'(-x)=\frac{x}{3}(J_{2/3}(\frac{2}{3}x^{3/2})-J_{-2/3}(\frac{2}{3}x^{3/2})),\label{Airy-Bessel-2}\]
		which occur as (10.4.15) and (10.4.17) of \cite{AS}. By applying Lemma \ref{SingleBesselLemma} to the terms in this expression, we obtain that for smooth, compactly-supported $h$, we have that
		\[\int_0^{\infty} h(t)\Ai(-N^{2/3}t)dt=\frac{h(0)}{N^{2/3}}\frac{2}{3}+O(N^{-4/3}),\;\;\;\int_0^{\infty} h(t)\Ai'(-N^{2/3}t)dt=\frac{h(0)}{3^{2/3}\Gamma(2/3)N^{2/3}}+O(N^{-4/3}),\label{airy-single-int}\]
		where the error can similarly be expressed in terms of the derivatives of $h$. Proceeding similarly to the case of $(1,1+\delta)$, and noting that
		\[\int_{-\infty}^{1}\phi(x)p_{N-n}(x)w_N(x)^{1/2}dx=\frac{\phi(1)}{(2N)^{1/2}}(\frac{2}{3})+O(N^{-5/6}). \label{Airy-Single-Integral-Inner}\]
		Together these yield (\ref{airy-1-lem}).
		
		We now proceed to derive the double integral result. We observe that in view of (\ref{Airy-Single-Integral-Inner}), (\ref{Airy-Single-Integral-Outer}) and (\ref{disjoint-support}), we have that
		\[\int_{\mp \infty}^1\int_1^{\pm\infty}\phi(x)\varphi(y)p_{N-n}(x)p_{N-m}(y)\Sign(x-y)w_N(x)^{1/2}w_N(y)^{1/2}dxdy=\pm\frac{\phi(1)\varphi(1)}{9N}+O(N^{-4/3}),\]
		so that splitting the domain of the double integral into four regions, we see that
		\[\int\int\phi(x)\varphi(y)p_{N-n}(x)p_{N-m}(y)\Sign(x-y)w_N(x)^{1/2}w_N(y)^{1/2}dxdy=\]\[\int_{1}^{\infty}\int_{1}^{\infty}\phi(x)\varphi(y)p_{N-n}(x)p_{N-m}(y)\Sign(x-y)w_N(x)^{1/2}w_N(y)^{1/2}dxdy+\]
		\[\int^{1}_{-\infty}\int^{1}_{-\infty}\phi(x)\varphi(y)p_{N-n}(x)p_{N-m}(y)\Sign(x-y)w_N(x)^{1/2}w_N(y)^{1/2}dxdy+O(N^{-4/3}).\]
		To evaluate these integrals, we will denote $h_{N,i}(x)=[(I+R_N(x))T_1(x)]_{1i}\phi(x)$ and $l_{N,i}(x)=[(I+\bar{R}_N(x))T_1(x)]_{1i}\varphi(x)$, where $R_N$ and $\bar{R}_N$ denote the error terms in Proposition \ref{Asym p_N Airy} for case of $k=n$ and $k=m$, respectively. Let us denote
		\[\cal{A}_{N,k}(x)=\begin{bmatrix}(N-k)^{1/6}\Ai(x)\\ (N-k)^{-1/6}\Ai'(x)\end{bmatrix},\]
		and $g_{N,k}(x)=N^{2/3}f_{N,k,1}(x)$. With this notation, we may write the integral over the region $(1,\infty)^2$ as
		\[\frac{1}{2\pi}\sum_{i,j=1,2}\int_{1}^{\infty}\int_{1}^{\infty}h_{N,i}(x)l_{N,j}(y)[\cal{A}_{N,n}(g_{N,n}(x))]_i[\cal{A}_{N,m}(g_{N,m}(y))]_j\Sign(x-y)dxdy.\label{airy-easy-2}\]
		We note that for $l,k\in \{1,2\}$, there is $C,c>0$ such that for $x\ge 1$, 
		\[|[\cal{A}_{N,l}(g_{N,m}(x))]_{k}|\le CN^{-(-1)^k/6}e^{-cN^{2/3}x}.\]
		Applying this and noting that $h_{N,i},l_{N,i}=O(1)$, we see that we see that $(i,j)$-th term in (\ref{airy-easy-2}) is of order $O(N^{-4/3-(-1)^{i}/6-(-1)^{j}/6})$, so that it suffices to treat the $i=j=1$ term. Arguing with Taylor's theorem as in the single integral case, we see that the integral
		\[\int_{1}^{\infty}\int_{1}^{\infty}h_{N,1}(x)(g_{N,1}(y)-\frac{g_{N,1}(1)}{h_{N,1}(1)}h_{N,1}(y))[\cal{A}_{N,n}(g_{N,n}(x))]_1[\cal{A}_{N,m}(g_{N,m}(y))]_1\sign(x-y)dxdy\]
		is of order $O(N^{-4/3})$. Noting that $|g_{N,n}(x)-g_{N,m}(x)|=O(N^{-1/3})$, we see by the above estimate on $\Ai'$, as well as Taylor's theorem again, that we see that there is $C,c>0$ such that for $x\in(1,1+\delta)$
		\[|[\cal{A}_{N,n}(g_{N,n}(x))]_1-[\cal{A}_{N,m}(g_{N,m}(x))]_1|\le CN^{-1/6}e^{-cN^{2/3}x},\]
		so that
		\[\int_{1}^{\infty}\int_{1}^{\infty}h_{N,1}(x)h_{N,1}(y)[\cal{A}_{N,n}(g_{N,n}(x))]_1([\cal{A}_{N,m}(g_{N,n}(y))]_1-[\cal{A}_{N,m}(g_{N,m}(y))]_1)\sign(x-y)dxdy\]
		is of order $O(N^{-4/3})$. Now noting that by symmetry, we have that
		\[\frac{1}{2}\int_{1}^{\infty}\int_{1}^{\infty}h_{N,1}(x)h_{N,1}(y)[\cal{A}_{N,n}(g_{N,n}(x))]_1[\cal{A}_{N,n}(g_{N,n}(y))]_1\sign(x-y)dxdy=0,\]
		we see combining these results that the $i=j=1$ term in (\ref{airy-easy-2}) is of order $O(N^{-4/3})$, so that in total we have that
		\[\int_{1}^{\infty}\int_{1}^{\infty}\phi(x)\varphi(y)p_{N-n}(x)p_{N-m}(y)\Sign(x-y)w_N(x)^{1/2}w_N(y)^{1/2}dxdy=O(N^{-4/3}).\]
		Combining these to show (\ref{airy-2-lem}), we see that it suffices to show that
		\[\frac{1}{2}\int^{1}_{-\infty}\int^{1}_{-\infty}\phi(x)\varphi(y)p_{N-n}(x)p_{N-m}(y)\Sign(x-y)w_N(x)^{1/2}w_N(y)^{1/2}dxdy=\]\[\frac{1}{2\pi N}\int_{-1}^{1} \frac{\phi(x)\varphi(x)}{(1-x^2)}\sin((m-n)\arcos(x))dx+O(N^{-7/6}).\label{abe-hat}\]
		
		As above, we write the integral over $(-\infty,1)^2$ as
		\[\frac{1}{2\pi}\sum_{i,j=1,2}\int_{-\infty}^{1}\int_{-\infty}^{1}h_{N,i}(x)l_{N,j}(y)[\cal{A}_{N,n}(g_{N,n}(x))]_i[\cal{A}_{N,m}(g_{N,m}(y))]_j\Sign(x-y)dxdy.\label{airy-4-ignore}\]
		
		Except for the case of $i=j=1$, all of these terms may be dealt with essentially as in the proof of Lemma \ref{both Bessel Lemma}. On the other hand, the $i=j=1$ term will require an amount of technical care, as did for the integral over $(1,\infty)^2$. For this reason, instead of employing Lemma \ref{integration-by-parts-lemma} directly, we will need to modify its proof by integrating by parts using $\int_{-y}^{\infty}$ rather than $\int_0^y$. This will be convenient as otherwise, each integral in Lemma \ref{integration-by-parts-lemma} would still be of leading order for the $i=j=1$ term.
		
		To begin, we note that by integration by parts, we may write
		\[\frac{1}{2}\int_{-\infty}^{1}l_{N,j}(y)[\cal{A}_{N,m}(g_{N,m}(y))]_j\Sign(x-y)dy=\frac{l_{N,j}(x)}{g_{N,m}'(x)}(\int_{-g_{N,m}(x)}^{\infty}[\cal{A}_{N,m}(-z)]_jdz)\]
		\[-\frac{1}{2}\frac{l_{N,j}(1)}{g'_{N,m}(1)}(\int_{-g_{N,m}(1)}^{\infty}[\cal{A}_{N,m}(-z)]_jdz)-\frac{1}{2}\int_{-\infty}^1 (\int_{-g_{N,m}(y)}^{\infty}[\cal{A}_{N,m}(-z)]_jdz)\frac{d}{dy}\left(\frac{l_{N,j}(y)}{g'_{N,m}(y)}\right)\sign(x-y)dy.\]
		Employing this, we see that we may write the $(i,j)$-th term of (\ref{airy-4-ignore}) as the sum $I_{ij}+II_{ij}+III_{ij}'$ where
		\[I_{ij}=\frac{1}{\pi}\int_{-\infty}^1 [\cal{A}_{N,n}(g_{N,n}(x))]_i(\int^\infty_{-g_{N,m}(x)}[\cal{A}_{N,m}(-z)]_jdz)\frac{h_{N,i}(x)l_{N,j}(x)}{g_{N,m}'(x)}dx,\label{john}\]
		\[II_{ij}=-\frac{1}{2\pi}(\int^1_{-\infty}[\cal{A}_{N,n}(g_{N,n}(x))]_ih_{N,i}(x)dx)\frac{l_{N,j}(1)}{g_{N,m}'(1)}\int_{-g_{N,m}(1)}^{\infty}[\cal{A}_{N,m}(-z)]_jdz,\]
		\[III'_{ij}=-\frac{1}{2\pi}\int_{-\infty}^1\int_{-\infty}^1[\cal{A}_{N,n}(g_{N,n}(x))]_i(\int^\infty_{-g_{N,m}(y)}[\cal{A}_{N,m}(-z)]_jdz)h_{N,i}(x)\frac{d}{dy}(\frac{l_{N,j}(y)}{g_{N,m}'(y)})\sign(x-y)dxdy.\]
		By a similar integration by parts we may write $III'_{ij}$ itself as a sum $III_{ij}+IV_{ij}+V_{ij}$ with
		\[III_{ij}=\frac{1}{\pi}\int_{-\infty}^1(\int^\infty_{-g_{N,n}(x)}[\cal{A}_{N,n}(-z)]_idz)(\int^\infty_{-g_{N,m}(x)}[\cal{A}_{N,m}(-z)]_jdz)\frac{h_{N,i}(x)}{g_{N,n}'(x)}\frac{d}{dx}(\frac{l_{N,j}(x)}{g_{N,m}'(x)})dx,\]
		\[IV_{ij}=-\frac{1}{2\pi}\frac{h_{N,i}(1)}{g_{N,n}'(1)}\int_{-g_{N,n}(1)}^{\infty}[\cal{A}_{N,n}(-z)]_idz\int_{-\infty}^1 \frac{d}{dy}(\frac{l_{N,j}(y)}{g'_{N,m}(y)})(\int_{-g_{N,m}(y)}^{\infty}[\cal{A}_{N,m}(-z)]_jdz)dy.\]
		\[V_{ij}=\frac{1}{2\pi}\int_{-\infty}^1\int_{-\infty}^1(\int^\infty_{-g_{N,n}(x)}[\cal{A}_{N,n}(-z)]_idz)(\int^\infty_{-g_{N,m}(y)}[\cal{A}_{N,m}(-z)]_jdz)\times \]\[\frac{d}{dx}(\frac{h_{N,i}(x)}{g_{N,n}'(x)})\frac{d}{dy}(\frac{l_{N,j}(y)}{g_{N,m}'(y)})\sign(x-y)dxdy.\]
		We now collect the following asymptotics for $x\in (1,\infty)$ (see \cite{AS})
		\[\sq{\pi}\Ai(-x)=\frac{1}{x^{1/4}}\sin(\frac{2}{3}x^{3/2}+\pi/4)+O(x^{-7/4}),\;\;\;\;\sq{\pi}\Ai'(-x)=-x^{1/4}\cos(\frac{2}{3}x^{3/2}+\pi/4)+O(x^{-5/4}),\]
		\[\sq{\pi}\int^{\infty}_x\Ai(-y)dy=-\frac{1}{x^{3/4}}\sin(\frac{2}{3}x^{3/2}-\frac{\pi}{4})+O(x^{-9/4}),\]\[\sq{\pi}\int_x^{\infty} \Ai'(-y)dy=-\frac{1}{x^{1/4}}\cos(\frac{2}{3}x^{3/2}-\frac{\pi}{4})+O(x^{-7/4}).\label{airy-a}\]
		For now we only take from these that these functions are bounded, so that for fixed $k\in \Z$ and $x\in [1-\delta,1]$ \[[\cal{A}_{N,k}(x)]_l=O(N^{-(-1)^l/6}),\;\;\;\;\int_{-g_{N,k}(x)}^{\infty}[\cal{A}_{N,k}(-z)]_ldz=O(N^{-(-1)^l/6}).\label{ignore-v1}\]
		Employing these bounds, and recalling that $g_{N,k}(x)=O(N^{2/3})$, we see that for any $i,j=1,2$ the integrals $III_{ij},$ $IV_{ij}$ and $V_{ij}$ have order $O(N^{-4/3-(-1)^{i}/6-(-1)^j/6})$. Further employing (\ref{airy-single-int}) we see that
		\[\int_{-\infty}^1 [\cal{A}_{N,n}(g_{N,n}(x))]_ih_{N,i}(x)dx=O(N^{-(-1)^i/6-2/3}).\]
		Thus we see as well that $II_{ij}$ has order $O(N^{-4/3-(-1)^{i}/6-(-1)^j/6})$. In particular, if $(i,j)\neq (1,1)$ all integrals except for $I_{ij}$ are $O(N^{-4/3})$. 
		
		If $i=j=1$ though, none of these bounds are sufficient. Instead we will need to note that for $k\in \Z$ and $x\in (1-\delta,1]$
		\[[\cal{A}_{N,k}(x)]_1=O((1-x)^{-1/4}),\;\;\;\; \int_{-g_{N,k}(x)}^{\infty}[\cal{A}_{N,k}(-z)]_1dz=O(N^{-1/3}(1-x)^{-3/4}).\label{ignore-v2}\]
		Employing (\ref{ignore-v1}) for $x\in (1-N^{-2/3},1]$ and (\ref{ignore-v2}) elsewhere we see that there is $C$ such that
		\[|III_{11}|\le \frac{C}{N^{4/3}}\left(\frac{1}{N^{2/3}}\int_{N^{-2/3}}^\infty \frac{dx}{x^{6/4}}+N^{1/3}\int_0^{N^{-2/3}}dx\right)=\frac{3C}{N^{5/3}}.\]
		Similarly we see that $IV_{11},V_{11}$ are $O(N^{-4/3})$.
		
		Lastly, by applying (\ref{airy-single-int}) to $II_{11}$, we see that
		\[II_{11}=-\frac{N^{1/3}}{2\pi}\frac{h_{N,1}(1)}{g_{N,n}'(1)}\frac{l_{N,1}(1)}{g_{N,m}'(1)}\left(\frac{2}{3}\right)^2+O(N^{-4/3})=-\frac{\phi(1)\varphi(1)}{18\pi N}[T_{1}(1)]_{11}^2+O(N^{-4/3}).\]
		
		Now we are left to consider $I_{ij}$. As before, the case of $i=j=1$ will present an additional subtlety. Let us denote
		\[\cal{S}_{N}(x)=\begin{bmatrix}|f_{N,n}(x)|^{-1/4}\sin((N-n)s_{N,n}(x)+\pi/4)\\-|f_{N,n}(x)|^{1/4}\cos((N-n)s_{N,n}(x)+\pi/4)\end{bmatrix},\]
		\[\cal{C}_{N}(x)=\begin{bmatrix}-|f_{N,m}(x)|^{-1/4}\sin((N-m)s_{N,m}(x)-\pi/4)\\-|f_{N,m}(x)|^{1/4}\cos((N-m)s_{N,m}(x)-\pi/4)\end{bmatrix}.\]
		We observe that by applying the asymptotics of (\ref{airy-a}), for $1-\delta\le x\le 1-N^{-2/3}$ and $i=1,2$
		\[[\cal{A}_{N,n}(g_{N,n}(x))]_i=[\cal{A}_{N,n}((N-n)^{2/3}f_{N,n,1}(x))]_i=\pi^{-1/2}[\cal{S}_N(x)]_i+O(N^{-1}(1-x)^{-7/4}),\] \[\int_{-g_{N,m}(x)}^{\infty}[\cal{A}_{N,m}(-z)]_idz=(N-m)^{1/2}|f_{N,m,1}(x)|^{1/2}\int_{-(N-m)^{2/3}f_{N,m,1}(x)}^{\infty}[\cal{A}_{N,m}(-z)]_idz=\]\[\pi^{-1/2}(N-m)^{-1/3}|f_{N,m,1}(x)|^{-1/2}[\cal{C}_N(x)]_i+O(N^{-4/3}(1-x)^{-9/4}).\label{ignore-v3}\]
		The subtlety that arises is that direct substitution of these asymptotic into $I_{11}$, even neglecting the error terms, leads to a quantity that is not integrable. To deal with this, we will need to treat the integral of $I_{ij}$ over the regions $[1-N^{-1/3},1]$ and $[1-\delta,1-N^{-1/3}]$ separately. In the latter integral, we will be able to apply (\ref{ignore-v3}), while the integral over the prior will be negligible unless $i=j=1$, where it will surprisingly cancel the contribution from $II_{11}$ up to lower order terms.
		
		To begin, we note that by (\ref{ignore-v3}) for $i,j=1,2$ and $1-\delta\le x\le 1-N^{-2/3}$
		\[[\cal{A}_{N,n}(g_{N,n}(x))]_i\int^\infty_{-g_{N,m}(x)}[\cal{A}_{N,m}(-z)]_jdz-\pi^{-1}N^{-1/3}|f_{N,m,1}(x)|^{-1/2}[\cal{S}_N(x)]_i[\cal{C}_N(x)]_j=\]\[O(N^{-7/3}(1-x)^{-4}).\label{ignore-j1}\]
		We
		now note that
		\[N^{-7/3}\int_{N^{-1/3}}^{\infty}x^{-4}dx=O(N^{-4/3}).\]
		Employing this and (\ref{ignore-j1}) we see that
		\[\frac{1}{\pi}\int_{-\infty}^{1-N^{-1/3}} [\cal{A}_{N,n}(g_{N,n}(x))]_i(\int^\infty_{-g_{N,m}(x)}[\cal{A}_{N,m}(-z)]_jdz)\frac{h_{N,i}(x)l_{N,j}(x)}{g_{N,m}'(x)}dx=\]\[\frac{1}{\pi^2(N-m)}\int_{-\infty}^{1-N^{-1/3}} [\cal{S}_N(x)]_i[\cal{C}_N(x)]_j \frac{h_{N,i}(x)l_{N,j}(x)}{f_{N,m,1}(x)^{1/2}f_{N,m,1}'(x)}dx+O(N^{-4/3}). \label{kyle-john}\]
		Now if $(i,j)=(1,2)$ or $(i,j)=(2,1)$ then for $1-\delta\le x\le 1-N^{-2/3}$
		\[[\cal{A}_{N,n}(g_{N,n}(x))]_i\int^\infty_{-g_{N,m}(x)}[\cal{A}_{N,m}(-z)]_jdz=O(N^{-1/3}x^{-1/2}),\]
		and $O(1)$ for $1-N^{-2/3}\le x\le 1$. Combining both of these, we see that
		\[\frac{1}{\pi}\int_{1-N^{-1/3}}^{1} [\cal{A}_{N,n}(g_{N,n}(x))]_i(\int^\infty_{-g_{N,m}(x)}[\cal{A}_{N,m}(-z)]_jdz)\frac{h_{N,i}(x)l_{N,j}(x)}{g_{N,m}'(x)}dx=O(N^{-4/3}).\label{ignore-r1}\] 
		The case of $(i,j)=(2,2)$ follows more simply by just employing (\ref{ignore-v1}) to obtain (\ref{ignore-r1}). Now we will focus on the case of $(i,j)=(1,1)$. We note that by Taylor's Theorem $f_{N,n,1}(x)-f_{N,m,1}(x)=O(N^{-1})$ uniformly for $x\in (1-\delta,1+\delta)$. Moreover we note that by the above asymptotic for $\Ai'$ there is $C$ such that for $x\in (-\infty,0)$ we have that $|\Ai'(x)|\le C(1+|x|^{1/4})$. Thus by again applying Taylor's Theorem we see that for $1-N^{-1/3}\le x\le 1$, \[|[\cal{A}_{N,n}(g_{N,n}(x))]_1-[\cal{A}_{N,m}(g_{N,m}(x))]_1|=O(N^{-1/6}+(1-x)^{1/4}).\] From this, we see that there is $C$ such that
		\[|\int_{1-N^{-1/3}}^{1}([\cal{A}_{N,n}(g_{N,n}(x))]_1-[\cal{A}_{N,m}(g_{N,m}(x))]_1)(\int^\infty_{-g_{N,m}(x)}[\cal{A}_{N,m}(-z)]_1dz)\frac{h_{N,1}(x)l_{N,1}(x)}{g_{N,m}'(x)}dx|\le\]
		\[CN^{-1}\int_{1-N^{-1/3}}^1\frac{N^{-1/6}+(1-x)^{1`/4}}{(1-x)^{3/4}}dx=O(N^{-7/6}).\]
		In addition, we note that by integration by parts we may write
		\[\frac{1}{\pi}\int_{1-N^{-1/3}}^{1}[\cal{A}_{N,m}(g_{N,m}(x))]_1(\int^\infty_{-g_{N,m}(x)}[\cal{A}_{N,m}(-z)]_1dz)\frac{h_{N,1}(x)l_{N,1}(x)}{g_{N,m}'(x)}dx=I'+II',\]
		\[I'=\frac{1}{2\pi}\bigg[\frac{h_{N,1}(x)l_{N,1}(x)}{g_{N,m}'(x)g_{N,m}'(x)}\bigg(\int^\infty_{-g_{N,m}(x)}[\cal{A}_{N,m}(-z)]_1dz\bigg)^2\bigg]_{1-N^{-1/3}}^{1},\]
		\[II'=-\frac{1}{2\pi}\int_{1-N^{-1/3}}^1\frac{d}{dx}(\frac{h_{N,1}(x)l_{N,1}(x)}{g_{N,m}'(x)g_{N,m}'(x)})\bigg(\int^\infty_{-g_{N,m}(x)}[\cal{A}_{N,m}(-z)]_1dz\bigg)^2.\]
		Using (\ref{ignore-v1}) we see that $II'=O(N^{-4/3})$ and using (\ref{ignore-v2}) that
		\[I'=\frac{\phi(1)\varphi(1)}{18\pi N}[T_{1}(1)]_{11}^2+O(N^{-4/3}).\]
		We observe that this is the negation of asymptotic for $II_{11}$ obtained above. In particular, combining all of these results, we see that the left-hand side of (\ref{abe-hat}) may be rewritten as
		\[\frac{1}{\pi^2(N-m)}\sum_{i,j=1,2}\int_{-\infty}^{1-N^{-1/3}} [\cal{S}_N(x)]_i[\cal{C}_N(x)]_j \frac{h_{N,i}(x)l_{N,j}(x)}{f_{N,m,1}(x)^{1/2}f_{N,m,1}'(x)}dx+O(N^{-7/6}).\label{jack}\]
		Combining the relation $s'_{N,m}(x)=f_{N,m,1}(x)^{1/2}f_{N,m,1}'(x)$, Lemma \ref{Airy Matching}, and the bounds on $R_N,\bar{R}_N$, as in the proof of Lemma \ref{both Bessel Lemma}, we see that the integral of (\ref{jack}) may further be rewritten as
		\[\frac{1}{\pi(N-m)}\int_{-\infty}^{1-N^{-1/3}}[T_{\infty}(x)\begin{bmatrix}e^{i(N-n)s_{N,n}(x)}\\ e^{-i(N-n)s_{N,n}(x)}\end{bmatrix}]_1[T_{\infty}(x)e^{-i\frac{\pi}{2}\sigma}\begin{bmatrix}e^{i(N-m)s_{N,m}(x)}\\ e^{-i(N-m)s_{N,m}(x)}\end{bmatrix}]_1\frac{\phi(x)\varphi(x)}{s_{N,m}'(x)}dx+O(N^{-7/6})=\]
		\[\frac{-i}{\pi(N-m)}\sum_{k,l=1,2}\int_{-\infty}^{1-N^{-1/3}}[T_{\infty}(x)]_{1k}[T_{\infty}(x)]_{1l}e^{-(-1)^k i(N-n)s_{N,n}(x)-(-1)^l i(N-m)s_{N,m}(x)}(-1)^l\frac{\phi(x)\varphi(x)}{s_{N,m}'(x)}dx\label{last-airy}\]
		We will show that the terms in this equation when $k=l$ are negligible. For this, we observe that by integration by parts, we may write the $(k,l)=(1,1)$ term in (\ref{last-airy}) as the sum $I''+II''$ where
		\[I''=\bigg[\frac{-i}{\pi(N-m)}\bigg(\frac{[T_{\infty}(x)]_{11}^2\phi(x)\varphi(x)}{s'_{N,m}(x)((N-n)s_{N,n}'(x)+(N-m)s_{N,m}'(x))}\bigg)e^{i((N-n)s_{N,n}(x)+(N-m)s_{N,m}(x))}\bigg]_{-\infty}^{1-N^{-1/3}}\]
		\[II''=\frac{i}{\pi(N-m)}\int_{-\infty}^{1-N^{-1/3}}\frac{d}{dx}\bigg(\frac{[T_{\infty}(x)]_{11}^2\phi(x)\varphi(x)}{s'_{N,m}(x)((N-n)s_{N,n}'(x)+(N-m)s_{N,m}'(x))}\bigg)e^{i((N-n)s_{N,n}(x)+(N-m)s_{N,m}(x))}dx.\]
		Observing that $[T_{\infty}(x)]_{11}=O((1-x)^{-1/4})$ and that $s'(x)=O((1-x)^{1/2})$, we see that there is $C$ such that
		\[|I'|\le N^{-2}C|[x^{-3/2}]_{N^{-1/3}}^{\infty}|=CN^{-3/2},\;\;|II''|\le \frac{C}{N^2}\int_{N^{-1/3}}^{\infty}\frac{dx}{x^{5/2}}\le CN^{-3/2}.\]
		Together these show that the $(k,l)=(1,1)$ term in (\ref{last-airy}) is $O(N^{-3/2})$. Showing that the $(k,l)=(2,2)$ is $O(N^{-3/2})$ is similar. Now lastly, we will need a sharp form of Lemma \ref{sk expansion lem}. For this we note that for $|x|<1$, $s'(x)=-\sq{1-x^2}$, and so for $x\in (1-\delta,1-N^{-1/3})$ we have that $s''(x)=O(N^{1/6})$. From this, we see that for fixed $k$ and $x\in (1-\delta,1-N^{-1/3})$, we have that
		\[(N-k)s_{N,k}(x)=Ns(x)-k\arcos(x)+O(N^{-5/6}).\]
		Employing this and proceeding as in the proof of Lemma \ref{both Bessel Lemma}, we may show now that (\ref{last-airy}) coincides with 
		\[\frac{1}{2\pi N}\int_{-1}^{1-N^{-1/3}} \frac{\phi(x)\varphi(x)}{(1-x^2)}\sin((m-n)\arcos(x))dx+O(N^{-7/6})=\]\[\frac{1}{2\pi N}\int_{-1}^{1} \frac{\phi(x)\varphi(x)}{(1-x^2)}\sin((m-n)\arcos(x))dx+O(N^{-7/6}).\]
		Altogether, these statements complete the proof of (\ref{abe-hat}).
	\end{proof}
	
	Taking a partition of unity with respect to all of the above regions, we may conclude the following proposition.
	\begin{prop}
		\label{Analytic Integral Double Integral}
		Let $\phi$ and $\varphi$ be smooth functions of subexponential growth. Then we have that for any $n$ and $m$ that
		\[\frac{1}{2}\int\int  \phi(x)\varphi(y)p_{N-n}(x)p_{N-m}(y)\sign(x-y)w_N(x)^{1/2}w_N(y)^{1/2}dx=\]\[\frac{1}{2\pi N}[\int_{-1}^{1} \frac{\phi(x)\varphi(x)}{(1-x^2)}\sin((m-n)\arcos(x))dx]+\]\[\frac{1}{4N}[(-1)^{N-m}\phi(1)\varphi(-1)-(-1)^{N-n}\phi(-1)\varphi(1)]+O(N^{-7/6}).\]
	\end{prop}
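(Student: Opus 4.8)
\medskip
\noindent\emph{Proof plan.} The plan is to start from the identity
\[\int J^{-1}(\varphi p_{N-m})(x)\,\phi(x)p_{N-n}(x)\,w(x)\,dx=\tfrac12\int\pv\!\int\sign(x-y)\,\varphi(y)p_{N-m}(y)\,\phi(x)p_{N-n}(x)\,w(x)^{1/2}w(y)^{1/2}\,dy\,dx\]
recorded at the beginning of this section, and to evaluate the right-hand side by localizing $\phi$ and $\varphi$ near the points $\lam_0=-1,\lam_1,\dots,\lam_m,\lam_{m+1}=1$. Convergence is not an issue: the factor $w(x)^{1/2}$ dominates the subexponential growth of $\phi$ and $\varphi$, and by Proposition \ref{Asym p_N easy} the region $\{|x|>1+\delta\}\cup\{|y|>1+\delta\}$ contributes $O(e^{-cN})$, so one may work over the compact square $[-1-\delta,1+\delta]^2$.

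First I would fix neighborhoods $I_0,\dots,I_{m+1}$ of $\lam_0,\dots,\lam_{m+1}$ with pairwise disjoint closures, each contained in the region of validity of the Airy asymptotics of Proposition \ref{Asym p_N Airy} if $j\in\{0,m+1\}$, or of the Bessel asymptotics of Proposition \ref{Asym p_N Bessel} if $1\le j\le m$, together with a subordinate finite partition of unity. This lets me write $\phi=\phi_{\mathrm b}+\sum_{j=0}^{m+1}\phi_j$ and $\varphi=\varphi_{\mathrm b}+\sum_{j=0}^{m+1}\varphi_j$, where $\phi_j,\varphi_j$ are supported in $I_j$ and agree with $\phi,\varphi$ near $\lam_j$, while $\phi_{\mathrm b},\varphi_{\mathrm b}$ vanish in a neighborhood of every $\lam_j$. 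Since both sides of the asserted formula are bilinear in $(\phi,\varphi)$, it suffices to compute $\int J^{-1}(\varphi_s p_{N-m})\,\phi_r p_{N-n}\,w\,dx$ for each of the boundedly many pairs of pieces and to add up.

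These pairs fall into three types. For \emph{diagonal} pairs, where $\phi_r$ and $\varphi_s$ lie in the same region, I would invoke Lemma \ref{both bessel lemma} (both in some $I_j$, $1\le j\le m$), Lemma \ref{both airy lemma} (both in $I_0$ or $I_{m+1}$), or Lemma \ref{Bulk double Integral of pN} (both bulk): in each case the contribution is $\frac{e}{N\pi}\int\frac{\phi_r(x)\varphi_s(x)}{1-x^2}\sin((n-m)\arccos x)\,dx$ up to $O(N^{-4/3})$ (in fact $O(N^{-2})$ outside the Airy case), and summing over $r,s$ reassembles, by bilinearity, $\frac{e}{N\pi}\int_{-1}^1\frac{\phi\varphi}{1-x^2}\sin((n-m)\arccos x)\,dx$. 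For \emph{off-diagonal} pairs with $\phi_r$ supported in $I_i$ and $\varphi_s$ in $I_j$, $i\ne j$, the disjointness of $I_i$ and $I_j$ lets me apply the disjoint-support observation from the start of the section, so the term factors as $\pm\tfrac12\big(\int\phi_r p_{N-n}w^{1/2}\big)\big(\int\varphi_s p_{N-m}w^{1/2}\big)$ with sign $\sign(\lam_i-\lam_j)$; by the single-integral parts of Lemmas \ref{both bessel lemma} and \ref{both airy lemma} each factor is $O(N^{-1})$ when its region is an interior $\lam$ and is $\frac{e^{1/2}}{\sqrt N}(\pm1)^{N-k}(\text{value at }\pm1)+O(N^{-5/6})$ when it is $\pm1$, so the only survivors at order $N^{-1}$ are the two pairs with $\{i,j\}=\{0,m+1\}$, and a short computation shows they combine to exactly $\frac{e}{2N}\big[(-1)^{N-m}\phi(1)\varphi(-1)-(-1)^{N-n}\phi(-1)\varphi(1)\big]$, everything else being $O(N^{-4/3})$. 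Finally, for \emph{mixed} pairs with one bulk piece and one piece near some $\lam_j$, I would split the bulk piece into a part supported in a slightly enlarged $I_j$ (still in the valid region, hence handled as a diagonal term) plus a part with support disjoint from $I_j$ (hence an off-diagonal term one of whose factors is a bulk single integral, which is $O(N^{-c})$ for every $c$ by Lemma \ref{Bulk Single Integral of pN} and so negligible).

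Collecting the three types of contributions gives the claimed identity, with total error $O(N^{-4/3})$ since the number of pieces is bounded in terms of $m$ and each produces an error of that size; the error is locally uniform in $(\cal{W},\{\lam_i\}_i,\{\alpha_i\}_i)$ because the cited lemmas are and the partition of unity may be fixed on compact subsets of the parameter space. I expect the only real work to be bookkeeping: keeping the orientation signs straight in the off-diagonal case, so that the two $\{-1,+1\}$ terms combine with the right relative sign, and making the bulk-to-$\lam_j$ transition clean in the mixed case. Both are of the same nature as the partition-of-unity arguments already used in Lemmas \ref{Bulk Single Integral of pN}--\ref{both airy lemma}, so neither should present a serious difficulty.
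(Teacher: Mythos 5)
Your proposal is correct and takes the same approach as the paper: the paper's entire stated justification is ``Taking a partition of unity with respect to all of the above regions, we may conclude the following proposition,'' and your proof fills in exactly that bookkeeping — localizing near $\lam_0=-1,\lam_1,\dots,\lam_m,\lam_{m+1}=1$ and the bulk, using the disjoint-support factorization for off-diagonal pairs, and invoking Lemmas \ref{Bulk Single Integral of pN}, \ref{Bulk double Integral of pN}, \ref{both bessel lemma}, and \ref{both airy lemma} on each piece. Your sign analysis correctly isolates the two edge-to-edge cross terms as the only off-diagonal survivors at order $N^{-1}$, yielding the stated $\tfrac{e}{2N}$ boundary contribution with the right relative sign.
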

	We see that as \[\int_{-1}^1 \frac{\sin(\arcos(x))}{(1-x^2)}dx=\pi,\] Proposition \ref{Analytic Integral Double Integral} shows that when $N$ is even that
	\[\int  J_N^{-1}(p_{N})(x)p_{N-1}(x)w_N(x)dx=O(N^{-7/6}).\label{ig-r2}\]
	For $n$ and $m$ arbitrary, we additionally see that
	\[\int  J_N^{-1}(p_{N-n})(x)p_{N-m}(x)w_N(x)dx=O(N^{-1}).\]
	Together these demonstrate two of the integrals required by Proposition \ref{W-Integral-Proposition}. Before proceeding, we also observe the following corollary of Lemma \ref{Bulk Single Integral of pN}, and the single integral cases of Lemmas \ref{both Bessel Lemma} and \ref{both airy lemma}.
	\begin{corr}
		\label{corr Single Integral of pN}
		Assume that $\phi$ is a smooth function on $\R$ of sub-exponential growth. Then for fixed $k$, we have that
		\[\int \phi(x)p_{N-k}(x)w_N(x)^{1/2}dx=O(N^{-1/2}).\]
	\end{corr}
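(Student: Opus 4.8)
The plan is to derive this from the single-integral bounds already established locally in this section, by the same partition-of-unity argument used for Proposition \ref{Analytic Integral Double Integral}. First I would dispose of the region $|x|>1+\delta$: by \eqref{Outside Asymptotic} of Proposition \ref{Asym p_N easy} one has $p_{N-k}(x)w(x)^{1/2}=R(x)O(e^{-Ns(x)/2})$ there, and since $s(x)\ge Cx^2$ for $|x|>1+\delta$ while $\phi$ is of sub-exponential growth, this part of the integral is $O(e^{-NC/4})$ and hence negligible. It remains to treat the integral over the compact set $\{|x|\le 1+\delta\}$, on which $\phi$ is bounded.

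Next, I would fix $\delta>0$ small enough that the conclusions of Lemmas \ref{both bessel lemma} and \ref{both airy lemma} hold simultaneously near each of the finitely many points $\lam_1,\dots,\lam_m$ and $\pm1$ (take the minimum of the finitely many admissible $\delta$'s these lemmas provide), and small enough that the intervals $(\lam_i-\delta,\lam_i+\delta)$ and $(\pm1-\delta,\pm1+\delta)$ are pairwise disjoint. Take a smooth partition of unity $1=\sum_j\chi_j$ on a neighborhood of $[-1-\delta,1+\delta]$ subordinate to the open cover consisting of these $m+2$ intervals together with one further open set whose closure avoids all of $\{\pm1,\lam_1,\dots,\lam_m\}$, and write $\phi=\sum_j\phi\chi_j$.

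Finally I would bound each piece with the appropriate local statement: for the term supported away from all special points, Lemma \ref{Bulk Single Integral of pN} gives $O(N^{-c})$ for every $c>0$; for each term supported near a $\lam_i$, Lemma \ref{both bessel lemma} gives $O(N^{-1})$; and for the two terms supported near $\pm1$, Lemma \ref{both airy lemma} gives $\tfrac{e^{1/2}}{N^{1/2}}\big((\pm1)^{N-k}(\phi\chi_j)(\pm1)\big)+O(N^{-5/6})$. Summing the finitely many contributions, the dominant term is the $O(N^{-1/2})$ coming from the Airy regions at $\pm1$, which is exactly the claimed bound. The only point that needs care — rather than a genuine obstacle — is the bookkeeping of uniformity: one must choose a single $\delta$ valid for all the local lemmas at once and observe that, since only finitely many of them are combined and each already carries local uniformity in $(\cal{W},\{\lam_i\}_i,\{\alpha_i\}_i)$, so does the resulting estimate.
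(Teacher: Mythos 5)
Your proof is correct and follows the same route the paper implicitly takes: combine the local single-integral estimates of Lemmas \ref{Bulk Single Integral of pN}, \ref{both bessel lemma}, and \ref{both airy lemma} via a partition of unity, after discarding the region $|x|>1+\delta$ using Proposition \ref{Asym p_N easy}, with the $O(N^{-1/2})$ coming from the Airy contributions at $\pm1$. This is precisely the partition-of-unity argument the paper invokes immediately before stating the corollary.
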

	To complete the proof of Proposition \ref{W-Integral-Proposition}, we must now focus our attention on the integrals involving $\ell_i$ and $q_i$. We will begin with some single integral computations.
	\begin{lem}
		\label{single-integral-lemma-l-q}
		Assume that $\phi$ is a smooth function of subexponential growth. Then for any $1\le i\le m$, we have that
		\[\int \phi(x)\ell_i(x)w_N(x)^{1/2}dx=O(N^{-1/2}),\;\;\;\int \phi(x)q_i(x)w_N(x)^{1/2}dx=O(1).\]
	\end{lem}
	\begin{proof}
		We adopt the notation $\lam=\lam_i$ and $\alpha=\alpha_i$ as before. In view of Proposition \ref{l and q asym} and (\ref{l and q relation}) we see that
		\[\int \phi(x)\begin{bmatrix}-\ell_i(x)\\2\pi iq_i(x)\end{bmatrix}w_N(x)^{1/2}dx=\]\[c_{i}^{-\sigma}e^{-i\frac{\pi}{4}}e^{-\frac{\pi i}{4}\sigma}\sigma_1 T_{N,\lam}(\lam)^{-1}(I+O(N^{-1}))e^{-N\frac{\ell}{2}\sigma}[\int \frac{\phi(x)}{x-\lam}\begin{bmatrix}\kappa_{N}^{-1}p_{N}(x)\\-2\pi i\kappa_{N-1}p_{N-1}(x)\end{bmatrix}w_N(x)^{1/2}dx].\]
		From Remark \ref{Remark-coefficents-new} we see that the right-hand side may further be written as
		\[c_{i}^{-\sigma}e^{-i\frac{\pi}{4}}e^{-\frac{\pi i}{4}\sigma}\sigma_1T_{N,\lam}(\lam)^{-1}(I+O(N^{-1}))[ \int \frac{\phi(x)}{x-\lam}\begin{bmatrix}\pi^{1/2}p_{N}(x)\\-i\pi^{1/2}p_{N-1}(x)\end{bmatrix}w_N(x)^{1/2}dx].\]
		Thus we see that it suffices to show that
		\[\int \frac{\phi(x)}{x-\lam}\begin{bmatrix}\pi^{1/2}p_{N}(x)\\-i\pi^{1/2}p_{N-1}(x)\end{bmatrix}w_N(x)^{1/2}dx=T_{N,\lam}(\lam)\begin{bmatrix}O(1)\\0\end{bmatrix}+O(N^{-1/2}).\label{dore}\]
		Let us pick $\delta_0>0$ such that asymptotics of Proposition \ref{Asym p_N Bessel} hold. By applying Corollary \ref{corr Single Integral of pN} we see that it suffices to assume that $\phi$ is supported on $(\lam-\delta_0,\lam+\delta_0)$. With the asymptotics of Proposition \ref{Asym p_N Bessel}, and a change of coordinates by $f_{\lam}$, we may write
		\[\int \frac{\phi(x)}{x-\lam}\begin{bmatrix}\pi^{1/2}p_{N}(x)\\-i\pi^{1/2}p_{N-1}(x)\end{bmatrix}w_N(x)^{1/2}dx=\]
		\[\int \frac{G_{N,1}(x)}{x}\sign(x)\sq{\pi |Nx|}J_{\alpha+1/2}(N|x|)dx+\int \frac{G_{N,2}(x)}{x}\sq{\pi |Nx|}J_{\alpha-1/2}(N|x|)dx=I+II,\label{irrev-no}\]
		where here $G_{N,i}=(f_{\lam}^{-1})'(x)\phi(f_{\lam}^{-1}(x))[(I+R_N(f_{\lam}^{-1}(x)))T_{N,\lam}(f_{\lam}^{-1}(x))]e_i$. We observe that $G_{N,i}^{(c)}(x)=O(1)$ uniformly for all $c\in \N$. From this, we see by Lemma \ref{SingleBesselLemma} that
		\[I=2\sq{\pi}G_{N,1}(0)D(\alpha+1/2,-1)+O(N^{-1})=\sq{\pi}(f_{\lam}^{-1})'(0)\phi(\lam)[T_{N,\lam}(\lam)]e_1D(\alpha+1/2,-1/2)+O(N^{-1}).\]
		The case of $II$ is similar except that the first-order contribution of the integrals on $(0,\infty)$ and $(-\infty,0)$ cancel so that $II=O(N^{-1})$. Together these computations show that
		\[\int \frac{\phi(x)}{x-\lam}\begin{bmatrix}\pi^{1/2}p_{N}(x)\\-i\pi^{1/2}p_{N-1}(x)\end{bmatrix}w_N(x)^{1/2}dx=T_{N,\lam}(\lam)\begin{bmatrix}\phi(\lam)\sq{\pi}2D(\alpha+1/2,-1/2)\\0 \end{bmatrix}+O(N^{-1}),\]
		which is more than sufficient
		to verify (\ref{dore}).
	\end{proof}
	
	We recall from Remark \ref{l and q remark} that for fixed $1\le i\le m$ there are constants $C_{1,N}, C_{2,N}$ with $C_{k,N}=O(1)$ and such that  
	\[\ell_i(x)=C_{1,N}\frac{p_{N}(x)}{x-\lam_i}+C_{2,N}\frac{p_{N-1}(x)}{x-\lam_i}, \label{first-order-form}\]
	and similarly for $q_i$. Let us take $\delta$ as above. We fix for, each $1\le l\le m$, a choice of smooth function $\phi_l$ supported on $(\lam_l-\delta/2,\lam_l+\delta/2)$, such that $\phi_l(x)=1$ for $x\in (\lam_l-\delta/4,\lam_l+\delta/4)$. In addition, we choose $\varphi_l$ supported on $(\lam_l-\delta,\lam_l+\delta)$, such that $\varphi_{l}(x)=1$ for $x\in (\lam_l-\delta/2,\lam_l+\delta/2)$.
	
	With these selected, we may write
	\[\int J_N^{-1}\ell_i(x) p_{N-n}(x)w_N(x)dx=\int (J_N^{-1}\phi_i\ell_i)(x) p_{N-n}(x)w_N(x)dx+\int (J_N^{-1}(1-\phi_i)\ell_i)(x) p_{N-n}(x)w_N(x)dx.\]
	Employing (\ref{first-order-form}), we may write the second integral as
	\[\sum_{k=0,1}\frac{C_{k+1,N}}{2}\int\int p_{N-n}(x)p_{N-k}(y)\frac{(1-\phi_i(y))}{(y-\lam_i)}w_N(x)w_N(y)dxdy.\label{l-dem}\]
	Observing that the function $(1-\phi_i(x))/(x-\lam_i)$ is smooth and bounded, we may apply Proposition \ref{Analytic Integral Double Integral} to see that the terms in (\ref{l-dem}) are of order $O(N^{-1})$. From this, we see that
	\[\int J_N^{-1}\ell_i(x) p_{N-n}(x)w_N(x)dx=\int (J_N^{-1}\phi_i\ell_i)(x) p_{N-n}(x)w_N(x)dx+O(N^{-1}).\]
	We now also write
	\[\int (J_N^{-1}\phi_i\ell_i)(x) p_{N-n}(x)w_N(x)dx=\]\[\int (J_N^{-1}\phi_i\ell_i)(x) p_{N-n}(x)\varphi_i(x)w_N(x)dx+\int (J_N^{-1}(\phi_i\ell_i))(x)(1-\varphi_i(x)) p_{N-n}(x)w_N(x)dx.\]
	We note that as $\phi_i\ell_i$ and $(1-\varphi)$ have disjoint support, we may expand $J_{N}^{-1}$ and employ (\ref{disjoint-support}) to rewrite the second integral on the right-hand side as
	\[\frac{1}{2}\int \phi_i(y)\ell_i(y)w_N(y)^{1/2}dy\int (1-\varphi_i(x))\sign(x-\lam_i) p_{N-n}(x)w_N(x)^{1/2}dx\]
	which is $O(N^{-1})$ by employing Lemma \ref{single-integral-lemma-l-q} and Corollary \ref{corr Single Integral of pN}. Altogether we see that
	\[\int J_N^{-1}\ell_i(x) p_{N-n}(x)w_N(x)dx=\int J_N^{-1}(\phi_i\ell_i)(x) \varphi_i(x)p_{N-n}(x)w_N(x)dx+O(N^{-1}).\]
	Similarly, we obtain that
	\[\int J_N^{-1}p_{N-n}(x)q_i(x)w_N(x)dx=\int J_N^{-1}(\phi_ip_{N-n})(x)\varphi_i(x)q_i(x) w_N(x)dx+O(N^{-1/2}),\]\[
	\int J_N^{-1}\ell_{i}(x)q_j(x)w_N(x)dx=\delta_{ij}\int J_N^{-1}(\phi_i\ell_{i})(x)\varphi_i(x)q_i(x)w_N(x)dx+O(N^{-1/2}).\]
	From these computations and (\ref{first-order-form}), we see that to complete the proof of Proposition \ref{W-Integral-Proposition}, it suffices to show the following.
	\begin{lem}
		\label{Over Integral Double Integral}
		There is $\delta>0$ such that if $\phi$ and $\varphi$ are smooth functions supported on $(\lam_j-\delta,\lam_j+\delta)$ for some $1\le j\le m$ then for any choice of fixed $n$ and $m$ we have that
		\[\frac{1}{2}\int \int \frac{\phi(x)p_{N-n}(x)}{x-\lam_j}\varphi(y) p_{N-m}(y)w_N(x)^{1/2}w_N(y)^{1/2}dxdy=O(N^{-1}\log(N)).\label{Integral-1-Over}\]
		Moreover, if $|n-m|\le 1$, then we additionally have that
		\[\frac{1}{2}\int\int \frac{\phi(x)p_{N-n}(x)}{x-\lam_j} \frac{\varphi(y)p_{N-m}(y)}{y-\lam_j} w_N(x)w_N(y)dxdy=O(N^{-1}\log(N)).\label{Integral-1-Over-2}\]
	\end{lem}
	\begin{proof}
		Again, for convenience, we write $\lam=\lam_j$ and $\alpha=\alpha_j$, and choose $\delta$ as before. We begin by proving the first statement. With notation as in Lemma \ref{both Bessel Lemma}, we may rewrite the left hand side of (\ref{Integral-1-Over}) as in (\ref{Oct1}) as
		\[\frac{1}{2\pi}\sum_{k,l=1,2}\int\int \frac{1}{x-\lam}h_{N,k}(x)[\cal{J}_{\alpha}(g_{N,n}(x))]_kl_{N,l}(y)[\cal{J}_{\alpha}(g_{N,m}(y))]_l\sign(x-y)dxdy.\label{Oct1-2}\]
		We claim that the summand for each choice of $(k,l)$ is $O(N^{-1}\log(N))$. To begin, we define, for $i=1,2$, the functions $I_i(x)=\frac{1}{2}\int \sign(x-y)[\cal{J}_{\alpha}(y)]_iy^{-1}dy$. We note that by performing integration by parts in $x$ (see the proof of Lemma \ref{integration-by-parts-lemma}) we may write (\ref{Oct1-2}) as the sum $I+II$ where
		\[I=\frac{1}{\pi}\sum_{k,l=1,2}\int [\cal{J}_{\alpha}(g_{N,m}(x))]_lI_k(g_{N,n}(x))\frac{h_{N,k}(x)l_{N,l}(x)g_{N,n}(x)}{g_{N,n}'(x)(x-\lam)}dx,\]
		\[II=-\frac{1}{2\pi}\sum_{k,l=1,2}\int \int [\cal{J}_{\alpha}(g_{N,m}(x))]_lI_k(g_{N,n}(y))l_{N,l}(y)\frac{d}{dx}(\frac{h_{N,k}(x)g_{N,n}(x)}{g_{N,n}'(x)(x-\lam)})\sign(x-y)dxdy.\]
		We observe that as $g_{N,n}(\lam)=0$ we have that $g_{N,n}(x)/((x-\lam)g_{N,n}'(x))$ as well as its derivatives are $O(1)$ for $x\in (\lam-\delta,\lam+\delta)$. We note that $I_1(x)$  ($I_2(x)$) is anti-symmetric (symmetric), respectively. Moreover, for $x>0$, we have the relations
		\[I_1(x)=\int_0^x[\cal{J}_{\alpha}(y)]_1y^{-1}dy=\sq{\pi}J_{\mu+1,-1/2}(x),\]
		\[I_2(x)=\int_x^{\infty}[\cal{J}_{\alpha}(y)]_2y^{-1}dy=\sq{\pi}\int_x^{\infty}y^{-1/2}J_{\alpha+1/2}(y)dy.\]
		Applying the asymptotics (\ref{IntegralAsymptotic}) and (\ref{moss-tow}) above, it follows that for $i=1,2$ we have that $I_i(x)=O(|x|^{-1})$ for $|x|\ge 1$, $I_i(x)=O(1)$ for $|x|\le 1$, and $[\cal{J}_{\alpha}(x)]_l=O(1)$ for all $x$. Applying these bounds pointwise, and noting that $h_{N,k}(x),l_{N,l}(x)=O(1)$, we see that the integrand of $I$ is uniformly $O(N^{-1}|x|^{-1})$ for $|x|\ge 1/N$, and $O(1)$ for $|x|\le 1/N$. Thus contribution of the integral over $\R\setminus(\lam-N^{-1},\lam+N^{-1})$ is of order $O(N^{-1}\log(N))$ and the contribution over $(\lam-N^{-1},\lam+N^{-1})$ is of order $O(N^{-1})$, so that $I=O(N^{-1}\log(N))$. Similarly we see that $II=O(N^{-1}\log(N))$. Together these establish (\ref{Integral-1-Over}).
		
		We now establish (\ref{Integral-1-Over-2}). The key difference, in this case, is that we may employ Proposition \ref{Asym p_N Bessel} only once to obtain the asymptotics of both $p_{N-n}$ and $p_{N-m}$. For notational ease, we will only prove the case when $n=0$ and $m=0,1$. We write $h_{N,k}(x)=\phi(x)[(I+R_N(x))T_{N,\lam}(x)]_{1k}$ and  $l_{N,k}(x)=i^{I(m=1)}\varphi(x)[(I+R_N(x))T_{N,\lam}(x)]_{(1+m)k}$, where $R_N$ is as in Proposition \ref{Asym p_N Bessel}. This is where we use the assumption that $|n-m|\le 1$ so that we may write both terms with the same asymptotic. Using this, we may rewrite the left-hand side of (\ref{Integral-1-Over-2}) as
		\[\frac{1}{2\pi}\sum_{k,l=1,2}\int\int \frac{h_{N,k}(x)l_{N,l}(y)}{(x-\lam)(y-\lam)}[\cal{J}_{\alpha}(Nf_\lam(x))]_k[\cal{J}_{\alpha}(Nf_{\lam}(y))]_l\sign(x-y)dxdy.\]
		If we define \[\bar{h}_{N,i}(x)=h_{N,i}(f_{\lam}^{-1}(x))\frac{(f_{\lam}^{-1})'(x)x}{(f_{\lam}^{-1}(x)-\lam)},\] and similarly for $\bar{l}_{N,i}$, we may further rewrite (\ref{Integral-1-Over-2}) as
		\[\frac{1}{2\pi}\sum_{k,l=1,2}\int\int (\frac{\bar{h}_{N,k}(x)\bar{l}_{N,l}(y)}{xy})[\cal{J}_{\alpha}(Nx)]_k[\cal{J}_{\alpha}(Ny)]_{l}\sign(x-y)dxdy.\label{Integral-2-2-2}\]
		Let us fix $\chi(x)$, a smooth, even, function of compact support, with $\chi(x)=1$ for $x\in(-\delta,\delta)$. Applying Taylor's theorem to $\bar{h}_{N,l}$ and $\bar{l}_{N,l}$ and proceeding as in the proof of (\ref{Integral-1-Over}), we see that that it suffices to show that for $i=1,2$ and $j=1,2$, that
		\[\frac{1}{2}\int\int \frac{\chi(x)\chi(y)}{xy}[\cal{J}_{\alpha}(Nx)]_i[\cal{J}_{\alpha}(Ny)]_j\sign(x-y)dxdy=O(N^{-1}\log(N)).\label{Weird-Integral-Over-1}\]
		This integral is antisymmetric under the interchange of $i$ and $j$. Thus it suffices to prove the case of $(i,j)=(1,2)$. By employing Lemma \ref{integration-by-parts-lemma} we may rewrite (\ref{Weird-Integral-Over-1}) in the case $(i,j)=(1,2)$ as the sum of three integrals $I+II+III$
		\[I=\int x^{-1}[\cal{J}_{\alpha}(Nx)]_1I_2(Nx)\chi(x)\chi(x)dx,\]
		\[II=\int I_1(Nx)I_2(Nx)\chi(x)\chi'(x)dx,\]
		\[III=\frac{1}{2}\int\int I_1(Nx)I_2(Ny)\chi'(x)\chi'(y)\sign(x-y)dxdy.\]
		As $\chi'$ vanishes in a neighborhood of $0$, and $I_i(x)=O(|x|^{-1})$, we see that $II,III=O(N^{-2})$. To deal with $I$, we first rewrite it as
		\[I=\int x^{-1}[\cal{J}_{\alpha}(x)]_1I_2(x)\chi(x/N)^2dx.\]
		As $[\cal{J}_{\alpha}(x)]_1=O(1)$, we note that $x^{-1}[\cal{J}_{\alpha}(x)]_1I_2(x)=O(|x|^{-2})$ for $x\in \R\setminus(-1,1)$, and $O(1)$ for $x\in [-1,1]$ so that $x^{-1}[\cal{J}_{\alpha}(x)]_1I_2(x)$ is integrable. In addition, we see that there is $C$ such that
		\[|\int x^{-1}[\cal{J}_{\alpha}(x)]_1I_2(x)(1-\chi(x/N)^2)dx|\le C\int_{|x|\ge \delta N}x^{-2}dx=2C\delta^{-1}N^{-1}.\]
		Combining these, we see that
		\[I=\int x^{-1}[\cal{J}_\alpha(x)]_1I_2(x)dx+O(N^{-1})=2\sq{\pi}\int_0^{\infty} x^{-1/2}J_{\alpha+1/2}(x)I_2(x)dx+O(N^{-1}).\]
		We write
		\[I'=\frac{1}{\sq{\pi}}\int_0^{\infty} x^{-1/2}J_{\alpha+1/2}(x)I_2(x)dx=\int_0^{\infty}\int_x^{\infty}x^{-1/2}J_{\alpha+1/2}(x)y^{-1/2}J_{\alpha-1/2}(y)dydx,\]
		we now need only show that $I'=0$. Making the change of variables $y=ax$, this integral becomes
		\[\int_0^{\infty}\int_1^{\infty}a^{-1/2}J_{\alpha+1/2}(x)J_{\alpha-1/2}(ax)dadx\label{Bessel-a-int}.\]
		We note that by equation 11.4.41 of \cite{AS}, we have that
		\[\int_0^{\infty}a^{-1/2}J_{\alpha+1/2}(x)J_{\alpha-1/2}(ax)dx=0;\;\;\;a>1\label{Abram}.\]
		This shows that (\ref{Bessel-a-int}) vanishes when the integrals are taken in the opposite order. As the integrals are not absolutely convergent, though, we must proceed with care. Applying the above bounds on $I_2$ and $J_{\alpha+1/2}$, we have that
		\[\int_0^{L}\int_{L}^{\infty}a^{-1/2}J_{\alpha+1/2}(x)J_{\alpha-1/2}(ax)dadx=\frac{1}{\sq{\pi}}\int_0^L x^{-1/2}J_{\alpha+1/2}(x)I_2(Lx)dx=O(L^{-1}\log(L)).\]
		Thus we have that
		\[I'=\lim_{L\to \infty}\int_0^{L}\int_{1}^{\infty}a^{-1/2}J_{\alpha+1/2}(x)J_{\alpha-1/2}(ax)dadx=\lim_{L\to \infty}\int_0^{L}\int_{1}^{L}a^{-1/2}J_{\alpha+1/2}(x)J_{\alpha-1/2}(ax)dadx.\]
		By employing (\ref{BesselAsymptotic}) and the double-angle formula, we may compute that
		\[\int_1^{L}\int_L^{\infty}a^{-1/2}J_{\alpha+1/2}(x)J_{\alpha-1/2}(ax)dxda=\]
		\[-\frac{2}{\pi}\int_1^{L}\int_L^{\infty}\bigg(\frac{\sin(x+\alpha\frac{\pi}{2})\cos(ax+\alpha\frac{\pi}{2})}{xa}+O(x^{-2}a^{-1})\bigg)dxda=\]
		\[-\frac{2}{\pi}\int_1^{L}\frac{1}{2a}\bigg[\int_{L}^{\infty}\
		sin((1+a)x+\alpha\pi)x^{-1}dx-\int_{L}^{\infty}\sin((a-1)x)x^{-1}dx\bigg]da+O(\log(L)L^{-1}).\]
		We note that there is $C$ such that for $y>0$, we have that $\int_y^{\infty}\sin(x)x^{-1}dx\le Cy^{-1}$. Employing this, we see that 
		\[|\int_1^{L}\frac{1}{a}\int_{L}^{\infty}\sin((1+a)x+\alpha\pi)x^{-1}dxda|\le C^2\int_1^{L}\frac{1}{aL}da=O(L^{-1}\log(L)).\]
		We also note that there is $C$ such that for $y>0$
		\[|\int_{y}^{\infty}\sin(x)x^{-1}dx|\le C.\]
		From this, we see that
		\[|\int_1^{L}\frac{1}{a}\int_{L}^{\infty}\sin((a-1)x)x^{-1}dxda|\le |\int_{1}^{1+L^{-1}}\frac{1}{a}\int_{L(a-1)}^{\infty}\sin(x)x^{-1}dxda|+\]
		\[|\int_{1+L^{-1}}^{L}\frac{1}{a}\int_{L(a-1)}^{\infty}\sin(x)x^{-1}dxda|\le C|\int_{1+L^{-1}}^{L}\frac{1}{La(a-1)}dxda|+C|\int_{1+L^{-1}}^{L}\frac{1}{a}da|.\label{tado}\]
		The first integral on the right-most side of (\ref{tado}) is  of order $O(L^{-1})$ and the second is of order $O(L^{-1}\log(L))$. Altogether we see that
		\[\lim_{L\to \infty}\int_1^{L}a^{-1/2}\int_L^{\infty}J_{\alpha+1/2}(x)J_{\alpha-1/2}(ax)dxda=0.\]
		Thus finally
		\[\lim_{L\to \infty}\int_0^{L}\int_{1}^{L}a^{-1/2}J_{\alpha+1/2}(x)J_{\alpha-1/2}(ax)dadx=\int_0^{\infty}\int_1^{\infty}a^{-1/2}J_{\alpha+1/2}(x)J_{\alpha-1/2}(ax)dadx=0.\]
		This shows that $I'=0$ which completes the proof of (\ref{Integral-1-Over}).
	\end{proof}

	\section{Proof of Proposition \ref{merging-W-Integral-Proposition}\label{merging section}}
	
	In this section, we will provide a proof of Proposition \ref{merging-W-Integral-Proposition}. The method of proof will be almost identical in structure to the proof of Proposition \ref{W-Integral-Proposition} given in the last section. The main differences that occur are that we must now choose $N$-dependent partitions of unity to isolate the regions of each specified asymptotics and that now the error asymptotics of $R_N$ are of order $O(N^{\gamma-1})$ in the regions around $\{\lam_1,\lam_2\}$. We will now, for the remainder of this section, assume that we are in the case of Proposition \ref{merging-W-Integral-Proposition}. That is, we will fix $1>\gamma>0$ and $\epsilon>0$, and only consider choices of (possibly $N$-dependant) $\lam_1,\lam_2\in (-1+\epsilon,1-\epsilon)$ such that $(\lam_1-\lam_2)>N^{-\gamma}$.
	
	We will first focus on integral results in the region $(\lam_2-\delta,\lam_1+\delta)$. Let us fix $\epsilon,\delta_0>0$ as in Proposition \ref{Merging Error Bessel}, and $\delta<\min(\epsilon/4, 1/4,\delta_0/2)$. Let us denote $\delta_N=\delta(\lam_1-\lam_2)$. We will fix a symmetric smooth function $\chi$, such that $\chi(x)=1$ for $|x|<1/8$ and $\chi(x)=0$ for $|x|>1/4$. We define $\chi_{N,i}(x)=\chi((x-\lam_i)\delta_N^{-1})$.
	We additionally choose another smooth compactly-supported function $\bar{\chi}$, supported on $(\lam_2-\delta/2,\lam_1+\delta/2)$, such that $\bar{\chi}(x)=1$ for $x\in (\lam_2-\delta/4,\lam_1+\delta/4)$. We define $\chi_{N,0}(x)=\bar{\chi}(x)-\chi_{N,1}(x)-\chi_{N,2}(x)$. A key property of these functions is the following: for $k\in \N$ and $i\in \{0,1,2\}$, there is $C$ such that
	\[\int |\chi_{N,i}^{(k)}(x)|dx\le C (\delta_N^{-k+1}+\delta_{ik}\delta_{k0})\le C^2(N^{\gamma(k-1)}+\delta_{ik}\delta_{k0}),\;\;\sup_{x}|\chi^{(k)}_{N,i}(x)|\le C\delta_{N}^{-k}\le C^2N^{\gamma k}.\label{qi-bounds}\]
	
	In addition, we will define $\chi_{N,i,j}(x)=\chi_{N,i}(x)/(x-\lam_j)$. An important subtlety that arises in the current context is that for $i\neq   j$, the function $\chi_{N,i,j}(x)$ is no longer $O(1)$. On the other hand, we instead observe that for $i\neq j$, and $k\in \N$, there is $C>0$, such that
	\[\int |\chi_{N,i,j}^{(k)}(x)|dx\le C (\delta_N^{-k}+\log(\delta_N)\delta_{ik}\delta_{k0})\le C^2(N^{\gamma k}+\gamma\log(N)\delta_{ik}\delta_{k0}),\]\[\sup_{x}|\chi_{N,i,j}^{(k)}(x)|\le C\delta_{N}^{-k-1}\le C^2N^{\gamma (k+1)}.\label{chi-bounds}\]
	
	The key property we will use for $\chi_{N,i}$ and $\chi_{N,i,j}$ will be (\ref{qi-bounds}) and (\ref{chi-bounds}), respectively, the majority of proofs of the results below are identical for both functions, with slightly worse bounds for the latter. We begin our analysis with the following modification of Lemma \ref{Bulk Single Integral of pN}.
	\begin{lem}
		\label{mer-Bulk Single Integral of pN}
		Let $\phi$ be a smooth function. Then for each fixed $k\in \Z$, $i=1,2$, and choice of $c\in \N$, we have that
		\[\int \phi(x)\chi_{N,0}(x)p_{N-k}(x)w_N(x)^{1/2}dx=O(N^{-c}),\label{sta-1}\]
		\[\int \phi(x)\chi_{N,0,i}(x)p_{N-k}(x)w_N(x)^{1/2}dx=O(N^{-c}).\label{sta-2}\]
	\end{lem}
	\begin{proof}
		We may assume that $c>1$. We first establish (\ref{sta-1}). For notational clarity, we assume that $k=0$. We define $f_{N,l}$ as in Lemma \ref{Bulk Single Integral of pN}. By (\ref{integration-by-parts}) we see that for $c\in \N$,
		\[|\int \phi(x)\chi_{N,0}(x)p_{N-k}(x)w_N(x)^{1/2}dx|\le \frac{1}{\pi^{1/2}}\sum_{l=1,2} \frac{1}{N^c}\int| \frac{d^c}{dy^c}((s^{-1})'(y)\chi_{N,0}(s^{-1}(y))f_{N,l}(s^{-1}(y)))|dy.\]
		We observe that $f_{N,l}^{(c)}(x)=O(1)$ for $c\in \N$. By the product formula, we may inductively write
		\[ \frac{d^c}{dy^c}((s^{-1})'(y)\chi_{N,0}(s^{-1}(y))f_{N,l}(s^{-1}(y)))=\sum_{k=0}^{c}\chi_{N,0}^{(k)}(s^{-1}(y))a_{k,l,N}(y),\]
		where $a_{k,l,N}(y)$ are smooth functions with $a_{k,l,N}(y)=O(1)$ uniformly. Thus we have that there is $C$ such that
		\[\int| \frac{d^c}{dy^c}((s^{-1})'(y)\chi_{N,0}(s^{-1}(y))f_{N,l}(s^{-1}(y)))|dy\le \sum_{k=0}^{c}\int |\chi_{N,0}^{(k)}(y)a_{k,l,N}(s(y))s'(y)|dy\le \]
		\[C\sum_{k=0}^{c}\int |\chi_{N,0}^{(k)}(y)|dy\le C^2 N^{c\gamma},\]
		where in the second inequality, we have used (\ref{qi-bounds}). In particular, we see that 
		\[\int \phi(x)\chi_{N,0}(x)p_{N-k}(x)w_N(x)^{1/2}dx=O(N^{c(\gamma-1)}).\]
		As $c\in \N$ was arbitrary, we see that taking $c\mapsto \ceil{c/(1-\gamma)}$, we obtain (\ref{sta-1}).
		
		To show (\ref{sta-2}), the above proof works, replacing (\ref{qi-bounds}) for (\ref{chi-bounds}) to show that the left-hand side of (\ref{sta-2}) is $O(N^{(c+1)\gamma-c})$. Adjusting $c$ again gives (\ref{sta-2}).
	\end{proof}
	
	We now state the modification of Lemma \ref{Bulk double Integral of pN} that we shall use.
	\begin{lem}
		\label{mer-Bulk double Integral of pN}
		Let $\phi$ and $\varphi$ be smooth functions. Then for fixed choice of $n$ and $m$ and $i,j\in \{1,2\}$, we have that
		\[\frac{1}{2}\int \int \phi(x)\varphi(y)\chi_{N,0}(x)\chi_{N,0}(y)p_{N-n}(x)p_{N-m}(y)\Sign(x-y)w_N(x)^{1/2}w_N(y)^{1/2}dxdy=\]\[\frac{1}{2\pi N}\int_{-1}^{1} \frac{\phi(x)\varphi(x)\chi_{N,0}(x)^2}{(1-x^2)}\sin((m-n)\arcos(x))dx+O(N^{\gamma-2}),\label{stan-1}\]
		\[\int \int \phi(x)\varphi(y)\chi_{N,0,i}(x)\chi_{N,0}(y)p_{N-n}(x)p_{N-m}(y)\Sign(x-y)w_N(x)^{1/2}w_N(y)^{1/2}dxdy=O(N^{-1}),\label{stan-2}\]
		\[\int \int \phi(x)\varphi(y)\chi_{N,0,i}(x)\chi_{N,0,j}(y)p_{N-n}(x)p_{N-m}(y)\Sign(x-y)w_N(x)^{1/2}w_N(y)^{1/2}dxdy=\]\[O(N^{-\min(1,2(1-\gamma))}).\label{stan-3}\]
	\end{lem}
	\begin{proof}
		We begin with the proof of (\ref{stan-1}). As in the proof of Lemma \ref{Bulk double Integral of pN}, we write
		\[\frac{1}{2}\int \int \phi(x)\varphi(y)\chi_{N,0}(x)\chi_{N,0}(y)p_{N-n}(x)p_{N-m}(y)\Sign(x-y)w_N(x)^{1/2}w_N(y)^{1/2}dxdy=\]
		\[\frac{1}{2\pi}\sum_{j,k=1,2}\int \int\chi_{N,0}(x)\chi_{N,0}(y) f_{N,j,k}(x,y)e^{-(-1)^jNis(x)-(-1)^kNis(y)}\sign(x-y)dxdy,\]
		where here $f_{N,j,k}$ is defined as in the proof of Lemma \ref{Bulk double Integral of pN}. We denote \[\bar{f}_{N,j,k}(x,y)=(s^{-1})'(x)(s^{-1})'(y)f_{N,j,k}(s^{-1}(x),s^{-1}(y)).\] We have that for $c\in \N$ that $\bar{f}_{N,j,k}^{(c)}(x)=O(1)$ uniformly in $x$. One may apply Lemma \ref{DoubleWatsonsLemma} as in the proof of Lemma \ref{Bulk double Integral of pN} to write
		\[\frac{1}{2}\sum_{j,k=1,2}\int \int\chi_{N,0}(x)\chi_{N,0}(y) f_{N,j,k}(x,y)e^{-(-1)^jNis(x)-(-1)^kNis(y)}\sign(x-y)dxdy=\]\[-\frac{i}{N}\int \frac{f_{N,1,2}(x,x)\chi_{N,0}(x)^2}{|s'(x)|}dx+\frac{i}{N}\int \frac{f_{N,2,1}(x,x)\chi_{N,0}(x)^2}{|s'(x)|}dx+E_N,\label{mer-bulk-error-ignore}\]
		where $E_N$ is bounded by
		\begin{align}
			\sum_{j,k=1,2}\frac{4}{N^{2}}\bigg(\int\int \|\nabla(\chi_{N,0}(s^{-1}(x))\chi_{N,0}(s^{-1}(y))\bar{f}_{N,j,k}(x,y))\|dxdy+\\
			4\int |\frac{d}{dx}(\chi_{N,0}(s^{-1}(x))\chi_{N,0}(s^{-1}(x))\bar{f}_{N,j,k}(x,x))|dx\bigg).\label{ignore-merging}
		\end{align}
		As in the proof of Lemma \ref{mer-Bulk Single Integral of pN}, and recalling that $\|(x,y)\|\le |x|+|y|$, we see that there is $C$ such that (\ref{ignore-merging}) is bounded by
		\[CN^{-2}(\int|\chi_{N,0}'(x)|dx\int|\chi_{N,0}(y)|dy+\int|\chi_{N,0}(x)|dx\int|\chi_{N,0}(y)|dy+\]
		\[\int |\chi_{N,0}'(x)||\chi_{N,0}(x)|dx+\int |\chi_{N,0}(x)|^2dx).\]
		We further note that
		\[\int |\chi_{N,0}'(x)||\chi_{N,0}(x)|dx+\int |\chi_{N,0}(x)|^2dx\le (\sup_{x}|\chi_{N,0}(x)|)(\int |\chi_{N,0}'(x)|dx+\int |\chi_{N,0}(x)|dx).\]
		All of these terms are $O(1)$ by (\ref{qi-bounds}). Thus we see that we have that $E_N=O(N^{-2})$. As we have that $R_N(x),\bar{R}_N(x)=$ $O(N^{\gamma-1})$ we have that
		\[f_{N,j,k}(x,y)=\phi(x)\varphi(y)[T_{\infty}(x)]_{1j}[T_{\infty}(y)]_{1k}e^{(-1)^ji(n\arcos(x)+\pi (I(x<\lam_1)+I(x<\lam_2))\alpha)}\times\]
		\[e^{(-1)^ki(m\arcos(y)+\pi (I(y<\lam_1)+I(y<\lam_2)))\alpha}+O(N^{\gamma-1}).\label{hiso}\]
		The remainder of the proof of (\ref{stan-1}) consists of inserting (\ref{hiso}) into (\ref{mer-bulk-error-ignore}) and simplifying the resulting expression identically to as in the proof of Lemma \ref{Bulk double Integral of pN}.
		
		To establish (\ref{stan-2}), the entire proof works identically, except that $E_N$ is now bounded by
		\[CN^{-2}[\int|\chi_{N,0,i}'(x)|dx\int|\chi_{N,0}(x)|dx+\int|\chi_{N,0}'(x)|dx\int|\chi_{N,0,i}(x)|dx+\]\[\int|\chi_{N,0}(x)|dx\int|\chi_{N,0,i}(x)|dx+
		(\sup_{x}|\chi_{N,0}(x)|)(\int |\chi_{N,0,i}'(x)|dx+
		\int|\chi_{N,0,i}(x)|dx)+\]\[
		(\sup_{x}|\chi_{N,0}(x)|)\int |\chi_{N,0,i}'(x)|dx],\]
		for some $C>0$. Employing (\ref{qi-bounds}) and (\ref{chi-bounds}) we see that $E_N=O(N^{\gamma-2})$, which establishes (\ref{stan-2}). 
		
		In the case of (\ref{stan-3}),
		we have that $E_N$ is now bounded by
		\[CN^{-2}[\int|\chi_{N,0,i}'(x)|dx\int|\chi_{N,0,j}(x)|dx+\int|\chi_{N,0,j}'(x)|dx\int|\chi_{N,0,i}(x)|dx+\]\[\int|\chi_{N,0,j}(x)|dx\int|\chi_{N,0,i}(x)|dx+
		(\sup_{x}|\chi_{N,0,j}(x)|)(\int |\chi_{N,0,i}'(x)|dx+
		\int|\chi_{N,0,i}(x)|dx)+\]\[
		(\sup_{x}|\chi_{N,0,j}(x)|)\int |\chi_{N,0,i}'(x)|dx],\]
		for some $C$. Again employing (\ref{chi-bounds}) we now see that $E_N=O(N^{2\gamma-2})$, which establishes (\ref{stan-3}).
	\end{proof}
	
	We now will focus on the modifications that need to be made in the region around $\lam_i$. We note that the preparatory Lemma \ref{Bessel Matching Lemma} still holds as stated. 
	
	\begin{lem}
		\label{mer-both Bessel Lemma}
		Let $\phi$ and $\varphi$ be smooth functions. Then for any $i,j\in\{0,1,2\}$, and fixed $n$ and $m$ we have that
		\[\int \phi(x)\chi_{N,i}(x)p_{N-n}(x)w_N(x)^{1/2}dx=O(N^{-1}),\label{mer-bes-1}\]
		\[\frac{1}{2}\int\int  \phi(x)\varphi(y)\chi_{N,i}(x)\chi_{N,j}(y)p_{N-n}(x)p_{N-m}(y)\sign(x-y)w_N(x)^{1/2}w_N(y)^{1/2}dxdy=\]\[\frac{1}{2\pi N}\int_{-1}^{1} \frac{\phi(x)\varphi(x)\chi_{N,i}(x)\chi_{N,j}(x)}{(1-x^2)}\sin((m-n)\arcos(x))dx+O(N^{\gamma-2}).\label{mer-bes-2}\]
		In addition, with $r,t=1,2$, such that $i\neq r$ and $j\neq t$, we have that
		\[\int \phi(x)\chi_{N,i,r}(x)p_{N-n}(x)w_N(x)^{1/2}dx=O(N^{-\min(1,2(1-\gamma))}),\label{mer-bes-1-1}\]
		\[\int\int  \phi(x)\varphi(y)\chi_{N,i,r}(x)\chi_{N,j}(y)p_{N-n}(x)p_{N-m}(y)\sign(x-y)w_N(x)^{1/2}w_N(y)^{1/2}dxdy=O(N^{-1}),\label{mer-bes-2-1}\]
		\[\int\int  \phi(x)\varphi(y)\chi_{N,i,r}(x)\chi_{N,j,t}(y)p_{N-n}(x)p_{N-m}(y)\sign(x-y)w_N(x)^{1/2}w_N(y)^{1/2}dxdy=O(N^{-\min(1,2(1-\gamma))}).\label{mer-bes-2-2}\]
	\end{lem}
	\begin{proof}
		We begin with the proof of (\ref{mer-bes-1}). We have already established the case of $i=0$, so we assume that $i>0$, and denote $\lam_i=\lam$. We observe that proceeding identically to Lemma \ref{both Bessel Lemma} we have that there is $C$ such that
		\[|\int_{\lam}^{\lam\pm \delta}\phi(x)p_{N-n}(x)\chi_{N,i}(x)w_N(x)^{1/2}dx|\le N^{-1}\pi^{1/2}(D(\alpha+1/2,1/2)|g_{N,1}(\lam)|+D(\alpha-1/2,1/2)|g_{N,2}(\lam)|)\times\]\[(f_{N,n,\lam}^{-1})'(0)+\frac{C}{N^2}\sum_{k=1,2}\sum_{l=0}^{2}|\int_{0}^{\pm \infty}\frac{d^l}{dx^{l}}((f_{N,n,\lam}^{-1})'(x)g_{N,k}(f_{N,n,\lam}^{-1}(x))\chi_{N,i}(f_{N,n,\lam}^{-1}(x)))|dx,\label{ignor-ja}\]
		where $g_{N,k}$ is defined as in the proof of Lemma \ref{both Bessel Lemma}. The first term on the right-hand side is clearly $O(N^{-1})$. Additionally, we may show, as before, that there is $C$, such that the second term of the right-hand side of (\ref{ignor-ja}) may be bounded by
		\[\frac{C^2}{N^2}\sum_{l=0}^{2}\int|\chi_{N,i}^{(l)}(x)|dx=O(N^{\gamma-2}).\]
		Together these bounds establish (\ref{mer-bes-1}). The same argument may be used to establish (\ref{mer-bes-1-1}).
		
		To show (\ref{mer-bes-2}), we first observe that the case of $i=j=0$ was established in Lemma \ref{mer-Bulk double Integral of pN}. In the case that $i>0$ and $j=0$, let us write $\hat{\chi}(x)=\chi(4x)$ and $\hat{\chi}_{N,i}(x)=\hat{\chi}((x-\lam_i)\delta_N^{-1})$, so that $\supp(\hat{\chi}_{N,i})\cap \supp(\chi_{N,0})=\varnothing$. Thus we see that
		\[\int\int  \phi(x)\varphi(y)\hat{\chi}_{N,i}(x)\chi_{N,0}(y)p_{N-n}(x)p_{N-m}(y)\sign(x-y)w_N(x)^{1/2}w_N(y)^{1/2}dxdy=\]
		\[\int \phi(x)\hat{\chi}_{N,i}(x)p_{N-n}(x)w_N(x)^{1/2}dx\int  \varphi(y)\chi_{N,0}(y)p_{N-m}(y)\sign(\lam_i-y)w_N(y)^{1/2}dy.\label{die-go}\]
		We note that as $\hat{\chi}_{N,i}$ clearly satisfies the bounds (\ref{qi-bounds}) and has a support contained within that of $\chi_{N,i}$, the above proof of (\ref{mer-bes-1}) with $\chi_{N,i}$ replaced by $\hat{\chi}_{N,i}$ shows that
		\[\int \phi(x)\hat{\chi}_{N,i}(x)p_{N-n}(x)w_N(x)^{1/2}dx=O(N^{-1}).\]
		We also observe that while Lemma \ref{mer-Bulk Single Integral of pN} does not formally apply to the second integral on the right-hand side of (\ref{die-go}), as the function $\phi(x)\sign(\lam_i-x)$ is $N$-dependant though $\lam_i$, this function (as well as its derivatives of up to any finite order) are still uniformly bounded on the support of $\chi_{N,0}$, so that the proof of Lemma \ref{mer-Bulk Single Integral of pN} with $\phi(x)$ replaced by $\phi(x)\sign(\lam_i-x)$ shows that for any $c>0$ we also have that
		\[\int  \varphi(y)\chi_{N,0}(y)p_{N-m}(y)\sign(\lam_i-y)w_N(y)^{1/2}dy=O(N^{-c}).\]
		Together these results show that
		\[\int\int  \phi(x)\varphi(y)\hat{\chi}_{N,i}(x)\chi_{N,0}(y)p_{N-n}(x)p_{N-m}(y)\sign(x-y)w_N(x)^{1/2}w_N(y)^{1/2}dxdy=O(N^{-2}).\label{colum}\]
		We also observe that $\supp(\chi_{N,i}-\hat{\chi}_{N,i})\subseteq (\lam_i-\delta/2,\lam_i-\delta/32)\cup (\lam_i+\delta/32,\lam_i+\delta/2)$, and that $\chi_{N,i}-\hat{\chi}_{N,i}$ satisfies the bounds (\ref{qi-bounds}). From this, we see that the proof of Lemma \ref{mer-Bulk double Integral of pN} with $\chi_{N,0}$ replaced by $\chi_{N, i}-\hat{\chi}_{N, i}$ establishes that
		\[\frac{1}{2}\int\int  \phi(x)\varphi(y)(\chi_{N,i}(x)-\hat{\chi}_{N,i}(x))\chi_{N,0}(y)p_{N-n}(x)p_{N-m}(y)\sign(x-y)w_N(x)^{1/2}w_N(y)^{1/2}dxdy=\]\[\frac{1}{2\pi N}\int_{-1}^{1} \frac{\phi(x)\varphi(x)(\chi_{N,i}(x)-\hat{\chi}_{N,i}(x))\chi_{N,0}(x)}{(1-x^2)}\sin((m-n)\arcos(x))dx+O(N^{\gamma-2})=\]
		\[\frac{1}{2\pi N}\int_{-1}^{1} \frac{\phi(x)\varphi(x)\chi_{N,i}(x)\chi_{N,0}(x)}{(1-x^2)}\sin((m-n)\arcos(x))dx+O(N^{\gamma-2})\label{mer-bes-2-mod}\]
		where in the last step we have used that $\hat{\chi}_{N,i}\chi_{N,0}(x)=0$. Combining (\ref{colum}) with (\ref{mer-bes-2-mod}) establishes (\ref{mer-bes-2}) in this case. The case of $i=0$ and $j>0$ of (\ref{mer-bes-2}) follows symmetrically, so we now assume that $i,j>0$.
		
		We note that if additionally $i\neq j$, then (\ref{mer-bes-2}) follows from (\ref{mer-bes-1}) by employing the disjointness of the support of $\chi_{N,i}$ and $\chi_{N,j}$ (\ref{disjoint-support}) to rewrite this as a product of single integrals. Thus we assume that $i=j$ and write, as before, $\lam_i=\lam$. As before, we additionally note that by the proof of (\ref{mer-bes-1}), we have that 
		\[\int\int \phi(x)\varphi(y)\chi_{N,i}(x)\chi_{N,i}(y)p_{N-n}(x)p_{N-m}(y)\sign(x-y)w_N(x)^{1/2}w_N(y)^{1/2}dxdy=\]
		\[\int_{\lam}^{\infty}\int_{\lam}^{\infty}  \phi(x)\varphi(y)\chi_{N,i}(x)\chi_{N,i}(y)p_{N-n}(x)p_{N-m}(y)\sign(x-y)w_N(x)^{1/2}w_N(y)^{1/2}dxdy+\]
		\[\int_{-\infty}^{\lam}\int_{-\infty}^{\lam}  \phi(x)\varphi(y)\chi_{N,i}(x)\chi_{N,i}(y)p_{N-n}(x)p_{N-m}(y)\sign(x-y)w_N(x)^{1/2}w_N(y)^{1/2}dxdy+O(N^{\gamma-2}).\label{mer split 4}\]
		We will compute the asymptotics of the first integral on the right-hand side, the other being identical. Proceeding as in Lemma \ref{both Bessel Lemma} we may rewrite this integral as the sum $I+II+III$ with
		\[I=\frac{1}{\pi}\sum_{k,l=1,2}\int_\lam^{\infty} [\cal{J}_{\alpha}(g_{N,n}(x))]_k(\int_\lam^{g_{N,m}(x)}[\cal{J}_{\alpha}(z)]_ldz)\frac{h_{N,k}(x)l_{N,l}(x)\chi_{N,i}(x)^2}{g_{N,m}'(x)}dx,\]
		\[II=\frac{1}{\pi}\sum_{k,l=1,2}\int_\lam^{\infty} (\int _\lam ^{g_{N,n}(x)}[\cal{J}_{\alpha}(z)]_kdz)(\int_\lam^{g_{N,m}(x)}[\cal{J}_{\alpha}(z)]_ldz)\frac{\chi_{N,i}(x)h_{N,k}(x)}{g_{N,n}'(x)}\frac{d}{dx}(\frac{\chi_{N,i}(x)l_{N,l}(x)}{g_{N,m}'(x)})dx,\]
		\[III=\frac{1}{2\pi}\sum_{k,l=1,2}\int_\lam^{\infty}\int_\lam^{\infty} (\int _\lam^{g_{N,n}(x)}[\cal{J}_{\alpha}(z)]_kdz)(\int_\lam^{g_{N,m}(y)}[\cal{J}_{\alpha}(z)]_ldz)\times\]\[\frac{d}{dx}(\frac{\chi_{N,i}(x)h_{N,k}(x)}{g_{N,n}'(x)})\frac{d}{dy}(\frac{\chi_{N,i}(y)l_{N,l}(y)}{g_{N,m}'(y)})\sign(x-y)dxdy,\]
		where $l_{N,k},h_{N,k}$ are as in the proof of Lemma \ref{both Bessel Lemma}. We note that for any $c\in \N$, we have that $l_{N,k}^{(c)}(x),h_{N,k}^{(c)}(x)=O(1)$, which when supplemented with the asymptotics of (\ref{ig-ig2}) establish that there is $C$ such that
		\[|III|\le \frac{C}{N^2}\int \int (|\chi_{N,i}'(x)|+|\chi_{N,i}(x)|)(|\chi_{N,i}(y)|+|\chi_{N,i}'(y)|)dxdy,\]
		which shows that $III=O(N^{-2})$. Similarly
		\[|II|\le \frac{C}{N^2}(\sup_{x}|\chi_{N,i}(x)|)(\int|\chi_{N,i}'(x)|dx+\int|\chi_{N,i}(x)|dx),\]
		so that $II=O(N^{-2})$. As before, we may write $I$ as the sum of two terms
		\[I'=-\sum_{k,l=1,2}\int_\lam^{\infty} \sq{g_{N,n}(x)}J_{\alpha-(-1)^k1/2}(g_{N,n}(x))I_{\alpha-(-1)^l1/2,1/2}(g_{N,m}(x))\frac{h_{N,k}(x)l_{N,l}(x)\chi_{N,i}(x)^2}{g_{N,m}'(x)}dx\]
		\[II'=\sum_{k,l=1,2}D(\alpha-(-1)^l1/2,1/2)\int_\lam^{\infty} J_{\alpha-(-1)^k1/2}(g_{N,n}(x))\frac{h_{N,k}(x)l_{N,l}(x)\chi_{N,i}(x)^2}{g_{N,m}'(x)}dx.\]
		Employing Lemma \ref{SingleBesselLemma}, we see that there is $C$ such that
		\[|II'|\le \frac{C}{N^2}+\frac{C}{N^3}\sum_{l=0}^{2}\int_0^{\infty}|\frac{d^l}{dx^l}(\chi_{N,i}(x)^2)|dx\le\]\[ C^2\bigg(N^{-2}+N^{-3}(\sup_{x}|\chi_{N,i}(x)|)\sum_{l=0}^{2}\int_0^{\infty}|\chi_{N,i}^{(l)}(x)|dx+N^{-3}(\sup_{x}|\chi_{N,i}'(x)|)\int_0^{\infty}|\chi_{N,i}'(x)|dx\bigg)=C^3(N^{-2}+N^{\gamma-3}).\label{mer-2-new}\]
		Employing  the asymptotics (\ref{IntegralAsymptotic}) and (\ref{Bessel Asymptotic}), as in the proof of Lemma \ref{both Bessel Lemma}, we see that
		\[I'=\frac{1}{(N-m)\pi}\sum_{k,l=1,2}\int_{\lam}^{\infty} [\cal{I}_\alpha(g_{N,n}(x))]_k[\cal{I}_{\alpha-1}(g_{N,m}(x))]_l\frac{h_{N,k}(x)l_{N,l}(x)\chi_{N,i}(x)^2}{f_{N,m,\lam}'(x)}dx+O(N^{-2}).\label{sti-1}\]
		Now using the fact that $R_N(x),\bar{R}_N(x)=O(N^{\gamma-1})$ for $x\in (\lam-\delta_N,\lam+\delta_N)$, we derive that
		\[I'=\frac{1}{(N-m)\pi}\sum_{k,l=1,2}\int_{\lam}^{\infty} [\cal{I}_\alpha(g_{N,n}(x))]_k[\cal{I}_{\alpha-1}(g_{N,m}(x))]_l\phi(x)
		\varphi(x)\chi_{N,i}(x)^2\times \]\[\frac{[T_{N,n,\lam}(x)]_{1k}[T_{N,m,\lam}(x)]_{1l}}{f_{N,m,\lam}'(x)}dx+O(N^{\gamma-2}).\]
		The remainder of the proof of (\ref{mer-bes-2}) now proceeds identically to that of Lemma \ref{both Bessel Lemma}. 
		
		To show (\ref{mer-bes-2-1}) one proceeds identically to the case of (\ref{mer-bes-2}) above, with the only differences being that now the error in (\ref{mer split 4}) is given by $O(N^{-1-\min(1,2(1-\gamma))})$, $II,III=O(N^{\gamma-2})$ and $II'=O(N^{-\min(3-2\gamma,2)})$. The
		case of (\ref{mer-bes-2-2}) is similar as well, with the error in (\ref{mer-bes-2-mod}) now being given by $O(N^{2(\gamma-1)})$, the error in (\ref{mer split 4}) now being given by $O(N^{-2\min(1,2(1-\gamma))})$, and with the new error bounds $II,III=O(N^{2(\gamma-1)})$ and $II'=O(N^{-\min(3(1-\gamma),2)})$
	\end{proof} 
	
	Now we will now discuss the remaining asymptotics which we need outside of $(\lam_2-\delta,\lam_1+\delta)$. As we will not need $N$-dependant partitions in this region, and as the bound on the error term given in Proposition \ref{Merging Error Bulk} is still of order $O(N^{-1})$, we see that the proofs given for Lemmas \ref{Bulk Single Integral of pN}, \ref{Bulk double Integral of pN}, and \ref{both airy lemma}, require no modifications on their respective regions. Explicitly, together they imply the following result.
	\begin{lem}
		\label{mer-Analytic Integral Double Integral}
		Let $\phi$ and $\varphi$ be smooth functions of subexponential growth, both vanishing on $(\lam_2-\delta,\lam_1+\delta)$. Then we have that for any fixed $n$ and $m$ that
		\[\int \phi(x)p_{N-k}(x)w_N(x)^{1/2}dx=O(N^{-1/2}).\]
		\[\int  J_N^{-1}(\varphi p_{N-m})(x)\phi(x)p_{N-n}(x)w_N(x)dx=\frac{1}{2\pi N}[\int_{-1}^{1} \frac{\phi(x)\varphi(x)}{(1-x^2)}\sin((m-n)\arcos(x))dx]+\]\[\frac{1}{4N}[(-1)^{N-m}\phi(1)\varphi(-1)-(-1)^{N-n}\phi(-1)\varphi(1)]+O(N^{-7/6}).\]
	\end{lem}
	Combining this Lemma with the above results, we achieve the following.
	\begin{lem}
		Let $\phi$ and $\varphi$ be smooth functions of subexponential growth. Then we have that for any fixed $n$ and $m$ that
		\[\int J_N^{-1}(\varphi p_{N-m})(x)\phi(x)p_{N-n}(x)w_N(x)dx=\frac{1}{2\pi N}[\int_{-1}^{1} \frac{\phi(x)\varphi(x)}{(1-x^2)}\sin((m-n)\arcos(x))dx]+\]
		\[\frac{1}{4N}[(-1)^{N-m}\phi(1)\varphi(-1)-(-1)^{N-n}\phi(-1)\varphi(1)]+O(N^{-\min(2-\gamma,7/6)}).\]
	\end{lem}
	Proceeding as in the prior section, we see that this result implies that when $N$ is even that
	\[\int  J_N^{-1}( p_{N})(x)p_{N-1}(x)w_N(x)dx=O(N^{-\min(2-\gamma,7/6)}),\]
	which is the first result required by Proposition \ref{merging-W-Integral-Proposition}.
	
	We will now begin discussing the required integrals involving $\ell_i$ and $q_i$. A technicality present in this case is that the term $c_i$ defined in Proposition \ref{l and q asym} is now $N$-dependant and diverging as $|\lam_1-\lam_2|\to 0$. To avoid confusion, we will use the notation $c_{i, N}$ for the remainder of the section to emphasize this $N$-dependence. We also observe that for $i\in \{1,2\}$, both $c_{i,N}|\lam_1-\lam_2|^{\alpha}$ and $c_{i,N}^{-1}|\lam_1-\lam_2|^{-\alpha}$ are uniformly bounded for $\lam_1,\lam_2\in (-1+\epsilon,1-\epsilon)$.
	
	\begin{lem}
		\label{mer-single-integral-lemma-l-q}
		For $\phi$ a smooth function supported on $(-1+\delta,1-\delta)$, and any choice of $i\in {1,2}$ and $j\in \{1,2,3\}$, we have that
		\[ \int \phi(x)\chi_{N,j}(x)\ell_i(x)w_N(x)^{1/2}dx=c_{i,N}^{-1}O(N^{\gamma-1}),\;\;\int \phi(x)\chi_{N,j}(x)q_i(x)w_N(x)^{1/2}dx=c_{i,N}O(1).\]
	\end{lem}
	\begin{proof}
		In sight of Proposition \ref{Merging Error Bessel}, and proceeding as in the proof of Lemma \ref{single-integral-lemma-l-q}, we are reduced to showing that
		\[\int \phi(x)\chi_{N,j,i}(x)\begin{bmatrix}\pi^{1/2}p_{N}(x)\\-i\pi^{1/2}p_{N-1}(x)\end{bmatrix}w_N(x)^{1/2}dx=T_{N,\lam_i}(\lam_i)\begin{bmatrix}O(1)\\0\end{bmatrix}+O(N^{\gamma-1}).\label{zoltron}\]
		We will actually show that the error is of order $O(N^{-1})$, but as the error terms in Proposition \ref{Merging Error Bessel} are of order $O(N^{\gamma-1})$, this does not improve the result. Now in the case that $i\neq j$, (\ref{zoltron}) follows from (\ref{mer-bes-1-1}), and thus we will focus on the case of $i=j$, and denote $\lam=\lam_i$ and $\alpha=\alpha_i$ as before. Applying Proposition \ref{Merging Error Bessel} we may write
		\[\int \phi(x)\chi_{N,i,i}(x)\begin{bmatrix}\pi^{1/2}p_{N}(x)\\-i\pi^{1/2}p_{N-1}(x)\end{bmatrix}w_N(x)^{1/2}dx=\int \frac{\xi_{N}(x)G_{N,1}(x)}{x}\sign(x)\sq{\pi |Nx|}J_{\alpha+1/2}(N|x|)dx+\]\[\int \frac{\xi_{N}(x)G_{N,2}(x)}{x}\sq{\pi |Nx|}J_{\alpha-1/2}(N|x|)dx=I+II,\label{mer-irrev-no}\]
		where $G_{N,i}$ is defined as in the proof of Lemma \ref{single-integral-lemma-l-q}, and $\xi_{N}(x)=\chi_{N,i,i}(f_{\lam}^{-1}(x))x$. By Lemma \ref{SingleBesselLemma} we have that
		\[I=N^{-1}2\pi^{1/2}D(\alpha+1/2,-1/2)G_{N,1}(0)\xi_{N}(0)+E_N,\]
		where there is $C$ such that
		\[|E_N|\le \frac{C}{N^2}\sum_{l=0}^{2}\int|\chi_{N,i}^{(l)}(x)|dx\le C^2N^{\gamma-2}.\]
		Thus $I=O(N^{-1})$. Proceeding similarly, one obtains that $II=O(N^{-1})$.
	\end{proof}
	
	As in (\ref{first-order-form}), we see from the asymptotics provided in Proposition \ref{Merging Error Bessel} that for $i\in \{1,2\}$, there are constants $C_{k,N}$, for $k\in \{1,2,3,4\}$, such that
	\[c_{i,N}\ell_{i}(x)=C_{1,N}\frac{p_{N}(x)}{x-\lam_i}+C_{2,N}\frac{p_{N-1}(x)}{x-\lam_i} \label{mer-first-order-form-1},\]
	\[c_{i,N}^{-1}q_{i}(x)=C_{3,N}\frac{p_{N}(x)}{x-\lam_i}+C_{4,N}\frac{p_{N-1}(x)}{x-\lam_i}, \label{mer-first-order-form-2}\]
	and such that $C_{k,N}=O(1)$ for $k\in \{1,2,3,4\}$.
	
	We recall the functions $\hat{\chi}_{N,i}$ introduced in the proof of Lemma \ref{mer-both Bessel Lemma}, which are similar to $\chi_{N,i}$, but defined so that $\supp(\hat{\chi}_{N,i})\cap \supp(\chi_{N,0})=\varnothing$. Proceeding as in the previous section, we see that
	\[\int J_N^{-1}\ell_l(x) p_{N-1}(x)w_N(x)dx=\int (J_N^{-1}\chi_{N,l}\ell_l)(x) \hat{\chi}_{N,l}(x)p_{N-1}(x)w_N(x)dx+c_{l,N}^{-1}O(N^{-\min(1,(1-\gamma)-1/2)}),\]
	\[\int J_N^{-1}p_{N}(x)q_k(x) w_N(x)dx=\int (J_N^{-1}\chi_{N,l}p_{N})(x)\hat{\chi}_{N,k}(x)q_k(x) w_N(x)dx+c_{k,N}O(N^{-1/2}),\]\[
	\int J_N^{-1}\ell_l(x)q_k(x)w_N(x)dx=\delta_{k,l}\int J_N^{-1}(\chi_{N,l}\ell_{l})(x)\hat{\chi}_{N,k}(x)q_k(x)w_N(x)dx+c_{k,N}c^{-1}_{l,N}O(N^{-\min(1/2,2(1-\gamma))}).\]
	We observe that $c_{k,N}c^{-1}_{l,N}=O(1)$. By these formulas, and (\ref{mer-first-order-form-1}-\ref{mer-first-order-form-2}), we see that to establish the remaining cases of Proposition \ref{merging-W-Integral-Proposition}, it suffices to show the following.
	
	\begin{lem}
		\label{mer-Over Integral Double Integral}
		Let $\phi$ and $\varphi$ be smooth. Then for any choice of $i=1,2$, and any $n$ and $m$, we have that
		\[\int \int \phi(x)\varphi(y)\chi_{N,i,i}(x)\chi_{N,i}(y) p_{N-n}(x)p_{N-m}(y)\sign(x-y)w_N(x)^{1/2}w_N(y)^{1/2}dxdy=O(N^{-1}\log(N)).\label{mer-Integral-1-Over}\]
		Moreover, if $|n-m|\le 1$, then additionally
		\[\int \int \phi(x)\varphi(y)\chi_{N,i,i}(x)\chi_{N,i,i}(y) p_{N-n}(x)p_{N-m}(y)\sign(x-y)w_N(x)^{1/2}w_N(y)^{1/2}dxdy=O(N^{\gamma-1}\log(N)). \label{mer-Integral-1-Over-2}\]
	\end{lem}
	\begin{proof}
		We begin with the proof of (\ref{mer-Integral-1-Over}). We may proceed identically to the proof of (\ref{Integral-1-Over}) to write the left-hand side of (\ref{mer-Integral-1-Over}) as the sum $I+II$ where
		\[I=\frac{1}{\pi}\sum_{k,l=1,2}\int [\cal{J}_{\alpha}(g_{N,m}(x))]_lI_k(g_{N,n}(x))\frac{h_{N,k}(x)l_{N,l}(x)g_{N,n}(x)\chi_{N,i}(x)^2}{g_{N,n}'(x)(x-\lam)}dx,\]
		\[II=-\frac{1}{2\pi}\sum_{k,l=1,2}\int \int [\cal{J}_{\alpha}(g_{N,m}(x))]_lI_k(g_{N,n}(y))\chi_{N,i}(y)l_{N,l}(y)\frac{d}{dx}(\frac{\chi_{N,i}(x)h_{N,k}(x)g_{N,n}(x)}{g_{N,n}'(x)(x-\lam)})\sign(x-y)dxdy,\]
		with notation as in the proof of Lemma \ref{Over Integral Double Integral}. As $|\chi_{N,j}(x)|$ is bounded, and as the integrand of $I$ remains of order $O(\min(N|x|^{-1},1))$, we see that $I=O(N^{-1}\log(N))$, as before. In addition, we see there is $C$ such that
		\[|II|\le \frac{1}{2\pi}\sum_{k,l=1,2}\int  |\chi_{N,i}(y)I_k(g_{N,n}(y))l_{N,l}(y)|dy\int |[\cal{J}_{\alpha}(g_{N,m}(x))]_l\frac{d}{dx}(\frac{\chi_{N,i}(x)h_{N,k}(x)g_{N,n}(x)}{g_{N,n}'(x)(x-\lam)})|dx \le \]
		\[\frac{C}{N}\int \min(|y|^{-1},N)dy\int (|\chi_{N,i}(x)|+|\chi_{N,i}'(x)|)dx=O(N^{-1}\log(N)).\]
		This completes the proof of (\ref{mer-Integral-1-Over}).
		
		We now discuss the proof of (\ref{mer-Integral-1-Over-2}). The proof of (\ref{mer-Integral-1-Over-2}) may also be reduced, by modifying the proof of (\ref{Integral-1-Over-2}) as above, to showing that for $k,l=1,2$,
		\[\int\int \frac{\chi_{N,i}(x-\lam)\chi_{N,i}(y-\lam)}{xy}[\cal{J}_{\alpha}(Nx)]_k[\cal{J}_{\alpha}(Ny)]_l\sign(x-y)dxdy=O(N^{\gamma-1}\log(N)).\label{mer-Weird-Integral-Over-1}\]
		On the other hand, we may rescale the left-hand side of  (\ref{mer-Weird-Integral-Over-1}) to become
		\[\int\int \frac{\chi(x)\chi(y)}{xy}[\cal{J}_{\alpha}(N\delta_Nx)]_k[\cal{J}_{\alpha}(N\delta_Ny)]_l\sign(x-y)dxdy.\]
		This coincides with a rescaled case of (\ref{Weird-Integral-Over-1}), which we have already shown to be of order \[O(\delta_N^{-1} N^{-1}\log(\delta_N^{-1} N^{-1})),\] and thus is further of order $O(N^{\gamma-1}\log(N))$. This completes the proof of (\ref{mer-Weird-Integral-Over-1}).
	\end{proof}

\end{document}